\documentclass[12pt]{article}
\usepackage[utf8]{inputenc}
\usepackage[margin=1in]{geometry}
\usepackage{enumerate}
\usepackage{graphicx,float}
\usepackage{amssymb,MnSymbol,dsfont}
\usepackage{doi}
\usepackage{color}
\usepackage{hyperref}
\usepackage{cleveref}
\usepackage{algorithm}
\usepackage{algpseudocode}
\usepackage{subcaption}
\usepackage[numbers]{natbib}
\usepackage{booktabs}

\newcommand{\RR}{\mathds{R}}
\newcommand{\E}{\mathbb{E}}
\newcommand{\N}{\mathcal{N}}

\newcommand{\Mc}{\mathcal{M}}
\newcommand{\nhat}{\hat{n}}

\newcommand{\Var}{\mathrm{Var}}
\newcommand{\myrightarrow}[1]{\xrightarrow{\makebox[1em][c]{$\scriptstyle#1$}}}
\newcommand{\Rcancel}{R}
\newcommand{\Rgauss}{R_{\mathrm{Gauss}}}
\newcommand{\Rprop}{R_{\mathrm{prop}}}

\newcommand{\hess}{\mathrm{Hess}}

\newtheorem{thm}{Theorem}
\newtheorem{lem}{Lemma}

\newtheorem{rmk}{Remark}
\newtheorem{prop}{Proposition}
\newtheorem{cor}{Corollary}

\newenvironment{pf}{\noindent\textbf{Proof.}}{\hfill$\square$}

\DeclareUnicodeCharacter{2212}{\ensuremath{-}}

\algrenewcommand\algorithmiccomment[1]{\hfill // #1}

\title{Optimal scaling and curvature correction for constrained MCMC on spheres
\thanks{For the purpose of open access, the author has applied a Creative Commons Attribution (CC BY) licence to any Author Accepted Manuscript version arising from this submission.}
}

\date{}

\author{Nayef Shkeir\\
\small Mathematics Institute, University of Warwick, Coventry, CV4 7AL, United Kingdom\\
\and Tobias Grafke\\
\small Mathematics Institute, University of Warwick, Coventry, CV4 7AL, United Kingdom}

\begin{document}

\maketitle

\begin{abstract}
Sampling Gaussian measures restricted to nonlinear manifolds in high dimensions requires a proposal kernel that respects both geometric constraints and dimensional scaling requirements. We introduce tangent preconditioned Crank-Nicolson (pCN), which combines a manifold projection with a cancellation of the ambient Gaussian contribution, leaving only a geometric residual. Considering a shifted sphere with $\sqrt d$ scaling of radius and centre as a tractable benchmark embedded manifold, we prove that uncorrected tangent pCN results in a proposal noise scale of order $d^{-1/6}$ (compared to $d^{-1/2}$ for a tangent random-walk). We establish asymptotic optimal tuning for the geodesic expected squared jump distance which is maximised at an acceptance probability of 0.574 for the family of drift-corrected tangent proposals containing tangent pCN and tangent MALA. We also prove a joint diffusion limit for the parallel and orthogonal coordinates to the sphere's centre vector of tangent pCN. The orthogonal diffusion speed matches the limiting geodesic expected squared jump distance and is maximised at acceptance probability 0.574, whereas the parallel speed is maximised at 0.712, demonstrating that optimal tuning depends on the observable. We then identify an intrinsic precision correction characterised by potential Hessian and Ricci curvature that yields a proposal kernel with dimension-independent fixed step size limit. We discuss a generalisation to arbitrary hypersurfaces and present nondegenerate scaling of tangent pCN applied to conditioned diffusions under resolution refinement.
\end{abstract}
\section{Introduction}

Nonlinear constraints arise naturally when scientific models are subject to physical constraints or observations: examples range from classical molecular dynamics~\cite{frenkel-smit:2002, lelievre-rousset-stoltz:2012}, over inverse problems~\cite{stuart:2010, beskos-girolami-lan-etal:2017} to modern tools such as generative AI~\cite{ho-jain-abbeel:2020, song-sohl-dickstein-kingma-etal:2021}. In the context of sampling from a Gaussian model, such constraints confine the ambient distribution onto a lower-dimensional submanifold, and a sampler needs to accommodate both the manifold geometry and the target measure. These goals become particularly hard to satisfy as the dimension of the problem grows.

For unconstrained problems, high-dimensional samplers are constructed by preserving a Gaussian reference measure, with preconditioned Crank-Nicolson (pCN)~\cite{cotter-roberts-stuart-etal:2013} as prime example. For target measures with a density relative to that reference, the Gaussian contribution fully cancels from the Metropolis-Hastings adjustment, leading to dimension-independent scaling of the resulting algorithm.

A nonlinear constraint breaks this cancellation, as ambient pCN proposals leave the submanifold, and enforcing the constraint modifies the finely-adjusted proposal law, leading to geometric residual terms. In this paper, we address the question of how much of the Gaussian cancellation can be retained, and how the geometric residual determines the scaling of the required step size with dimension. There are many approaches to sampling from manifolds~\cite{byrne-girolami:2013, diaconis-holmes-shahshahani:2013}, with initial developments often motivated by physical constraints (e.g.~molecular dynamics~~\cite{hartmann:2008}), or probabilistic and Bayesian conditioning or inference~\cite{stuart:2010}. For a generic manifold, the natural approaches are either to restrict proposal dynamics to the manifold~\cite{byrne-girolami:2013}, or to propose locally in tangent space, and then project back onto the manifold~\cite{zappa-holmes-cerfon-goodman:2018}. These strategies necessarily introduce additional structure, such as nontrivial volume forms, tilted tangent spaces and curvature. Hence, most literature discussing sampling on manifolds~\cite{girolami-calderhead:2011, lelievre-rousset-stoltz:2019} do not consider the high-dimensional limit or optimise for well-behaved dimensional scaling, and it is fair to say that dimensionally independent sampling on arbitrary manifolds is an unsolved problem. It is important to consider the contribution of some recent work on constructing dimension robust samplers on high-dimensional spheres. \citet{lie-rudolf-sprungk-etal:2023} discuss targets with a (bounded) density relative to an angular central Gaussian (ACG) measure and the dimension-independent spectral gap results require uniform bounds relative to this ACG reference that our concentrating target family does not satisfy. The geodesic slice samplers of \citet{habeck-hasenpflug-kodgirwar-etal:2025} apply to this family, but use a different proposal mechanism and their general convergence bounds depend on dimension. This work in contrast aims to study how the acceptance probability of a proposal that uses only a tangent step and a normal projection (and therefore applies to any hypersurface given some conditions) scales with dimension in the sense of the optimal scaling theory of \cite{roberts-gelman-gilks:1997,roberts-rosenthal:1998}.

To this end, we here introduce tangent pCN, combining a Gaussian tangent space proposal with normal projection and Metropolis-Hastings correction. We show its geometrically corrected cancellation properties to express the remaining Gaussian contribution through a normal term and curvature effects, which allows us to study the interaction between geometry and dimensional scaling of the acceptance probability. To make the computation explicit, we specifically look at a Gaussian measure on a high-dimensional sphere with radius and centre scaling like $\sqrt d$ (a particularly scaled version of the von Mises--Fisher density~\cite{wood:1994, mardia-jupp:2000}). This choice permits an asymptotic analysis, demonstrating the nontrivial influence of the geometry, while serving as benchmark for proposal mechanisms for arbitrary manifolds. For this problem, uncorrected tangent pCN has critical proposal step size scaling of $d^{-1/6}$, compared with $d^{-1/2}$ for tangent random walk, resulting in an algorithm that can sample in millions of dimensions. We prove a joint diffusion limit, on the same accelerated time scale, for parallel and orthogonal coordinates to the sphere's centre vector. The orthogonal diffusion speed coincides with the limiting normalised geodesic expected squared jump distance, with optimal acceptance probability $0.574$, whereas the parallel speed is maximised at $0.712$. Thus the different optimal tunings reflect different observables of the same chain. Finally, an additional modification of the pCN proposal precision, taking into account the manifold's geometry, cancels the terms growing with dimension, leaving a $O_p(1)$ Metropolis defect and nondegenerate limiting acceptance at fixed step size in the infinite dimensional limit of the sphere benchmark. Empirical tests sampling conditioned stochastic differential equations show similar nondegenerate limiting acceptance under temporal refinement which confirm that our geometric considerations permit dimension-invariant scaling beyond the spherical case.

\subsection{Sampling in flat space}
\label{sec:flat-space-prec}

Before introducing nonlinear constraints and sampling on a manifold, we first recall how in the case of Gaussian reference measure a non-vanishing acceptance probability can be maintained in the infinite-dimensional limit. As ambient reference measure, we take the standard Gaussian measure on $\RR^d$,
\begin{equation}
  \label{eq:reference-measure}
  \mu_0(dx) = (2\pi)^{-d/2}\exp\,\left(-\frac12\|x\|^2\right)\,dx = \varrho(x)\,dx\,,
\end{equation}
where $dx$ denotes Lebesgue measure on $\mathbb R^d$, and $\varrho(x)$ the density of $\mu_0$ with respect to it. Although this measure can be sampled directly, choosing it as target demonstrates the effect of the proposal kernel on the dimensional scaling of the Metropolis-Hastings acceptance rate. For this, one constructs a Markov process, with transition density $p(x, y)$, such that it is reversible with respect to $\mu_0$. Drawing from $\mu_0$ is then equivalent to sampling the Markov process. To construct $p(x,y)$, we can consider a Markov chain of our choice with transition density $q(x,y)$ which is used to generate a proposal $y$. This proposal is accepted with the acceptance probability $a(x,y)$. From this, $p(x,y)$ is constructed as
\begin{equation}
  p(x,y) = q(x,y) a(x,y) + \delta_x(y) \int (1-a(x,u)) q(x, du)\,.
\end{equation}
In words, with probability $a(x,y)$, we accept the proposal $y$, otherwise we reject, meaning we remain at $x$. In order to obtain detailed balance for $p(x,y)$, one needs
\begin{equation}
  a(x,y) = 1 \wedge\exp(\Rcancel(x,y))
\end{equation}
where
\begin{equation}
  \Rcancel(x,y) = \log \left(\frac{q(y,x)\varrho(y)}{q(x,y)\varrho(x)}\right) = \underbrace{\log\left(\frac{\varrho(y)}{\varrho(x)}\right)}_{\Rgauss} + \underbrace{\log\left(\frac{q(y,x)}{q(x,y)}\right)}_{\Rprop}\,,
\end{equation}
is the log acceptance ratio, which we also call the \emph{Metropolis defect}. For a Gaussian random walk, i.e.~$q(x,y) \propto \exp(-\tfrac12 \beta^{-2} \|x-y\|^2)$ with $\beta>0$, we have $\Rprop=0$ due to symmetry. This is the \emph{random walk Metropolis-Hasting} (RWMH) algorithm. Unfortunately, this comes with notoriously disadvantageous dimensional scaling, as is easily seen: If we draw a proposal $y = x + \beta v$, where $v\sim \mathcal N(0,I_d)$, then
\begin{equation}
  \Rcancel = \Rgauss = -\tfrac12(\|y\|^2-\|x\|^2) = -\beta\langle x,v\rangle - \tfrac12\beta^2 \|v\|^2\,.
\end{equation}
Since $\|v\|^2$ follows a $\chi^2$-distribution, in high dimensions we have $\E [\Rcancel] = -\beta^2 d / 2$, meaning that we need to scale $\beta = O(d^{-1/2})$ to keep acceptance rates in check~\cite{roberts-gelman-gilks:1997}.

This dimensional degradation for a Gaussian target is classically solved by preconditioned Crank-Nicolson, where $q(x,y)$ is instead constructed via $y = \sqrt{1-\beta^2} x + \beta v$ with $v\sim\mathcal N(0,I_d)$. With this choice, as is easily seen by inserting, $\Rprop$ exactly cancels $\Rgauss$ (for any $\beta$), so that $R=0$ for the Gaussian target, and every proposal is accepted independently of dimension.

\subsection{Sampling on Hypersurfaces}
\label{sec:sampling-hypersurfaces}

We now add to this a nonlinear constraint. Let $\mathbb R^d$ be equipped with the Euclidean inner product $\langle\cdot,\cdot\rangle$ and induced norm $\|\cdot\|$. We consider a smooth embedded hypersurface
\begin{equation}
  \Mc = \{z \in \mathbb R^d : F(z)=0\},
\end{equation}
where $F:\mathbb R^d \to \mathbb R$ is $C^2$ and $\nabla F(z)\neq 0$ for all $z\in \Mc$. Then $\Mc$ is a codimension-one manifold, and for each $z\in \Mc$ its tangent space is
\begin{equation}
  T_z\Mc = \{v\in \mathbb R^d : \langle v,\nabla F(z)\rangle = 0\}.
\end{equation}
We write
\begin{equation}
  \nhat_z := \frac{\nabla F(z)}{\|\nabla F(z)\|}
\end{equation}
for the unit normal, and
\begin{equation}
  P_z := I - \nhat_z\nhat_z^\top
\end{equation}
for the orthogonal projector onto $T_z\Mc$. Throughout, we reserve $x,y$ for points of flat ambient (or Euclidean) space, as in the preceding section, and $z,z'$ for points on the manifold $\Mc$.

The target of interest is the Gaussian measure constrained to $\Mc$, namely
\begin{equation}
  \pi(dz) \propto \exp\,\left(-\frac12\|z\|^2\right)\,\sigma_{\Mc}(dz),
  \label{eq:manifold_target}
\end{equation}
where $\sigma_{\Mc}$ is the induced surface measure on $\Mc$. At this point, it is important to distinguish the cases of constraining to $\Mc$ as opposed to conditioning on $\Mc$, the latter incurring an additional pre-exponential correction term $(\det \nabla F(z)^T\,\nabla F(z))^{-1/2} = \|\nabla F(z)\|^{-1}$ from the coarea formula \cite{diaconis-holmes-shahshahani:2013}.

In order to sample from a measure like this,~\citet{zappa-holmes-cerfon-goodman:2018} proceed as follows: Given the current state $z\in \Mc$, construct a proposal by first drawing a tangent increment $v\in T_z\Mc$ and then projecting back to the manifold. To map $z+v$ back to $\Mc$, one then solves for a scalar $\alpha:=\alpha(z,v) \in \RR$ such that
\begin{equation}
  z' = z + v + \alpha \nhat_z \in \Mc.
\end{equation}
In practice, $\alpha$ needs to be obtained by some root-finding algorithm, such as a Newton scheme, to solve $F(z + v + \alpha \nhat_z) = 0$ for $\alpha$. Once obtained, $z'$ is the new proposal point. The reverse tangent increment at this point is given by
\begin{equation}
  v' := P_{z'}(z-z')\in T_{z'}\Mc.
\end{equation}
In order to ensure reversibility, one further needs to perform the whole procedure reversed, by projecting $z' + v'$ onto the manifold and observing whether $z$ is recovered. If not, the proposal is rejected. Further, as remarked in~\cite{zappa-holmes-cerfon-goodman:2018}, the projection step onto the manifold does not incur an additional Jacobian ratio, as the forward and backward projection Jacobian exactly cancel.

In the original variant, the algorithm just described is equipped with a proposal corresponding to a random walk in tangent space, $v\sim \mathcal N(0,\beta^2 P_z)$, with the same dimensional deterioration as flat-space RWMH. The central question of this paper is how to construct proposals such that $\Rcancel$ behaves well in high dimension, and whether its leading terms can be made to cancel similar to pCN by matching the proposal more closely to the local geometry of the manifold. We investigate these questions mainly on the shifted sphere, where all geometric quantities can be written explicitly, and rigorous proofs can be obtained.

\subsection{Shifted sphere geometry as benchmark}
\label{sec:shift-sphere-geom}

In order to make precise statements about the scaling behaviour of the algorithm, we now consider a specifically scaled and shifted sphere as a benchmark case, which serves as an exactly solvable model for understanding the Metropolis defect $\Rcancel$. The advantage of this case is that all geometric quantities, the projection step, and the reverse tangent increment can be written in closed form. The shifted sphere serves for us as the canonical solvable case for constrained sampling, much like product targets did for unconstrained MCMC \cite{roberts-gelman-gilks:1997,roberts-rosenthal:1998, beskos-pillai-roberts-etal:2013}.

Let the sphere centre $b\in \mathbb R^d$ satisfy $ \|b\| = \delta \sqrt d$, and let $r>0$. We consider the manifold
\begin{equation}
  \Mc = S^{d-1}(b,r) := \{z\in \mathbb R^d : \|z-b\|=r\}.
\end{equation}
Equivalently,
\begin{equation}
  \Mc = \{z\in \mathbb R^d : F(z)=0\},
  \qquad
  F(z):=\|z-b\|^2-r^2.
\end{equation}
The case $b=0$ is degenerate for our purposes since then $\|z\|^2$ is constant on $\Mc$, so the constrained Gaussian target becomes the uniform surface measure. The shift $b\neq 0$ is therefore what makes the problem nontrivial.

For $z\in \Mc$, we have $\nabla F(z) = 2(z-b)$, $\nabla^2 F(z) = 2I$ and $\|\nabla F(z)\| = 2r$, so the unit normal and tangent projector of Section~\ref{sec:sampling-hypersurfaces} specialise to

\begin{equation}
\nhat:=\nhat_z=\frac{z-b}{r}, \qquad P_z = I-\nhat\nhat^\top,
\end{equation}
(and $\nhat' = \nhat_{z'}$ at a proposed point $z'$), where we drop the subscript on $\nhat$ whenever the base point is clear from context. Then $z=b+r\nhat$ and, since $P_z\nhat=0$, we also have $P_z(z)=P_z(b+r\nhat)=P_z(b)$. For the constrained Gaussian target $\pi$ of Section~\ref{sec:sampling-hypersurfaces}, the squared norm in the exponent expands as
\begin{equation}
  \|z\|^2 = \|b\|^2+r^2+2r\langle b,\nhat\rangle,
  \label{eq:sphere_norm_expansion}
\end{equation}
so the target density varies on the sphere only through the scalar $\langle b,\nhat\rangle$, the remaining terms being independent of the location $z\in\Mc$ on the sphere. In particular, and as intuitive, the Gaussian tilt favours those points whose outward normal points opposite to $b$, and thus are closer to the origin.

We can see from the expansion \eqref{eq:sphere_norm_expansion} that the target $\pi$ does not determine the ambient Gaussian measure $\mu_0$. For every $\theta>0$, the ambient Gaussian
\begin{equation}
    \mu_{\theta} = \N((1-\theta^{-1})b,\theta^{-1}I_d)
\label{eq:h_reference_measure}
\end{equation}
has the same normalised surface restriction to $\Mc$ as the standard Gaussian. Indeed, on $\Mc$, $-\frac \theta2\|z-(1-\theta^{-1})b\|^2=-\frac12\|z\|^2+\mathrm{const}$. So a proposal that is built from a reference Gaussian (like pCN) involves a choice of $\theta$. Section~\ref{sec:manifold-pcn} makes the choice of $\theta=1$ and Section~\ref{sec:geom-aware-corr} relates this choice to dimensional deterioration that is studied in Section~\ref{sec:optim-scal-shift} and identifies a form of $\theta$ for which acceptance does not degrade with dimension.

\begin{rmk}[Notation]
    We use $\myrightarrow{p}$ and $\myrightarrow{d}$ for convergence in probability and in distribution respectively. We will throughout use standard Mann-Wald notation to describe orders of magnitude of quantities. Given two sequences of positive constants $(a_n), (b_n)$, we write $a_n = o(b_n)$ when $\lim_{n \to \infty} (a_n/b_n) = 0$, and $a_n = O(b_n)$ when $\limsup_{n \to \infty} (a_n/b_n) = K < \infty$. For a sequence of random variables $\{X_n\}$ and a sequence of positive constants $\{a_n\}$, we write $X_n = o_p(a_n)$ if $X_n / a_n \myrightarrow{p} 0$ as $n \to \infty$; and $X_n = O_p(a_n)$ when $X_n / a_n$ is bounded in probability as $n \to \infty$ i.e. $\lim_{C \to \infty}\limsup_{n\to\infty}\mathbb{P}(|X_n|\ge Ca_n)=0$. In particular, for a constant $c$, $X_n = c + o_p(1)$ means that $X_n \myrightarrow{p} c$.
\end{rmk}

When considering the dimensional scaling of an algorithm, we need to be explicit about the dependence of the manifold $\Mc$ on dimension. For the shifted sphere, we prescribe the scaling $\|b\|=\delta\sqrt d$ and $r=\kappa\sqrt d$ with $\delta,\kappa=O(1)$ as $d\to\infty$. This is the natural nondegenerate regime for the ambient Gaussian reference. Indeed, if $\xi\sim \mathcal N(0,I_d)$, then $\|\xi\|=O_p(\sqrt d)$, so taking $r\asymp \sqrt d$ places the shifted sphere on the same radial scale as the Gaussian shell. It also keeps the small-step condition $\|v\|/r\ll 1$ compatible with the typical proposal size $\|v\|\sim \beta\sqrt d$.

\section{Proposed algorithm}
\label{sec:proposed-algorithm}

\subsection{Tangent preconditioned Crank-Nicolson for sampling on high-dimensional manifolds}
\label{sec:manifold-pcn}

A major contribution of this work is to introduce a family of algorithms that combine the dimensional scaling advantages of pCN and others in its family with the manifold sampling capabilities of the~\citet{zappa-holmes-cerfon-goodman:2018} algorithm. If not additionally correcting for geometric terms (as we will discuss later in Section~\ref{sec:geom-aware-corr}), the resulting algorithm does not scale independently of dimension when applied to the sphere benchmark. On the flip-side, it is simple to implement and nevertheless allows sampling in very high dimensions efficiently in practice. For $\beta \in (0,1)$, set
\begin{equation}
  \rho := \sqrt{1-\beta^2}\,, \qquad \gamma := \rho - 1\,.
  \label{eq:rho_def}
\end{equation}
We use this notation throughout the remainder of the paper.

The only change relative to the scheme of Section~\ref{sec:sampling-hypersurfaces} is the law of the tangent increment: for a point $z\in\Mc$, we propose $v\in T_z\Mc$ via
\begin{equation}
  v \mid z \sim \mathcal N(m_z,\beta^2 P_z),
  \qquad
  m_z := \gamma\,P_z(z)\,.
\end{equation}
Equivalently, $v = \gamma\,P_z(z) + \beta P_z\xi$ with $\xi \sim \mathcal N(0,I_d)$. This exactly corresponds to a pCN proposal, but with mean and displacement projected into tangent space, and we hence refer to this variant as \emph{tangent pCN}. All remaining steps are those of Section~\ref{sec:sampling-hypersurfaces}: the proposal point $z' = z + v + \alpha\nhat \in \Mc$ is obtained by solving for the projection scalar $\alpha = \alpha(z,v)$, the reverse increment is $v' = P_{z'}(z-z')$, and the reversibility check rejects unless backprojecting $z' + v'$ recovers $z$; in particular the cancellation of projection Jacobians of \citet{zappa-holmes-cerfon-goodman:2018} applies verbatim.

This procedure defines the proposal kernel $q(z,z')$, and the proposed $z'\in\Mc$ is accepted with log acceptance ratio
\begin{equation}
  \Rcancel = -\frac12(\|z'\|^2-\|z\|^2) + \frac{1}{2\beta^2} \left( \|v-m_z\|^2-\|v'-m_{z'}\|^2 \right),
  \label{eq:Rcancel_velocity_form}
\end{equation}
i.e. whenever $\log u < \Rcancel$ for $u\sim U[0,1]$.

The full cancellation of the Gaussian target known from flat-space pCN does not translate to the manifold setting. In fact, using $v-m_z = P_z(z'-\rho z)$, we obtain from \eqref{eq:Rcancel_velocity_form} that
\begin{equation}
    \Rcancel = \frac1{2\beta^2} \left( \langle \nhat', z-\rho z'\rangle^2 - \langle \nhat, z' - \rho z\rangle^2\right) = \frac1{2\beta^2} \left( r_b^2 - r_f^2 \right)
    \label{eq:Rcancel_manifold_residuals}
\end{equation}
with $r_f = \langle\nhat, z'-\rho z\rangle$ being the forward residual after Gaussian cancellation, and $r_b$ correspondingly the backward residual.

\begin{algorithm}
  \caption{tangent pCN proposal step}
  \label{alg:tangent-pCN}
  \begin{algorithmic}[1]
    \Require Current state $z \in \Mc$ \State Sample $\xi \sim \mathcal N(0,I)$ \State $m_z \gets \gamma P_z z$ \State $v \gets m_z + \beta P_z\xi$ \State $z' \gets \Call{ProjectM}{z, v, \nhat}$ \If{projection failed} \State \Return $z$ \Comment{reject} \EndIf \State $v' \gets P_{z'}(z-z')$ \State $m_{z'} \gets \gamma P_{z'} z'$ \State Sample $u\sim U[0,1]$ \If{$\log u < \Rcancel$ \textbf{and} $\|z-\Call{ProjectM}{z',v', \nhat'}\|<\text{thresh}$} \State \Return $z'$ \Comment{accept} \Else \State \Return $z$ \Comment{reject} \EndIf

    \vspace{1em}
    \Function{ProjectM}{$z$, $v$, $\nhat$} \State $\alpha \gets \textsc{RootFind}(\alpha\mapsto F(z + v + \alpha \nhat))$ \State \Return $z + v + \alpha \nhat$ \EndFunction
  \end{algorithmic}
\end{algorithm}

This whole scheme is summarised as Algorithm~\ref{alg:tangent-pCN}. Here, \textsc{RootFind}($\alpha \mapsto F(z+ v+ \alpha\nhat)$) is a placeholder for any root-finding algorithm, such as a Newton solver, yielding an $\alpha\in\RR$ with $F(z+ v+ \alpha\nhat)=0$. The function \textsc{ProjectM} implements the projection step onto the manifold, given a point $z\in\Mc$ and a tangent vector $v\in T_z\Mc$. The reversibility check is implemented as requiring that $\|z-\Call{ProjectM}{z',v', \nhat'}\|$ is below a chosen threshold, which implies that the backward step from $z'$ given tangent space increment $v'$ leads back to $z$ as desired. Whenever either projection or the reverse-projection check fails, the proposal is rejected. We use the convention that its acceptance probability is zero and $\Rcancel:=-\infty$ on that event.

Note that on the sphere geometry, many steps are significantly simpler. In particular, we have the following explicit computations:
\begin{prop}[Closed-form projection on the sphere]
\label{prop:sphere_backprojection}
Consider $\Mc = S^{d-1}(b,r)$ the sphere geometry defined in Section~\ref{sec:shift-sphere-geom}. Let $z \in \Mc$ and $v \in T_z\Mc$ satisfy $\|v\|<r$. Then:
\begin{enumerate}
\item[(i)] The projection scalar is $\alpha = -r + \sqrt{r^2 - \|v\|^2}$, and the proposed point is $z' = b + v + \sqrt{r^2 - \|v\|^2}\,\nhat$.

\item[(ii)] The new unit normal is $\nhat' = (z'-b)/r = (1 + \alpha/r)\,\nhat + v/r$.

\item[(iii)] The reverse tangent increment is $v' = -v - \alpha(\nhat + \nhat')$, and $\|v'\| = \|v\|$.
\end{enumerate}
\end{prop}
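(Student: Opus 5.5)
The plan is a direct computation in the orthonormal decomposition $\RR^d = T_z\Mc \oplus \RR\nhat$, using $z = b + r\nhat$, $\langle v,\nhat\rangle = 0$ and $\|\nhat\|=1$.

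For (i), substitute $z' = z + v + \alpha\nhat = b + v + (r+\alpha)\nhat$ into $F(z') = \|z'-b\|^2 - r^2 = 0$. By orthogonality of $v$ and $\nhat$ this becomes $\|v\|^2 + (r+\alpha)^2 = r^2$, a quadratic in $\alpha$ with roots $\alpha = -r \pm\sqrt{r^2-\|v\|^2}$. These are real because $\|v\|<r$. We select the $+$ root, which is the one the algorithm is meant to find (e.g.\ by Newton started at $\alpha=0$). It is the unique root that tends to $0$ as $v\to 0$, so it is the small projection onto the same hemisphere. Substituting this root gives $z' = b + v + \sqrt{r^2-\|v\|^2}\,\nhat$. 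The only point needing care is this choice of root, and I would state it explicitly as the convention for the root-finder.

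For (ii), the definition of the unit normal on the sphere gives $\nhat' = (z'-b)/r$. Inserting the expression from (i) and writing $\sqrt{r^2-\|v\|^2} = r + \alpha$ yields $\nhat' = (1+\alpha/r)\nhat + v/r$.

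For (iii), start from $v' = P_{z'}(z-z') = (z-z') - \langle \nhat', z-z'\rangle\nhat'$, with $z - z' = -v - \alpha\nhat$. Compute the inner product using (ii) and orthogonality:
\begin{equation*}
\langle \nhat', z-z'\rangle = -\frac{\|v\|^2}{r} - \alpha\Bigl(1+\frac{\alpha}{r}\Bigr) = -\frac{\|v\|^2 + \alpha r + \alpha^2 + \alpha r}{r}.
\end{equation*}
The relation $\|v\|^2 + (r+\alpha)^2 = r^2$ from (i) gives $\|v\|^2 + \alpha^2 + 2\alpha r = 0$, so this inner product vanishes. The clean geometric reason is that $z$ and $z'$ lie on the same sphere, hence $\|z-b\|^2 = \|z'-b\|^2$, which gives
\begin{equation*}
\langle z - z', \nhat\rangle = -\tfrac{1}{2r}\|z-z'\|^2 = -\langle z-z', \nhat'\rangle.
\end{equation*}
Using that identity, the direct computation becomes $\langle\nhat', z-z'\rangle = \tfrac1{2r}\|z-z'\|^2 = \tfrac1{2r}(\|v\|^2+\alpha^2) = -\alpha$. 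This is not zero, so the earlier cancellation was an algebra slip to be checked against the identity. I would redo the expansion carefully: $\langle \nhat', -v\rangle = -\|v\|^2/r$ and $\langle\nhat',-\alpha\nhat\rangle = -\alpha(1+\alpha/r)$, whose sum is $-(\|v\|^2+\alpha r+\alpha^2)/r = -(-\alpha r)/r = \alpha$. Hence $v' = -v - \alpha\nhat - \alpha\nhat' = -v - \alpha(\nhat+\nhat')$. The expected main obstacle is exactly this sign bookkeeping, and the identity $\|v\|^2+\alpha^2 = -2\alpha r$ is the tool that settles it.

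For the norm, the same identity gives $\|v'\|^2 = \|z-z'\|^2 - \alpha^2 = \|v\|^2 + \alpha^2 - \alpha^2 = \|v\|^2$, using Pythagoras for the projection $P_{z'}$.
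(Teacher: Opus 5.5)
Your approach is the paper's: expand $\|z'-b\|^2=\|v\|^2+(r+\alpha)^2=r^2$ using $v\perp\nhat$, keep the root with $\alpha\to0$ as $v\to0$, and compute $v'=P_{z'}(z-z')$ directly. Your final values $\langle\nhat',z-z'\rangle=\alpha$, $v'=-v-\alpha(\nhat+\nhat')$ and $\|v'\|^2=\|v\|^2+\alpha^2-\alpha^2=\|v\|^2$ are correct.

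The write-up still contains two false intermediate claims, and you should delete or correct them. First, the initial expansion with the spurious extra $\alpha r$ does not vanish. Second, your ``clean geometric'' identity has its signs reversed. Expanding $r^2=\|(z-b)-(z-z')\|^2$ gives $\langle z-z',\nhat\rangle=+\frac{1}{2r}\|z-z'\|^2=-\alpha$. Expanding $r^2=\|(z'-b)+(z-z')\|^2$ gives $\langle z-z',\nhat'\rangle=-\frac{1}{2r}\|z-z'\|^2=\alpha$. As you wrote it, the identity yields $-\alpha$, which contradicts your careful expansion and would give the wrong increment $-v-\alpha\nhat+\alpha\nhat'$. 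With the signs corrected, the geometric argument and the direct expansion agree, and the proof is complete.
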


\begin{pf}
All identities follow by direct calculation from $\|z-b\| = r$, $v \perp \nhat$, and $z' = z + v + \alpha\nhat \in \Mc$. Note that we select the plus root of the projection scalar characterised by $\alpha\to0$ as $v\to0$.
\end{pf}

Given these simplifications, on the sphere the following steps simplify: Given a $v\in T_z\Mc$, the new proposal point $z'$ is available explicitly without invoking any root finder. Normal vectors can be computed explicitly at $z$ and $z'$ without the need to compute gradients. Further, the reverse proposal check is unnecessary, since the geometry is symmetric under exchange of $(z,v)\leftrightarrow (z',v')$, and the reverse proposal always agrees with the original state. Lastly, the coarea term disappears, as the (unnormalised) normal vector has the same length at all points on the sphere, so we do not need to specify in~\eqref{eq:manifold_target} whether we are constraining or conditioning the measure.

\subsection{Exact Metropolis Defect on the Shifted Sphere}
\label{sec:exact-metr-defect}

The proposals we consider below differ from tangent pCN through the drift and the noise scale of the tangent increment only. We therefore introduce the general Gaussian tangent proposal
\begin{equation}
v\mid z\sim\N\bigl(c\,P_z(b),\,\sigma^2P_z\bigr), \qquad c\in\RR,\ \sigma>0\,.
\label{eq:general_proposal}
\end{equation}
Tangent pCN is the case $c=\gamma$, $\sigma=\beta$. We now derive a closed-form expression for $\Rcancel = \Rgauss + \Rprop$ on the sphere on a successful projection, that is, on $\{\|v\|<r\}$ with the choice of $\alpha$ in Proposition~\ref{prop:sphere_backprojection}. Throughout, on $\{\|v\|\ge r\}$ we reject and retain the convention $\Rcancel=-\infty$. We first define the scalar
\begin{equation}
  g := \langle v,b\rangle + \alpha\langle \nhat,b\rangle.
  \label{eq:def_g}
\end{equation}
Since $\|z'\|^2 = \|b\|^2 + r^2 + 2r\langle \nhat',b\rangle$ and $\langle \nhat',b\rangle = \langle \nhat,b\rangle + g/r$ (by Proposition~\ref{prop:sphere_backprojection}(ii)), the target contribution is
\begin{equation}
  \Rgauss
  = \tfrac12\bigl(\|z\|^2 - \|z'\|^2\bigr)
  = -g.
  \label{eq:Rgauss_sphere}
\end{equation}
The proposal log-density on $T_z\Mc$ is $\log q_z(v) = -(2\sigma^2)^{-1}\|v - c P_z(b)\|^2 + C_{d,\sigma}$, where $C_{d,\sigma}$ is independent of $z$ and $v$. Since $\|v'\| = \|v\|$, the proposal log-ratio depends only on the inner products with $b$ and the projected norms
\begin{equation}
  \Rprop = \frac{c}{\sigma^2}\bigl(\langle v',b\rangle - \langle v,b\rangle\bigr)
    - \frac{c^2}{2\sigma^2} \bigl(\|P_{z'}(b)\|^2 - \|P_z(b)\|^2\bigr).
  \label{eq:Rprop_decomp}
\end{equation}

\begin{prop}[Exact defect on the sphere]
\label{prop:Rc_sphere}
Under the notation above,
\begin{align}
  \langle v',b\rangle - \langle v,b\rangle
  &= -\bigl(2 + \alpha/r\bigr)\,g,
  \label{eq:vb_diff} \\
  \|P_{z'}(b)\|^2 - \|P_z(b)\|^2
  &= -\frac{2\langle \nhat,b\rangle}{r}\,g - \frac{g^2}{r^2}.
  \label{eq:Pb_diff}
\end{align}
Consequently, the exact Metropolis defect of the proposal \eqref{eq:general_proposal} is
\begin{equation}
  \Rcancel =
  \left(-1-\frac{2c}{\sigma^2} -\frac{c}{\sigma^2}\,\frac{\alpha}{r}
    +\frac{c^2}{\sigma^2}\,\frac{\langle\nhat,b\rangle}{r} \right) g +
  \frac{c^2}{2\sigma^2}\,\frac{g^2}{r^2}.
  \label{eq:Rcancel_general}
\end{equation}
In particular, for tangent pCN
\begin{equation}
    \Rcancel=
    \left( \frac{1-\rho}{1+\rho} + \frac{\alpha}{(1+\rho)r} \right) g + 
    \frac{1-\rho}{1+\rho}\left(  \frac{\langle \nhat,b\rangle}{r}\,g + \frac{g^2}{2r^2} \right).
  \label{eq:Rcancel_exact}
\end{equation}
\end{prop}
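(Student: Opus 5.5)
The plan is to prove the two identities \eqref{eq:vb_diff} and \eqref{eq:Pb_diff} by direct computation from Proposition~\ref{prop:sphere_backprojection}. The general formula \eqref{eq:Rcancel_general} then follows by substituting them into \eqref{eq:Rprop_decomp} and adding $\Rgauss=-g$ from \eqref{eq:Rgauss_sphere}. The pCN specialisation is obtained by setting $c=\gamma=\rho-1$, $\sigma^2=\beta^2=1-\rho^2$ and simplifying the coefficients.

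For \eqref{eq:vb_diff}, use Proposition~\ref{prop:sphere_backprojection}(iii), $v'=-v-\alpha(\nhat+\nhat')$, so
\[
\langle v',b\rangle-\langle v,b\rangle=-2\langle v,b\rangle-\alpha\langle\nhat,b\rangle-\alpha\langle\nhat',b\rangle .
\]
By (ii) we have $\langle\nhat',b\rangle=\langle\nhat,b\rangle+g/r$. Substituting gives $-2(\langle v,b\rangle+\alpha\langle\nhat,b\rangle)-\alpha g/r=-(2+\alpha/r)g$.

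For \eqref{eq:Pb_diff}, write $\|P_z(b)\|^2=\|b\|^2-\langle\nhat,b\rangle^2$ and similarly at $z'$. The difference is then
\[
\langle\nhat,b\rangle^2-\bigl(\langle\nhat,b\rangle+g/r\bigr)^2=-\frac{2\langle\nhat,b\rangle}{r}g-\frac{g^2}{r^2}.
\]
Substituting into \eqref{eq:Rprop_decomp} gives
\[
\Rprop=-\frac{c}{\sigma^2}\Bigl(2+\frac{\alpha}{r}\Bigr)g+\frac{c^2}{\sigma^2}\frac{\langle\nhat,b\rangle}{r}g+\frac{c^2}{2\sigma^2}\frac{g^2}{r^2},
\]
and adding $-g$ yields \eqref{eq:Rcancel_general}.

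Before relying on \eqref{eq:Rprop_decomp}, I would check it against the definition of the proposal density. The mean is $cP_z(b)$, and the relevant reverse quantities are $v'$ and $P_{z'}(b)$. Using $\|v'\|=\|v\|$ and $\langle v,P_z b\rangle=\langle v,b\rangle$ (since $v\in T_z\Mc$), the quadratic terms in $v$ cancel, and only the cross terms and the $\|P b\|^2$ terms survive.

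For the pCN case, compute $c/\sigma^2=(\rho-1)/((1-\rho)(1+\rho))=-1/(1+\rho)$ and $c^2/\sigma^2=(1-\rho)/(1+\rho)$. The coefficient of $g$ is then
\[
-1+\frac{2}{1+\rho}+\frac{\alpha}{(1+\rho)r}+\frac{1-\rho}{1+\rho}\frac{\langle\nhat,b\rangle}{r}.
\]
Since $-1+2/(1+\rho)=(1-\rho)/(1+\rho)$, this matches \eqref{eq:Rcancel_exact}, and the $g^2$ term matches directly.

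The computation is routine throughout. The only real obstacle is bookkeeping signs, in particular confirming that the reverse density uses the reverse drift $cP_{z'}(b)$ and that the tangent-pCN drift $\gamma P_z(z)$ equals $\gamma P_z(b)$.
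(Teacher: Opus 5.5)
Your proposal is correct and follows essentially the same route as the paper: both derive \eqref{eq:vb_diff} from Proposition~\ref{prop:sphere_backprojection}(iii) together with $\langle\nhat',b\rangle=\langle\nhat,b\rangle+g/r$, and both derive \eqref{eq:Pb_diff} from $\|P_z(b)\|^2=\|b\|^2-\langle\nhat,b\rangle^2$; you then substitute into \eqref{eq:Rprop_decomp}, add $\Rgauss=-g$, and specialise to tangent pCN via $c/\sigma^2=-1/(1+\rho)$ and $c^2/\sigma^2=(1-\rho)/(1+\rho)$, exactly as the paper does. Your additional check of \eqref{eq:Rprop_decomp} from the Gaussian tangent density, using $\|v'\|=\|v\|$ and $\langle v,P_z(b)\rangle=\langle v,b\rangle$, is a welcome detail that the paper leaves implicit.
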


\begin{pf}
Identity \eqref{eq:vb_diff} follows from Proposition~\ref{prop:sphere_backprojection}(iii) and the relation $\langle \nhat + \nhat', b\rangle = 2\langle \nhat,b\rangle + g/r$. Identity \eqref{eq:Pb_diff} follows from $\|P_z(b)\|^2 = \|b\|^2 - \langle \nhat,b\rangle^2$ and $\langle \nhat',b\rangle = \langle \nhat,b\rangle + g/r$. Inserting \eqref{eq:vb_diff}--\eqref{eq:Pb_diff} into \eqref{eq:Rprop_decomp} and using \eqref{eq:Rgauss_sphere} gives \eqref{eq:Rcancel_general}. The tangent pCN specialisation can be derived with the identities $-\gamma/\beta^2 = 1/(1+\rho)$ and $\gamma^2/\beta^2 = (1-\rho)/(1+\rho)$.
\end{pf}

Formula \eqref{eq:Rcancel_exact} makes the structure of the defect transparent. The coefficient $(1-\rho)/(1+\rho) = O(\beta^2)$ is the remnant of the flat-space drift mismatch, while $\alpha/((1+\rho)r) = O_p(\beta^2)$ is a curvature-induced correction from the sphere projection step.

\subsection{Numerical Illustration of Cancellation}
\label{sec:numer-illustr-canc}

Before turning to the asymptotic analysis, we illustrate the cancellation mechanism numerically. On the shifted sphere with $\kappa=1$, $\delta=1$, and fixed $\beta=0.3$, we compare three proposals: tangent random walk (RWMH), tangent pCN (ambient reference $\theta=1$ in \eqref{eq:h_reference_measure}) and a curvature corrected tangent pCN that we will introduce in Section~\ref{sec:geom-aware-corr}. For each proposal and dimension $d$, we run a single Metropolis--Hastings chain for $102{,}000$ proposal steps, discard the first $2\times 10^3$ iterations as burn-in, and use the remaining $10^5$ post-burn-in iterations to estimate the mean values of $\Rgauss$, $\Rprop$, and $\Rcancel$, as well as the acceptance probability.

\begin{figure}[ht]
  \centering
  \includegraphics[width=\linewidth]{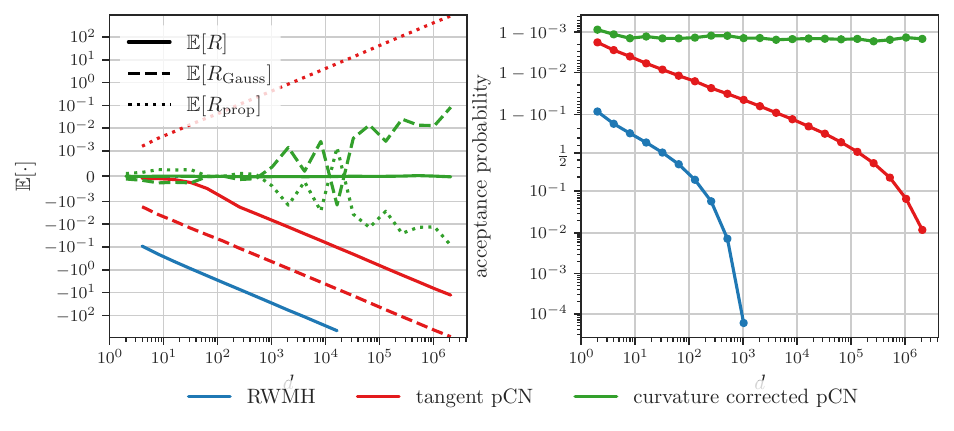}
  \caption{\small Metropolis--Hastings on a shifted sphere with radius $\sqrt d$ and centre-to-origin distance $\sqrt d$ (i.e.~$\kappa = 1, \delta = 1$), with fixed step size $\beta = 0.3$. \textbf{Left:} mean values of $R$, $\Rprop$, and $\Rgauss$ as functions of $d$. For RWMH, $\Rprop \equiv 0$ so $R = \Rgauss$. For tangent pCN, $\Rprop$ cancels most of $\Rgauss$, but a nonzero residual remains. For curvature corrected pCN, the terms in $\Rprop$ and $\Rgauss$ that grow with dimension cancel, leaving a residual independent of $d$. \textbf{Right:} acceptance probability against dimension $d$. The acceptance probability drops eventually with dimension, but tangent pCN is still usable up to a dimension of $10^5$. Curvature corrected pCN acceptance stays high for all $d$ shown.}
  \label{fig:sphere-direct-cancel}
\end{figure}

The left panel shows that $\Rgauss$ grows linearly in $d$ for both RWMH and tangent pCN, reflecting the $O(\beta^2 d)$ target contribution. For RWMH, $\Rprop \equiv 0$ (the proposal is symmetric), so $\Rcancel = \Rgauss$ and the full defect grows at the same rate. For tangent pCN, the proposal correction $\Rprop$ tracks $\Rgauss$ closely, leaving a much smaller residual $\Rcancel$. This is consistent with a partial cancellation between $\Rgauss$ and $\Rprop$; the residual $\Rcancel$ grows much more slowly than either component individually. The source of this residual is analysed in Section~\ref{sec:optim-scal-shift}. For the curvature corrected proposal, the contributions that grow with dimension cancel, leading to a $O_p(1)$ residual. Section~\ref{sec:geom-aware-corr} proves the fixed-step acceptance limit for this proposal.

The right panel of Figure~\ref{fig:sphere-direct-cancel} shows the practical consequence where, at fixed $\beta = 0.3$, the RWMH acceptance probability collapses by around $d = 10^2$, whereas pCN maintains nontrivial acceptance up to approximately $d = 10^5$ (note the unusual logit-scaling of the $y$-axis). The cancellation buys roughly three orders of magnitude in the accessible dimension range at this step size. Nevertheless, the pCN acceptance probability does eventually degrade: the geometric residual after Gaussian cancellation still grows with $d$. In contrast, we see that for curvature corrected pCN, acceptance probability stays close to $0.99$ for all $d$ tested.

While it might seem from these experiments that the curvature corrected version of tangent pCN is strictly preferable as it remains useful in the infinite-dimensional limit, we argue below that both versions need to be considered in their own right. The reason is primarily that the curvature correction, as derived in Section~\ref{sec:geom-aware-corr}, relies on the spherical geometry of our benchmark problem. While it might be possible to extend the ideas to generic shape operators of an arbitrary hypersurfaces, these will be costly to compute in a high-dimensional setting. The uncorrected variant, on the other hand, readily applies to arbitrary hypersurfaces without any further modification (even though the scaling results will not necessarily transfer). Therefore, specifically for (uncorrected) tangent pCN, the observations above raise two questions: at what rate must $\beta$ shrink in $d$ such that acceptance probability is nondegenerate and what is the optimal rate? We address both in the following section.

\section{Optimal scaling on the shifted sphere}
\label{sec:optim-scal-shift}

We now determine the scaling of the proposal parameter $\beta$ for which the acceptance probability of tangent pCN remains nondegenerate as $d\to\infty$. Throughout this section we write $r=\kappa\sqrt d$ with $\kappa>0$ and, by rotational invariance of both the ambient Gaussian measure and the shifted sphere, assume without loss of generality that $b = \delta\mathbf{1}$ with $\delta>0$. Writing $z=b+r\nhat \in S^{d-1}(b,r)$ and $\hat b:=b/\|b\| = \mathbf{1}/\sqrt{d}$, we set
\begin{equation}
  t := \langle \nhat,\hat b\rangle \in [-1,1].
  \label{eq:t_def}
\end{equation}

Using \eqref{eq:sphere_norm_expansion} together with $\|b\|=\delta\sqrt d$, $r=\kappa\sqrt d$, and $\langle\nhat,b\rangle=\delta\sqrt d\,t$, we obtain
\begin{equation}
  \|z\|^2 = (\delta^2 + \kappa^2)d + 2\delta\kappa d\,t.
\end{equation}
Therefore, the target $\pi$ \eqref{eq:manifold_target} depends on $z$ only through the scalar $t$, with one-dimensional density
\begin{equation}
  \label{eq:t_density}
  p(t) \propto (1 - t^2)^{(d-3)/2}\,e^{-\delta\kappa d\,t}.
\end{equation}
Note that when $b = 0$ (or equivalently $\delta = 0$), the tilt vanishes and $\pi$ reduces to the uniform surface measure which is a trivial target. We have that the density \eqref{eq:t_density} concentrates sharply as $d \to \infty$.

\begin{lem}[Concentration of the scalar coordinate]
\label{lem:concentration_t}
For fixed $\delta,\kappa>0$, let $t \in[-1,1]$ have density \eqref{eq:t_density}. Set
\begin{equation*}
  t^*:=\frac{1-\sqrt{1+4\delta^2\kappa^2}}{2\delta\kappa}\in(-1,0),
  \qquad
  \sigma_{t^*}^2:=\frac{(1-(t^*)^2)^2}{1+(t^*)^2}.
\end{equation*}
Then
\begin{equation}
  \label{eq:t_CLT}
  \sqrt d\,(t-t^*)\myrightarrow{d}\mathcal N(0,\sigma_{t^*}^2).
\end{equation}
Moreover, for every $k\ge1$,
\begin{equation}
  \label{eq:t_moments}
  \E_\pi\bigl[|t-t^*|^k\bigr]=O(d^{-k/2}).
\end{equation}
\end{lem}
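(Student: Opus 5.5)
The plan is a Laplace-method argument. Write the density \eqref{eq:t_density} as $p(t)\propto (1-t^2)^{-3/2}\exp(d\,\phi(t))$ with
\begin{equation*}
  \phi(t):=\tfrac12\log(1-t^2)-\delta\kappa\,t,\qquad t\in(-1,1).
\end{equation*}
First I check that $\phi$ is strictly concave, since $\phi''(t)=-(1+t^2)/(1-t^2)^2<0$. It tends to $-\infty$ at $\pm1$. Its unique critical point solves $-t/(1-t^2)=\delta\kappa$, that is, $\delta\kappa t^2-t-\delta\kappa=0$. The root lying in $(-1,1)$ is exactly $t^*$ as stated; it is negative because $\sqrt{1+4\delta^2\kappa^2}>1$, and it exceeds $-1$ because the other root has modulus greater than $1$ (the product of the roots is $-1$). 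Using the critical point equation, $-1/\phi''(t^*)=(1-(t^*)^2)^2/(1+(t^*)^2)=\sigma_{t^*}^2$, which identifies the claimed limiting variance.

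For the CLT, substitute $t=t^*+s/\sqrt d$. The density of $s$ is proportional to
\begin{equation*}
  h(t^*+s/\sqrt d)\,\exp\bigl(d[\phi(t^*+s/\sqrt d)-\phi(t^*)]\bigr),\qquad h(t):=(1-t^2)^{-3/2},
\end{equation*}
restricted to the range $s\in\sqrt d(-1-t^*,1-t^*)$. On compact sets in $s$ this converges pointwise to $h(t^*)e^{-s^2/(2\sigma_{t^*}^2)}$ by Taylor expansion, since $\phi'(t^*)=0$. To pass to convergence of the normalised densities, I need an integrable dominating function, which I build in two regions. Fix $\eta>0$ small so that $[t^*-\eta,t^*+\eta]\subset(-1,1)$ and $\phi''\le -c<0$ there. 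For $|s|\le\eta\sqrt d$, concavity gives $d[\phi-\phi(t^*)]\le -cs^2/2$, and $h$ is bounded. Outside this window, concavity gives $\phi(t)-\phi(t^*)\le -\epsilon_\eta<0$. The factor $(1-t^2)^{-3/2}$ is not integrable on its own, but $(1-t^2)^{(d-3)/2}$ is bounded by $1$ for $d\ge3$. So on the outer region the unnormalised mass is at most $2\,e^{-d\epsilon_\eta/2}e^{d\phi(t^*)}$ once $d$ is large. To see this, split $e^{d\phi}=e^{(d/2)\phi}\cdot e^{(d/2)\phi}$, bound one factor by $e^{(d/2)(\phi(t^*)-\epsilon_\eta)}$, and absorb $h$ into the other factor, which is integrable for $d\ge 4$. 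The normalising constant is of order $d^{-1/2}e^{d\phi(t^*)}$, so the outer region is exponentially negligible after rescaling. Dominated convergence then gives $L^1$ convergence of the density of $s$ to the $\mathcal N(0,\sigma_{t^*}^2)$ density, and hence \eqref{eq:t_CLT}.

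The moment bound \eqref{eq:t_moments} follows from the same two-region estimate. It amounts to showing $\E|s|^k=O(1)$. On the inner window, $|s|^k$ integrated against the Gaussian bound $e^{-cs^2/2}$ gives a constant. On the outer region, $|s|^k\le (2\sqrt d)^k$ because $|t-t^*|\le2$, and this polynomial factor is beaten by the exponentially small relative mass. I expect the main obstacle to be carrying out the outer-region control cleanly near the endpoints $t=\pm1$, where $h$ blows up. Splitting the exponent as described handles this uniformly in $d$, and the remaining steps are routine Laplace asymptotics.
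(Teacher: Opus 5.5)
Your proof is correct. It follows the same Laplace-method skeleton as the paper: pointwise limit of the rescaled density, domination, Scheff\'e, and moments from the same bounds. The difference is in how you organise the domination.

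You factor out $h(t)=(1-t^2)^{-3/2}$ so that $\phi$ has its critical point exactly at $t^*$. You then pay for this with a two-region split to handle the blow-up of $h$ at $\pm1$.

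The paper keeps everything in the exponent, $\ell_d(t)=\frac{d-3}{2}\log(1-t^2)-\delta\kappa d\,t$. Now $\ell_d'(t^*)=-3\delta\kappa\neq0$, but this linear term is only $O(|u|/\sqrt d)$ after rescaling. Meanwhile $\ell_d''\le-(d-3)$ holds on all of $(-1,1)$. Taylor's theorem then gives the single envelope $f_d(u)\le\exp(3\delta\kappa|u|/\sqrt d-u^2/4)\le Ce^{-u^2/8}$ for $d\ge6$ on the whole support. The CLT and the moment bounds follow at once, with no endpoint analysis.

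Your own observation $\phi''\le-1$ yields the same envelope. Write $h\,e^{d(\phi-\phi(t^*))}=\bigl[h\,e^{3(\phi-\phi(t^*))}\bigr]e^{(d-3)(\phi-\phi(t^*))}$. The bracket equals $(1-(t^*)^2)^{-3/2}e^{-3\delta\kappa(t-t^*)}$ and is bounded. The last factor is at most $e^{-(d-3)s^2/(2d)}$.

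This factorisation is also what your outer-region step actually needs. Finiteness of $\int h\,e^{(d/2)\phi}\,dt$ alone does not give a bound relative to $e^{(d/2)\phi(t^*)}$. What you need is that $\int h\,e^{(d/2)(\phi-\phi(t^*))}\,dt$ stays bounded uniformly in $d$. The bracket bound gives this for $d\ge6$, with some constant $C$ in place of your $2$, which is harmless.

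With that repair your route is sound. Your argument is the generic Laplace argument and would survive without global concavity. The paper instead exploits the global concavity of $\ell_d$ to reduce the proof to a few lines.
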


\begin{pf}
See Appendix~\ref{app:concentration_t_proof}
\end{pf}

Throughout this section, $\pi_d$ denotes the dimension-$d$ target measure defined in Section~\ref{sec:sampling-hypersurfaces}, and all distributional statements are under the stationary law: $z\sim\pi_d$ and, conditional on $z$, the proposal increment $v\sim q_z$. We write $\mathbb E_\pi$ for expectation with respect to this joint law, including the proposal randomness.

\subsection{Reduction to scalar sufficient statistics}
\label{sec:reduct-scal-suff}

We now show that the Metropolis defect \eqref{eq:Rcancel_exact} can be reduced to a small number of scalar sufficient statistics. Conditional on the current state $z \in \Mc$, only two proposal statistics are needed. Under stationarity, the randomness of the current state contributes a third asymptotically $O(1)$ scalar. Fix $z \in \Mc$. In the tangent space $T_z\Mc$, the distinguished direction is that of the projected drift. Define $e_1 := P_z(b) / \|P_z(b)\|$ and let $e_2,\ldots,e_{d-1}$ complete an orthonormal basis of $T_z\Mc$. Then the proposal
\begin{equation*}
v=\gamma P_z(b)+\beta P_z\xi, \qquad \xi\sim\mathcal N(0,I_d),
\end{equation*}
can be written as
\begin{equation}
  \label{eq:orthonormal-basis}
  v = (\gamma\, \|P_z(b)\|+\beta\xi_1)e_1 + \beta\sum_{i=2}^{d-1}\xi_i e_i,
\end{equation}
where $\xi_1,\ldots,\xi_{d-1}\stackrel{\mathrm{iid}}{\sim}\mathcal N(0,1)$. Hence, conditional on $z$, the proposal noise enters only through the scalar $\xi_1$ and
\begin{equation}
  W:=\sum_{i=2}^{d-1}\xi_i^2\sim\chi^2_{d-2}.
  \label{eq:W_def}
\end{equation}
Indeed,
\begin{equation}
  \label{eq:scalar_reduction}
  \|v\|^2 = (\gamma \,\|P_z(b)\|+\beta\xi_1)^2 + \beta^2 W,
  \qquad
  g = (\gamma \,\|P_z(b)\|+\beta\xi_1)\|P_z(b)\| + \alpha \langle \nhat,b\rangle.
\end{equation}
Since the exact defect \eqref{eq:Rcancel_exact} expresses $\Rcancel$ in terms of $g$, $\alpha$, $\langle \nhat,b\rangle$, and $r$, it follows that conditional on the current state, $\Rcancel$ is an explicit deterministic function of the two scalar variables $(\xi_1, W)$, with parameters $\bigl(\langle \nhat,b\rangle, \|P_z(b)\|, r, \beta\bigr)$: the $(d-1)$-dimensional proposal noise enters the defect only through these two scalars.

Under stationarity, the current state contributes one further scalar degree of freedom through $t$ as defined in \eqref{eq:t_def}. Define the centred, scaled scalar coordinate by
\begin{equation}
  \zeta := \sqrt d\,(t-t^*)
    \label{eq:zeta_def}
\end{equation}
whose Gaussian limit is given by \eqref{eq:t_CLT}. We have
\begin{align}
  \langle \nhat,b\rangle
  &= \delta\sqrt d\,t
   = \delta t^*\sqrt d + \delta \zeta,
     \label{eq:nb_expansion}\\
  \|P_z(b)\|
  &= \delta\sqrt d\,\sqrt{1-t^2}
   = \tau\sqrt d + O_p(1),
     \label{eq:P_expansion}
\end{align}
where $\tau := \delta\sqrt{1-(t^*)^2}$. Let us also define the constant
\begin{equation}
s := \tau^2 + \frac{1}{\kappa^2},
\label{eq:s_def}
\end{equation}
which will appear in every proposal variant below. Moreover $\zeta$, $\xi_1$, and $W$ are mutually independent. Consequently, under stationarity, $\Rcancel$ is a deterministic function of the three scalar random variables $(\zeta,\xi_1,W)$ and the parameters $(\delta,\kappa,\beta,d)$ alone, each of the three being $O_p(1)$ after centring and scaling. The next step is to understand the size of the scalar $g=\langle v,b\rangle+\alpha\langle \nhat,b\rangle$, which appears in the exact defect \eqref{eq:Rcancel_exact}. The point is that, although both $\langle v,b\rangle$ and $\alpha\langle \nhat,b\rangle$ contain deterministic contributions of order $\beta^2 d$, these leading terms cancel under the stationary constrained Gaussian law.

\begin{lem}[Expansion of $g$]
\label{lem:g_cancellation}
Under stationarity on the shifted sphere, for $\beta=\beta_d\to0$,
\begin{equation}
  g = \beta\tau\sqrt d\,\xi_1
    + \Lambda\beta^4 d
    + O_p(\beta)
    + O_p(\beta^2\sqrt d)
    + O_p(\beta^3\sqrt d)
    + O(\beta^6 d)\,,
  \label{eq:g_final_clean}
\end{equation}
where $\Lambda := \tfrac18\tau^2 (s-1)$.
In particular, if $\beta^3\sqrt d=O(1)$, then
\begin{equation}
  g=O_p(\beta\sqrt d).
  \label{eq:g_order_statement}
\end{equation}
\end{lem}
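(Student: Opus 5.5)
The plan is to expand both pieces of $g=\langle v,b\rangle+\alpha\langle\nhat,b\rangle$ explicitly in the scalar variables $(\zeta,\xi_1,W)$ of Section~\ref{sec:reduct-scal-suff}, and then collect terms by order. For the first piece, \eqref{eq:orthonormal-basis} together with $\langle e_1,b\rangle=\|P_z(b)\|$ and $\langle e_i,b\rangle=0$ for $i\ge2$ gives
\[
\langle v,b\rangle=\gamma\|P_z(b)\|^2+\beta\xi_1\|P_z(b)\|,
\qquad
\|P_z(b)\|^2=\delta^2 d(1-t^2).
\]
For the second piece, Proposition~\ref{prop:sphere_backprojection}(i) gives
\[
\alpha=-\frac{\|v\|^2}{r+\sqrt{r^2-\|v\|^2}}=-\frac{\|v\|^2}{2r}-\frac{\|v\|^4}{8r^3}+O\!\left(\frac{\|v\|^6}{r^5}\right).
\]
This is valid on the event $\{\|v\|^2\le r^2/2\}$. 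Since $\|v\|^2=O_p(\beta^2 d)$ and $r^2=\kappa^2 d$, that event has probability tending to one, so restricting to it costs only an $o_p(1)$ event and does not affect the $O_p$ statements. I will also use $\gamma=-\beta^2/2-\beta^4/8+O(\beta^6)$, $\langle\nhat,b\rangle=\delta t\sqrt d$, $r=\kappa\sqrt d$, and the scalar reduction \eqref{eq:scalar_reduction}, which I expand as
\[
\|v\|^2=\beta^2(d-2)+\beta^2\bigl(W-(d-2)\bigr)+\beta^2\xi_1^2+2\gamma\beta\xi_1\|P_z(b)\|+\gamma^2\|P_z(b)\|^2 .
\]
Here $W-(d-2)=O_p(\sqrt d)$ by the CLT for $\chi^2_{d-2}$.

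\emph{Order $\beta^2 d$.} The deterministic leading terms are $-\tfrac{\beta^2}{2}\delta^2 d(1-t^2)$ from $\gamma\|P_z(b)\|^2$ and $-\tfrac{\delta t}{2\kappa}\beta^2 d$ from $-\tfrac{\|v\|^2}{2r}\langle\nhat,b\rangle$. Their sum is $-\tfrac{\beta^2 d}{2\kappa}\bigl[\delta\kappa(1-t^2)\delta/\delta+\dots\bigr]$, more precisely $-\tfrac{\beta^2 d\,\delta}{2\kappa}\bigl[\delta\kappa(1-t^2)+t\bigr]$. The bracket vanishes exactly at $t=t^*$, because $t^*$ in Lemma~\ref{lem:concentration_t} is the root of $\delta\kappa t^2-t-\delta\kappa=0$; this is the key cancellation. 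Writing $t=t^*+\zeta/\sqrt d$, with $\zeta=O_p(1)$ by Lemma~\ref{lem:concentration_t}, the bracket becomes $O_p(d^{-1/2})$. The $\beta^2 d$ terms therefore collapse to $O_p(\beta^2\sqrt d)$. The $\beta^2(W-(d-2))$ fluctuation and the $-2\beta^2$ correction, after multiplication by $\langle\nhat,b\rangle/(2r)=O(1)$, are likewise $O_p(\beta^2\sqrt d)$ and $O_p(\beta^2)$.

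\emph{Order $\beta^4 d$.} I will collect three contributions, evaluated at $t^*$ and simplified with the identity $\delta t^*=-\kappa\tau^2$, which is the same stationarity relation. They are: $-\tfrac{\beta^4}{8}\tau^2 d$ from the $\beta^4$ term of $\gamma$; $+\tfrac{\tau^4}{8}\beta^4 d$ from $\gamma^2\|P_z(b)\|^2$ inside $-\tfrac{\|v\|^2}{2r}\langle\nhat,b\rangle$; and $+\tfrac{\tau^2}{8\kappa^2}\beta^4 d$ from $-\tfrac{\|v\|^4}{8r^3}\langle\nhat,b\rangle$ with $\|v\|^4\approx\beta^4d^2$. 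Their sum is $\tfrac{\tau^2}{8}(\tau^2+\kappa^{-2}-1)\beta^4 d=\Lambda\beta^4 d$. Replacing $t$ by $t^*$ in these terms costs only $O_p(\beta^4\sqrt d)$, which is absorbed into $O_p(\beta^2\sqrt d)$.

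\emph{Remaining terms.} The noise term is $\beta\xi_1\|P_z(b)\|=\beta\tau\sqrt d\,\xi_1+O_p(\beta)$ by \eqref{eq:P_expansion}. The cross term $2\gamma\beta\xi_1\|P_z(b)\|=O_p(\beta^3\sqrt d)$, multiplied by the $O(1)$ factor, gives the $O_p(\beta^3\sqrt d)$ entry. Everything left is of order $\beta^6 d$: the $O(\beta^6)$ remainder of $\gamma$ times $d$, $\gamma^2$ corrections, and the $\|v\|^6/r^5$ tail times $\sqrt d$. Cross terms such as $\beta^4\sqrt d$ are dominated by the listed orders. For the final claim, if $\beta^3\sqrt d=O(1)$ then $\beta^4 d=(\beta^3\sqrt d)\,\beta\sqrt d=O(\beta\sqrt d)$, and each other error term is $O(\beta\sqrt d)$ as well, so $g=O_p(\beta\sqrt d)$.

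The main obstacle I expect is the bookkeeping at order $\beta^4 d$. It requires tracking which products of $\gamma$, $\|v\|^2$ and $\|v\|^4$ survive, and checking that the $t$-dependence can be frozen at $t^*$ with an error small enough to be absorbed; this is where the identity $\delta t^*=-\kappa\tau^2$ has to be used consistently. A secondary point is making the Taylor remainder of $\sqrt{r^2-\|v\|^2}$ uniform. I would handle it by restricting to the high-probability event above and bounding the remainder deterministically there.
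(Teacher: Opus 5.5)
Your proposal is correct and follows essentially the same route as the paper's proof, which is Lemma~\ref{lem:g_refined}(i) with $\mu=-1/8$. You use the same exact $\beta^2 d$ cancellation through the quadratic defining $t^*$, the same three $\beta^4 d$ contributions $-\tau^2/8$, $\tau^4/8$ and $\tau^2/(8\kappa^2)$, and the same cubic Taylor remainder of $\alpha$. The only difference is the event you restrict to: the paper works on $\Omega_d=\{W+2\xi_1^2\le 2d\}$, which gives the pathwise bound $\|v\|^2\le C_0\beta^2 d$ and hence a genuinely deterministic $O(\beta^6 d)$ cubic remainder (plus exponential tail control reused later), whereas on your event $\{\|v\|^2\le r^2/2\}$ that term is only $O_p(\beta^6 d)$, which is harmless for the way the lemma is used.
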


\begin{pf}
This is Lemma~\ref{lem:g_refined}(i) in Appendix~\ref{app:Lambda_derivation} with $c_\beta=\gamma=-\tfrac12\beta^2-\tfrac18\beta^4+O(\beta^6)$, that is $\mu=-1/8$, for which $\Lambda(-1/8)=\Lambda$. If $\beta^3\sqrt d=O(1)$, then $\Lambda\beta^4 d=O(\beta\sqrt d)$ and every remainder in \eqref{eq:g_final_clean} is $O_p(\beta\sqrt d)$ which gives \eqref{eq:g_order_statement}.
\end{pf}

\subsection{Expansion at the critical scaling \texorpdfstring{$\beta=\ell d^{-1/6}$}{beta = ell d^-1/6}}
\label{sec:expans-at-crit}

We now identify the scaling of $\beta$ for which the Metropolis defect remains $O_p(1)$. By Lemma~\ref{lem:g_cancellation}, under any regime such that $\beta^3\sqrt d=O(1)$, we have $g=O_p(\beta\sqrt d)$. Splitting the exact defect \eqref{eq:Rcancel_exact} into its linear and quadratic in $g$ parts,

\begin{equation}
\Rcancel=Jg+Q, \qquad J=\frac{1-\rho}{1+\rho}+\frac{\alpha}{(1+\rho)r} +\frac{1-\rho}{1+\rho}\frac{\langle\nhat,b\rangle}{r}, \qquad
Q=\frac{1-\rho}{1+\rho}\frac{g^2}{2r^2}.
\label{eq:R_JQ_decomposition}
\end{equation}

In the small-step regime $\beta \to 0$ with $r=\kappa\sqrt d$, each of the three terms of $J$ are of order $\beta^2$: the flat-space remnant contributes $\beta^2/4$, the projection term $-\beta^2/(4\kappa^2)$ and the tilt term $-\tau^2\beta^2/4$, each up to $o_p(\beta^2)$ (Lemma~\ref{lem:coeff_general} with $c_\beta=\gamma$). Hence, $Jg=O_p(\beta^3\sqrt d)$ while $g^2/r^2=O_p(\beta^2)$ gives $Q=O_p(\beta^4)$.

A nondegenerate acceptance regime therefore requires $\beta^3\sqrt d \asymp 1$, which suggests the critical scaling $\beta=\ell\,d^{-1/6}$, with $\ell>0$ fixed. Define
\begin{equation}
C_* := \frac14\left(1 - \tau^2 - \frac{1}{\kappa^2}\right) = \frac{1-s}{4}\,,
\label{eq:Cstar_def}
\end{equation}
then summing the three contributions gives $J=C_*\beta^2+o_p(\beta^2)$, and therefore
\begin{equation}
\Rcancel = C_*\beta^2 g + o_p(1).
\label{eq:R_leading}
\end{equation}

\begin{rmk}[Exceptional parameter curve]
The definitions of $t^*$, $\tau$ and $s$ give
\begin{equation}
C_*=0\quad\Longleftrightarrow\quad s=1 \quad\Longleftrightarrow\quad \kappa^2=1+\delta^2.
\end{equation}
The $d^{-1/6}$ scaling results below assume $C_*\ne0$. On the exceptional curve, the intrinsic precision $\theta=s$ of Section~\ref{sec:geom-aware-corr} equals one, so the intrinsic proposal coincides with standard tangent pCN. Theorem~\ref{thm:fixed_step}(i) then gives a nondegenerate limiting stationary acceptance probability for every fixed $\beta\in(0,1)$.
\end{rmk}

Apart from the leading fluctuation term in \eqref{eq:g_final_clean}, the leading deterministic term $\Lambda\beta^4 d$ gives the limiting mean of $\Rcancel$. Under the critical scaling $\beta=\ell d^{-1/6}$, all other remainders in \eqref{eq:g_final_clean} are $o_p(\beta^4 d)=o_p(d^{1/3})$, so
\begin{equation}
g = \beta\tau\sqrt d\,\xi_1 + \Lambda\beta^4 d + o_p(\beta^4 d), \qquad \Lambda = -\frac{1}{2} C_*\tau^2.
\label{eq:g_critical_final}
\end{equation}

Substituting \eqref{eq:g_critical_final} into \eqref{eq:R_leading} and using $\beta=\ell d^{-1/6}$ gives
\begin{equation}
  \Rcancel = C_*\tau\,\ell^3\,\xi_1 + C_*\Lambda\,\ell^6 + o_p(1).
  \label{eq:Rcancel_limit_expansion}
\end{equation}
Note that after multiplication by the prefactor $\beta^2$, the terms $O_p(\beta)$, $O_p(\beta^2\sqrt d)$, and $O_p(\beta^3\sqrt d)$ in the expansion \eqref{eq:g_final_clean} of $g$ become $O_p(\beta^3)$, $O_p(\beta^4\sqrt d)$, and $O_p(\beta^5\sqrt d)$, respectively, all of which are $o_p(1)$ under $\beta=\ell d^{-1/6}$.

\begin{thm}[Gaussian limit]
\label{thm:gaussian_limit}
Assume $C_*\neq 0$, and let $\beta=\ell d^{-1/6}$ with $\ell>0$ fixed. Define
\begin{equation}
  \sigma_\ell := |C_*|\,\tau\,\ell^3.
\label{eq:sigmaell_def}
\end{equation}
Under the stationary proposal law,
\begin{equation}
  \Rcancel \myrightarrow{d} \mathcal N\,\left(-\frac12\sigma_\ell^2,\,\sigma_\ell^2\right).
\end{equation}
\end{thm}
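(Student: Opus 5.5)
The plan is to start from the expansion \eqref{eq:Rcancel_limit_expansion}, $\Rcancel = C_*\tau\ell^3\xi_1 + C_*\Lambda\ell^6 + o_p(1)$, and apply Slutsky's lemma. Under the stationary proposal law, $\xi_1\sim\mathcal N(0,1)$ exactly for every $d$, because it is a coordinate of the isotropic tangent noise in the orthonormal basis \eqref{eq:orthonormal-basis}. Hence $C_*\tau\ell^3\xi_1\sim\mathcal N(0,C_*^2\tau^2\ell^6)=\mathcal N(0,\sigma_\ell^2)$. Adding the deterministic constant and an $o_p(1)$ remainder then gives
\begin{equation*}
\Rcancel\myrightarrow{d}\mathcal N\bigl(C_*\Lambda\ell^6,\sigma_\ell^2\bigr).
\end{equation*}

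It remains to identify the mean, which is the only algebraic point. By \eqref{eq:g_critical_final}, $\Lambda=-\tfrac12 C_*\tau^2$, so $C_*\Lambda\ell^6=-\tfrac12 C_*^2\tau^2\ell^6=-\tfrac12\sigma_\ell^2$. I would check this identity directly from Lemma~\ref{lem:g_cancellation}: there $\Lambda=\tfrac18\tau^2(s-1)$, and \eqref{eq:Cstar_def} gives $C_*=(1-s)/4$, so $-\tfrac12C_*\tau^2=\tfrac18\tau^2(s-1)$, which agrees. The resulting mean is $-\tfrac12$ times the variance, the usual consistency relation $\E[e^{R}]=1$ from optimal scaling theory. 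Reassuringly, it falls out here rather than being imposed.

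The main obstacle is justifying the $o_p(1)$ remainder in \eqref{eq:Rcancel_limit_expansion}. This needs three checks.
\begin{enumerate}
\item[(a)] $J=C_*\beta^2+o_p(\beta^2)$. This follows from Lemma~\ref{lem:coeff_general} together with $\alpha/r=-\|v\|^2/(2r^2)+O_p(\beta^4)$ and $\|v\|^2/r^2\to\beta^2/\kappa^2$, where the latter holds by the law of large numbers for $W\sim\chi^2_{d-2}$ and the bound $\gamma\|P_z(b)\|=O_p(\beta^2\sqrt d)$. The tilt term also needs $\langle\nhat,b\rangle/r\to\delta t^*/\kappa$, which follows from Lemma~\ref{lem:concentration_t}.
\item[(b)] The error $o_p(\beta^2)\cdot g$ is $o_p(1)$. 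This holds because $g=O_p(\beta\sqrt d)$ by \eqref{eq:g_order_statement}, so the product is $o_p(\beta^3\sqrt d)=o_p(\ell^3)$.
\item[(c)] $Q=O_p(\beta^4)\to0$.
\end{enumerate}
One care point is that the fluctuation coefficient uses $\|P_z(b)\|=\tau\sqrt d+O_p(1)$ from \eqref{eq:P_expansion}. The resulting correction $\beta^3\xi_1\cdot O_p(1)$ is $o_p(1)$, so it does not perturb the Gaussian term.

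Finally, the rejection event $\{\|v\|\ge r\}$, on which $\Rcancel=-\infty$, has probability tending to zero. Indeed, $\|v\|^2/r^2=O_p(\beta^2)\to0$, so this event does not affect convergence in distribution.
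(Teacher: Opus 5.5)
Your proposal is correct and follows the paper's proof essentially verbatim. Both apply Slutsky to the expansion \eqref{eq:Rcancel_limit_expansion}, use that $C_*\tau\ell^3\xi_1+C_*\Lambda\ell^6$ is exactly Gaussian for every $d$, and identify the mean via $\Lambda=-\tfrac12C_*\tau^2$. Your remainder checks (a)--(c) and the projection-failure remark match what the paper does before the theorem, via Lemma~\ref{lem:g_cancellation}, Lemma~\ref{lem:coeff_general} and the good event $\Omega_d$ of Lemma~\ref{lem:good_event}.
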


\begin{pf}
Under the standing assumptions, $\tau > 0$. Hence $C_*\tau\ell^3 \neq 0$, and
\begin{equation*}
C_*\tau\,\ell^3\,\xi_1 + C_*\Lambda\,\ell^6
  \sim
  \mathcal N\,\bigl(C_*\Lambda\,\ell^6,\; C_*^2\tau^2\ell^6\bigr),
\end{equation*}
exactly for every $d$, since $\xi_1 \sim \mathcal N(0,1)$. Adding the $o_p(1)$ remainder from \eqref{eq:Rcancel_limit_expansion} gives
\begin{equation*}
  \Rcancel \myrightarrow{d} \mathcal N\,\bigl(C_*\Lambda\,\ell^6,\; C_*^2\tau^2\ell^6\bigr).
\end{equation*}
Using $\Lambda = - C_*\tau^2 /2$ and $\sigma_\ell^2 = C_*^2\tau^2\ell^6$, the mean simplifies to $C_*\Lambda\ell^6 = - \sigma_\ell^2 / 2$, giving the stated form.
\end{pf}

\begin{cor}[Limiting acceptance probability]
\label{cor:acceptance_limit}
Let
\begin{equation}
  a_d(\beta) := \E_\pi\,\left[1 \wedge e^{\Rcancel}\right]
\end{equation}
denote the stationary mean acceptance probability at step size $\beta$. Under the scaling $\beta=\ell d^{-1/6}$,
\begin{equation}
  a_d(\ell d^{-1/6}) \longrightarrow a(\ell)
  := 2\,\Phi\,\left(-\frac{\sigma_\ell}{2}\right),
\end{equation}
where $\sigma_\ell$ is given by \eqref{eq:sigmaell_def}, and $\Phi$ denotes the standard normal cumulative distribution function.
\end{cor}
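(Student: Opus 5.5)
The plan is to combine the distributional limit of Theorem~\ref{thm:gaussian_limit} with the bounded-continuous-functional form of weak convergence, and then to evaluate the resulting one-dimensional Gaussian integral in closed form.

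The first step is the passage to the limit. The map $x\mapsto 1\wedge e^{x}$ is bounded by $1$ and continuous on $\RR$. We extend it to $[-\infty,\infty)$ by the value $0$ at $-\infty$, matching the convention $\Rcancel=-\infty$ on the rejection event $\{\|v\|\ge r\}$. This event must first be shown to be asymptotically negligible. Under $\beta=\ell d^{-1/6}$ we have $\|v\|^2=(\gamma\|P_z(b)\|+\beta\xi_1)^2+\beta^2W=O_p(\beta^2 d)$ by \eqref{eq:scalar_reduction}, while $r^2=\kappa^2 d$. Hence $\mathbb P(\|v\|\ge r)\to 0$, because $\beta\to0$ and $W/d\to1$. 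The probability of the bad event therefore contributes at most $o(1)$ to $a_d$. On its complement, $\Rcancel$ converges in distribution to $\mathcal N(-\tfrac12\sigma_\ell^2,\sigma_\ell^2)$ by Theorem~\ref{thm:gaussian_limit}; that theorem is already stated under the stationary joint law, with the event folded in as an $o_p(1)$ discrepancy. The portmanteau theorem then gives
\begin{equation*}
a_d(\ell d^{-1/6})=\E_\pi\bigl[1\wedge e^{\Rcancel}\bigr]\longrightarrow \E\bigl[1\wedge e^{X}\bigr],\qquad X\sim\mathcal N\bigl(-\tfrac12\sigma_\ell^2,\sigma_\ell^2\bigr).
\end{equation*}
No uniform integrability argument is needed, because the integrand is bounded.

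The second step is the standard computation from \cite{roberts-gelman-gilks:1997}. Write $\sigma=\sigma_\ell$ and $m=-\sigma^2/2$, and split
\begin{equation*}
\E[1\wedge e^X]=\mathbb P(X\ge0)+\E[e^X\mathbf 1_{X<0}].
\end{equation*}
The first term is $\mathbb P(X\ge0)=\Phi(m/\sigma)=\Phi(-\sigma/2)$. For the second term, complete the square. Under the exponential tilt by $e^{x}$, the law $\mathcal N(m,\sigma^2)$ becomes $\mathcal N(m+\sigma^2,\sigma^2)=\mathcal N(\sigma^2/2,\sigma^2)$, with normalising factor $e^{m+\sigma^2/2}=1$. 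This gives $\E[e^X\mathbf 1_{X<0}]=\Phi(-\sigma/2)$, so the two terms sum to $2\Phi(-\sigma_\ell/2)$. Equivalently, the same value follows from the symmetry identity $\E[e^X\mathbf 1_{X<0}]=\mathbb P(X>0)$ valid for $m=-\sigma^2/2$.

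The main obstacle is only the bookkeeping of the rejection event $\{\|v\|\ge r\}$ and of $\Rcancel=-\infty$ when applying weak convergence, since Theorem~\ref{thm:gaussian_limit} concerns a variable that can take the value $-\infty$. This is handled by the tail bound on $\|v\|/r$ above. Beyond that, the corollary is a direct consequence of the theorem.
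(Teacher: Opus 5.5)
Your proposal is correct and follows essentially the same route as the paper: pass to the limit using the distributional convergence of Theorem~\ref{thm:gaussian_limit} and the bounded continuous map $x\mapsto 1\wedge e^{x}$, then evaluate $\E[1\wedge e^{X}]=2\Phi(-\sigma_\ell/2)$ for $X\sim\mathcal N(-\sigma_\ell^2/2,\sigma_\ell^2)$. Your explicit handling of the projection-failure event $\{\|v\|\ge r\}$ and your worked Gaussian computation are harmless additions that the paper leaves implicit.
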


\begin{pf}
By Theorem~\ref{thm:gaussian_limit},
\begin{equation}
  \Rcancel \myrightarrow{d} X \sim \mathcal N\,\left(-\frac12\sigma_\ell^2,\sigma_\ell^2\right).
\end{equation}
Since the function $x\mapsto 1 \wedge e^x $ is bounded and continuous, it follows that
\begin{equation}
  a_d(\ell d^{-1/6})=\E_\pi[1 \wedge e^{\Rcancel}] \longrightarrow \E[1 \wedge e^{X}].
\end{equation}
For $X\sim \mathcal N(-\sigma_\ell^2/2,\sigma_\ell^2)$, the latter expectation is just $2\Phi\left(-\sigma_\ell / 2\right)$. This gives the result.
\end{pf}

\subsection{Consequences for scaling and optimal tuning}
\label{sec:cons-scal-optim}

The expansion above yields the following trichotomy for the step size parameter $\beta$.

\begin{cor}[Scaling trichotomy]
\label{cor:scaling_trichotomy}
Assume $C_*\neq 0$ and let $\beta_d\to 0$. Under the stationary proposal law, define $\lambda_d := \beta_d^3\sqrt d$, then the following three regimes hold.

\begin{enumerate}
  \item If $\lambda_d\to 0$, then
  \begin{equation}
    \Rcancel \myrightarrow{p} 0,
  \end{equation}
  and hence the stationary acceptance probability satisfies
  \begin{equation}
    a_d(\beta_d)\to 1.
  \end{equation}

  \item If $\lambda_d\to \ell^3\in(0,\infty)$, then
  \begin{equation}
    \Rcancel \myrightarrow{d} \mathcal N\left(-\frac12\sigma_\ell^2,\sigma_\ell^2\right),
  \end{equation}
where $\sigma_\ell$ is defined in \eqref{eq:sigmaell_def}, and therefore
  \begin{equation}
    a_d(\beta_d)\to a(\ell)\,,
  \end{equation}
with $a(\ell)$ defined in Corollary~\ref{cor:acceptance_limit}.
  \item If $\lambda_d\to\infty$, then
  \begin{equation}
    \Rcancel \myrightarrow{p} -\infty,
  \end{equation}
  and hence
  \begin{equation}
    a_d(\beta_d)\to 0.
  \end{equation}
\end{enumerate}
\end{cor}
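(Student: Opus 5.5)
The plan is to reuse the decomposition \eqref{eq:R_JQ_decomposition}, $\Rcancel = Jg + Q$, together with the expansion of $g$ from Lemma~\ref{lem:g_cancellation}. The coefficient asymptotics $J = C_*\beta^2 + o_p(\beta^2)$ (Lemma~\ref{lem:coeff_general} with $c_\beta=\gamma$) hold for any $\beta_d\to0$, and do not depend on the critical scaling. Likewise $Q = O_p(\beta^2 g^2/d)$. I will write
\begin{equation*}
\Rcancel = C_*\beta^2\Bigl(\beta\tau\sqrt d\,\xi_1 + \Lambda\beta^4 d\Bigr) + E_d ,
\end{equation*}
so that $C_*\beta^2\cdot\beta\tau\sqrt d\,\xi_1 = C_*\tau\lambda_d\xi_1$ and $C_*\Lambda\beta^6 d = C_*\Lambda\lambda_d^2 = -\tfrac12 C_*^2\tau^2\lambda_d^2$. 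The three regimes then differ only in how the remainder $E_d$ is controlled relative to these main terms.

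\textbf{Regimes (i) and (ii).} Here $\lambda_d = O(1)$, so $\beta^3\sqrt d=O(1)$ and Lemma~\ref{lem:g_cancellation} applies directly.
\begin{itemize}
\item Multiplying each remainder in \eqref{eq:g_final_clean} by $\beta^2$ gives $O_p(\beta^3)$, $O_p(\beta^4\sqrt d)=O_p(\beta\lambda_d)$, $O_p(\beta^5\sqrt d)$ and $O(\beta^8 d)=O(\beta^2\lambda_d^2)$. All of these are $o_p(1)$ since $\beta\to0$.
\item The error in $J$ contributes $o_p(\beta^2)\cdot O_p(\beta\sqrt d)=o_p(\lambda_d)=o_p(1)$.
\item $Q=O_p(\beta^4)=o_p(1)$.
\end{itemize}
Hence $\Rcancel = C_*\tau\lambda_d\xi_1 - \tfrac12 C_*^2\tau^2\lambda_d^2 + o_p(1)$. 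If $\lambda_d\to0$, this tends to $0$ in probability. If $\lambda_d\to\ell^3$, it converges in distribution to $\mathcal N(-\sigma_\ell^2/2,\sigma_\ell^2)$, exactly as in Theorem~\ref{thm:gaussian_limit}; the argument only uses $\lambda_d\to\ell^3$, not the exact form $\beta=\ell d^{-1/6}$. The acceptance limits then follow by bounded continuous convergence of $x\mapsto 1\wedge e^x$, as in Corollary~\ref{cor:acceptance_limit}. In (i) the limit of $\Rcancel$ is the constant $0$, giving $a_d\to1$.

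\textbf{Regime (iii)} is the main obstacle, because Lemma~\ref{lem:g_cancellation} as stated only yields $g=O_p(\beta\sqrt d)$ under $\beta^3\sqrt d=O(1)$. The expansion \eqref{eq:g_final_clean} itself, however, only assumes $\beta\to0$. I will divide by $\lambda_d^2$ and aim to show
\begin{equation*}
\Rcancel/\lambda_d^2 \myrightarrow{p} -\tfrac12 C_*^2\tau^2<0 .
\end{equation*}
The main drift term gives exactly this constant, and the fluctuation term $C_*\tau\lambda_d\xi_1/\lambda_d^2$ vanishes.

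For the remainders, multiply by $\beta^2$ and divide by $\lambda_d^2=\beta^6 d$:
\begin{itemize}
\item the $O_p(\beta)$ term gives $O_p(\beta^{-3}d^{-1})$;
\item the $O_p(\beta^2\sqrt d)$ term gives $O_p(\beta^{-2}d^{-1/2})=O_p(\beta/\lambda_d)$;
\item the $O_p(\beta^3\sqrt d)$ term gives $O_p(1/\lambda_d)$;
\item the $O(\beta^6 d)$ term gives $O(\beta^2)$.
\end{itemize}
The first is $O_p(\beta^{-3}d^{-1}) = O_p(\lambda_d^{-1}d^{-1/2}\beta^{-0})$-type smallness: since $\lambda_d\to\infty$ and $\beta\to 0$ with $\beta^3 d\ge\beta^3\sqrt d\to\infty$, it tends to $0$. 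The others vanish because $\beta\to0$ and $\lambda_d\to\infty$.

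The error in $J$ gives $o_p(\beta^2)\,g/\lambda_d^2$. With $g = O_p(\beta\sqrt d+\beta^4 d)$ this is $o_p(1/\lambda_d+1)=o_p(1)$. This step requires Lemma~\ref{lem:coeff_general}'s $o_p(\beta^2)$ to hold uniformly without the critical scaling, which I would check from the explicit form of $J$: it involves only $\alpha/r\approx -\|v\|^2/(2r^2)$ and $\langle\nhat,b\rangle/r$, so it should.

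For $Q$, note $g^2/r^2 = O_p(\beta^2+\beta^8 d)$, so
\begin{equation*}
Q/\lambda_d^2 = O_p\bigl(\beta^4/\lambda_d^2+\beta^{10}d/(\beta^6 d)\bigr)=O_p\bigl(\beta^4/\lambda_d^2+\beta^4\bigr)\to0 .
\end{equation*}

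The rejection event $\{\|v\|\ge r\}$ has vanishing probability since $\|v\|\approx\beta\sqrt d\ll r$, so the convention $\Rcancel=-\infty$ there does not affect the limit. Thus $\Rcancel\to-\infty$ in probability, and by dominated convergence $a_d\to0$.
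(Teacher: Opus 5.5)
Your proposal is correct and follows the paper's proof. It uses the same split $\Rcancel=Jg+Q$, the expansion \eqref{eq:g_final_clean} (which holds for every $\beta_d\to0$), Lemma~\ref{lem:coeff_general} for $J$, and division by $\lambda_d^2$ in the supercritical case. The paper merely merges your two cases into the single expansion $\Rcancel=C_*\tau\lambda_d\xi_1+C_*\Lambda\lambda_d^2+o_p(\lambda_d+\lambda_d^2)$ before reading off the regimes. Your worry about Lemma~\ref{lem:coeff_general} is unnecessary, since that lemma is already stated for every $\beta_d\to0$ on $\Omega_d$.
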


\begin{pf}
Proof given in Appendix~\ref{app:scaling_trichotomy_proof}
\end{pf}

The critical regime therefore has exponent $d^{-1/6}$. To choose the tuning constant $\ell$, we can optimise the expected squared jump distance (ESJD) as is standard practice in optimal scaling~\cite{roberts-rosenthal:1998,beskos-pillai-roberts-etal:2013}. On the sphere, the natural measure of progress is the squared geodesic distance~\cite{lie-rudolf-sprungk-etal:2023}, so we define
\begin{equation}
\mathrm{ESJD} := \E_\pi\,\bigl[ d_{\mathrm{geo}}(z,z')^2\,a(z,v)\bigr],
  \label{eq:ESJD_def}
\end{equation}
where $d_{\mathrm{geo}}$ is the geodesic distance on $S^{d-1}(b,r)$ and $a(z,v)$ is the acceptance probability. The main result of this subsection is the following.

\begin{thm}[Limiting ESJD]
\label{thm:ESJD_limit}
Assume $C_* \neq 0$ and $\beta = \ell d^{-1/6}$ with $\ell > 0$ fixed. Under the stationary proposal law,
\begin{equation}
  \lim_{d \to \infty}
  d^{-2/3}\,\mathrm{ESJD}_d(\ell)
  = \mathcal E(\ell)
  := \ell^2 \,2\Phi\,\left(-\tfrac12 \sigma_\ell\right),
\label{eq:ESJD_limit}
\end{equation}
where $\sigma_\ell$ is defined in \eqref{eq:sigmaell_def}.
\end{thm}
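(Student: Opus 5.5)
The plan is to show that the normalised squared geodesic jump converges in probability to the constant $\ell^2$, and then combine this with the acceptance limit of Corollary~\ref{cor:acceptance_limit} through a uniform integrability argument.

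The first step is an exact formula for the jump. By Proposition~\ref{prop:sphere_backprojection}(ii), $\langle \nhat,\nhat'\rangle = 1+\alpha/r = \sqrt{1-\|v\|^2/r^2}$. The geodesic angle $\phi$ between $z$ and $z'$ therefore satisfies $\sin\phi = \|v\|/r$ with $\phi\in[0,\pi/2)$, which gives
\begin{equation*}
d_{\mathrm{geo}}(z,z') = r\arcsin(\|v\|/r).
\end{equation*}
Since $\arcsin(x)^2 = x^2(1+O(x^2))$ for small $x$, we get $d_{\mathrm{geo}}^2 = \|v\|^2(1+O(\|v\|^2/r^2))$. Also $\arcsin(x)\le \tfrac{\pi}{2}x$ on $[0,1)$, so $d_{\mathrm{geo}}^2\le \tfrac{\pi^2}{4}\|v\|^2$ deterministically on the event of successful projection. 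On the rejection event $\{\|v\|\ge r\}$ the acceptance probability is zero, so that event contributes nothing.

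The second step identifies the size of $\|v\|^2$. By \eqref{eq:scalar_reduction},
\begin{equation*}
\|v\|^2 = (\gamma\|P_z(b)\|+\beta\xi_1)^2+\beta^2 W.
\end{equation*}
We have $\gamma\|P_z(b)\| = O_p(\beta^2\sqrt d)$ by \eqref{eq:P_expansion}, so the first term is $O_p(\beta^4 d+\beta^2)$. Under $\beta=\ell d^{-1/6}$ this is $O_p(d^{1/3})$, which is $o_p(d^{2/3})$. The second term satisfies $\beta^2 W = \ell^2 d^{-1/3}\chi^2_{d-2} = \ell^2 d^{2/3}(1+o_p(1))$. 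Hence $d^{-2/3}\|v\|^2\to\ell^2$ in probability. Because $\|v\|^2/r^2=O_p(d^{-1/3})\to0$, the same limit holds for $d^{-2/3}d_{\mathrm{geo}}^2$.

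The third step combines the two limits. Write $a(z,v)=1\wedge e^{\Rcancel}\in[0,1]$. Then
\begin{equation*}
d^{-2/3}\mathrm{ESJD}_d = \ell^2\,\E_\pi[a] + \E_\pi\bigl[(d^{-2/3}d_{\mathrm{geo}}^2-\ell^2)\,a\bigr].
\end{equation*}
The first term converges to $\ell^2\,2\Phi(-\sigma_\ell/2)$ by Corollary~\ref{cor:acceptance_limit}. For the second term, $|a|\le1$, so it suffices to show $\E_\pi|d^{-2/3}d_{\mathrm{geo}}^2-\ell^2|\to0$. Convergence in probability is already established, so what remains is uniform integrability of $d^{-2/3}d_{\mathrm{geo}}^2$.

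Uniform integrability is the main obstacle, and I would handle it through a second-moment bound. Using $d_{\mathrm{geo}}^2\le \tfrac{\pi^2}{4}\|v\|^2$, it is enough to bound $\sup_d \E_\pi[(d^{-2/3}\|v\|^2)^2]<\infty$. The bound reduces to three moment estimates:
\begin{itemize}
\item $\E[W^2]=O(d^2)$, since $W\sim\chi^2_{d-2}$;
\item $\E[\xi_1^4]<\infty$;
\item $\|P_z(b)\|\le\|b\|=\delta\sqrt d$ deterministically.
\end{itemize}
These give $\E\|v\|^4 = O(\beta^8 d^2+\beta^4 d^2)=O(d^{4/3})$. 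This bound avoids Lemma~\ref{lem:concentration_t}, since only the deterministic bound on $\|P_z(b)\|$ is needed. Combining the three steps yields \eqref{eq:ESJD_limit}.
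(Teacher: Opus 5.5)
Your proof is correct and rests on the same idea as the paper's. The normalised squared jump concentrates at the deterministic value $\ell^2$, so it asymptotically decouples from the acceptance probability, and Corollary~\ref{cor:acceptance_limit} then supplies the limit.

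The two arguments differ in how the decoupling is carried out.
\begin{itemize}
\item The paper argues quantitatively (Lemmas~\ref{lem:geodesic_identity}--\ref{lem:factorisation}). It splits $d_{\mathrm{geo}}^2=\|v\|^2+(d_{\mathrm{geo}}^2-\|v\|^2)$ and bounds the curvature correction by $C\,\E_\pi\|v\|^4/r^2=O(d^{1/3})$. It then decouples $\|v\|^2$ from $a$ by Cauchy--Schwarz against $\eta_d=\|v\|^2/M_d-1$. This needs the stationary mean and variance of $\|v\|^2$, which uses Lemma~\ref{lem:concentration_t}, to get $\E_\pi[\eta_d^2]=O(d^{-1})$.
\item You normalise directly by $\ell^2 d^{2/3}$ and use only the chord bound $d_{\mathrm{geo}}^2\le\frac{\pi^2}{4}\|v\|^2$ for domination. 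You pass from convergence in probability to $L^1$ convergence via uniform integrability. That comes from a crude fourth-moment bound resting only on $\|P_z(b)\|\le\delta\sqrt d$, $|\gamma|\le\beta^2$ and $\chi^2$ moments.
\end{itemize}

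Your route is more elementary for this step: it needs no stationary variance computation and no appeal to Lemma~\ref{lem:concentration_t}, although the corollary you invoke still depends on it. It is purely qualitative, however. The paper's estimates additionally show that, apart from the rate in $a_d\to a(\ell)$, the error in $d^{-2/3}\mathrm{ESJD}_d$ is explicitly $O(d^{-1/3})$. This comes from $M_d/d^{2/3}=\ell^2(1+O(d^{-1/3}))$ and the geodesic correction; the decoupling term contributes only $O(d^{-1/2})$.
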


We first state the following lemmas to aid us in a proof for the above theorem.

\begin{lem}[Geodesic length of a projected step]
\label{lem:geodesic_identity}
For any $z \in \Mc$ and $v \in T_z\Mc$ with $\|v\| < r$, let $z' = z + v + \alpha\nhat \in \Mc$ be the projected proposal from Proposition~\ref{prop:sphere_backprojection}. Then
\begin{equation}
  d_{\mathrm{geo}}(z,z') = r\arcsin\,\bigl(\|v\|/r\bigr).
  \label{eq:geodesic_exact}
\end{equation}
In particular,
\begin{equation}
  \bigl|\,d_{\mathrm{geo}}(z,z')^2 - \|v\|^2\,\bigr| \le C\,\|v\|^4/r^2
  \label{eq:geodesic_expand}
\end{equation}
for a constant $C>0$ independent of $z$, $v$ and $r$.
\end{lem}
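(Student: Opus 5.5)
The plan is to compute the angle between the normals $\nhat$ and $\nhat'$ directly, since on the sphere of radius $r$ the geodesic distance between $z=b+r\nhat$ and $z'=b+r\nhat'$ is $r\,\angle(\nhat,\nhat')$. By Proposition~\ref{prop:sphere_backprojection}(ii), $\nhat'=(1+\alpha/r)\nhat+v/r$, and by (i), $1+\alpha/r=\sqrt{1-\|v\|^2/r^2}\ge0$. Since $v\perp\nhat$, the decomposition of $\nhat'$ into its $\nhat$-component and its orthogonal component is explicit. Writing $\phi=\arcsin(\|v\|/r)\in[0,\pi/2)$, we have $\langle\nhat,\nhat'\rangle=\cos\phi$ and $\|\nhat'-\langle\nhat,\nhat'\rangle\nhat\|=\|v\|/r=\sin\phi$. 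Hence the angle between the unit vectors is exactly $\phi$; here the nonnegativity of the cosine, coming from the choice of the plus root, is what places the angle in $[0,\pi/2)$ rather than $\pi-\phi$. Multiplying by $r$ gives \eqref{eq:geodesic_exact}.

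For \eqref{eq:geodesic_expand}, set $u=\|v\|/r\in[0,1)$, so that $d_{\mathrm{geo}}^2-\|v\|^2=r^2\bigl(\arcsin^2u-u^2\bigr)$. It then suffices to show $0\le\arcsin^2u-u^2\le Cu^4$ on $[0,1)$ with an absolute constant $C$. The function $h(u)=(\arcsin^2u-u^2)/u^4$ is continuous on $(0,1]$ and extends continuously to $u=0$: the Taylor series $\arcsin u=u+u^3/6+O(u^5)$ gives $\arcsin^2u=u^2+u^4/3+O(u^6)$, so $h(0^+)=1/3$. At $u=1$ the function is still finite, with $h(1)=\pi^2/4-1$. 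Thus $h$ is bounded on the compact interval $[0,1]$, and we may take $C=\sup h$; one could even check $C=\pi^2/4-1$ by monotonicity, but any bound suffices. Translating back gives $|d_{\mathrm{geo}}^2-\|v\|^2|\le C\|v\|^4/r^2$, with $C$ independent of $z$, $v$ and $r$ because everything depends only on $u$.

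The only delicate point is ensuring the angle is $\phi$ and not its supplement, which the root choice of Proposition~\ref{prop:sphere_backprojection} settles; the rest is elementary.
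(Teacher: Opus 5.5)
Your proof is correct and follows the paper's argument. The paper likewise reads off $\langle\nhat,\nhat'\rangle = 1+\alpha/r = \sqrt{1-\|v\|^2/r^2}$ from Proposition~\ref{prop:sphere_backprojection}(ii), writes $d_{\mathrm{geo}} = r\arccos(\sqrt{1-u^2}) = r\arcsin u$, and bounds $(\arcsin^2u-u^2)/u^4$ by its continuous, nonnegative extension to $[0,1]$ (value $1/3$ at $u=0$). Your extra sine-component check and the explicit constant $C=\pi^2/4-1$ are harmless refinements; the constant is valid because the power series of $\arcsin^2u-u^2$ has nonnegative coefficients, so the ratio is increasing.
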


\begin{pf}
By definition $d_{\mathrm{geo}}(z,z') = r \arccos(\langle\nhat,\nhat'\rangle)$. Proposition~\ref{prop:sphere_backprojection}(ii) gives $d_{\mathrm{geo}}(z,z')= r\arccos\,\bigl(\sqrt{1 - \|v\|^2/r^2}\bigr) = r\arcsin\,\bigl(\|v\|/r\bigr)$, using $\arccos(\sqrt{1-u^2}) = \arcsin(u)$ for $u \in [0,1]$. This proves \eqref{eq:geodesic_exact}. The function $u\longmapsto (\arcsin^2u-u^2) /u^4$ is nonnegative and extends continuously to $[0,1]$, with a value of $1/3$ at $u=0$. Its boundedness gives the claimed bound \eqref{eq:geodesic_expand}.
\end{pf}

\begin{lem}[Moments of the tangent increment]
\label{lem:v2_moment}
For any $z \in \Mc$, under the tangent pCN proposal,
\begin{equation}
  \E[\|v\|^2 \mid z] = \beta^2(d-1) + \gamma^2 \|P_z(b)\|^2.
  \label{eq:v2_mean}
\end{equation}
Define $M_d := \E_\pi[\|v\|^2]$, then under the critical scaling $\beta = \ell d^{-1/6}$ and stationarity,
\begin{equation}
  M_d = \beta^2(d-1)\bigl(1 + O(\beta^2)\bigr)
  = \ell^2 d^{2/3}\bigl(1 + o(1)\bigr),
  \label{eq:Md_leading}
\end{equation}
\begin{equation}
  \Var_\pi(\|v\|^2) = 2\beta^4(d-1)\bigl(1+o(1)\bigr),
  \label{eq:v2_total_var}
\end{equation}
and consequently
\begin{equation}
  \E_\pi\,\left[\left(\frac{\|v\|^2}{M_d} - 1\right)^2\right] = O(d^{-1}).
  \label{eq:eta_L2}
\end{equation}
\end{lem}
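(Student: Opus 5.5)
The plan is a direct computation in the orthonormal tangent frame of \eqref{eq:orthonormal-basis}, followed by the law of total variance under stationarity, with Lemma~\ref{lem:concentration_t} controlling the fluctuations of the current state. Fix $z\in\Mc$ and write $m:=\gamma\|P_z(b)\|$. By \eqref{eq:scalar_reduction},
\begin{equation*}
\|v\|^2=(m+\beta\xi_1)^2+\beta^2W, \qquad \xi_1\sim\mathcal N(0,1),\quad W\sim\chi^2_{d-2},
\end{equation*}
and $\xi_1$, $W$ are independent. Taking conditional expectations gives $m^2+\beta^2+\beta^2(d-2)$, which is exactly \eqref{eq:v2_mean}.

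The same representation gives the conditional variance. We have $\Var\bigl((m+\beta\xi_1)^2\bigr)=4m^2\beta^2+2\beta^4$ and $\Var(\beta^2W)=2\beta^4(d-2)$. Hence
\begin{equation*}
\Var(\|v\|^2\mid z)=2\beta^4(d-1)+4\beta^2m^2.
\end{equation*}

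For \eqref{eq:Md_leading}, I would use $\|P_z(b)\|^2=\delta^2d(1-t^2)\le\delta^2 d$ and $\gamma=-\tfrac12\beta^2+O(\beta^4)$. Together these give $\gamma^2\|P_z(b)\|^2=O(\beta^4d)$ uniformly in $z$. Its $\pi$-expectation is therefore $\beta^2(d-1)\,O(\beta^2)$. Substituting $\beta=\ell d^{-1/6}$ yields $M_d=\ell^2d^{2/3}(1+o(1))$.

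For \eqref{eq:v2_total_var}, I would apply the law of total variance,
\begin{equation*}
\Var_\pi(\|v\|^2)=\E_\pi\bigl[\Var(\|v\|^2\mid z)\bigr]+\Var_\pi\bigl(\E[\|v\|^2\mid z]\bigr),
\end{equation*}
and show that everything other than $2\beta^4(d-1)$ is $o(\beta^4 d)$.
\begin{enumerate}
\item The mixed term satisfies $4\beta^2m^2=O(\beta^6 d)$ uniformly in $z$, which is $o(\beta^4d)$ because $\beta\to0$.
\item The second term equals $\gamma^4\delta^4d^2\,\Var_\pi(t^2)$. Since $|t|\le1$, we have $(t^2-(t^*)^2)^2\le4(t-t^*)^2$. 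Lemma~\ref{lem:concentration_t} with $k=2$ then gives $\Var_\pi(t^2)\le\E_\pi[(t^2-(t^*)^2)^2]=O(d^{-1})$. This term is therefore $O(\beta^8 d)=o(\beta^4d)$.
\end{enumerate}

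Finally, $M_d=\E_\pi\|v\|^2$ is the mean of $\|v\|^2$ under the joint stationary law. Hence the left-hand side of \eqref{eq:eta_L2} equals $\Var_\pi(\|v\|^2)/M_d^2$. By the two previous parts this is $2\beta^4(d-1)(1+o(1))/\bigl(\beta^4(d-1)^2(1+O(\beta^2))\bigr)=O(d^{-1})$.

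I do not expect a real obstacle here. The only step requiring care is the second term of the total variance, namely the variability of the drift $\gamma^2\|P_z(b)\|^2$ across states. Naively it is of order $\beta^8d^2$, and it is the $O(d^{-1})$ concentration of $t$ from Lemma~\ref{lem:concentration_t} that reduces it to $O(\beta^8 d)$.
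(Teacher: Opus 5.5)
Your proposal is correct and follows the paper's proof essentially step for step: the scalar/basis decomposition for the conditional mean and variance, the law of total variance, concentration of $t$ to control the variability of the drift term, and division by $M_d^2$. The only cosmetic differences are that you bound $\gamma^2\|P_z(b)\|^2$ by the deterministic $\gamma^2\delta^2 d$ instead of using bounded convergence, and you spell out the $\Var_\pi(t^2)=O(d^{-1})$ step via $|t^2-(t^*)^2|\le 2|t-t^*|$, which the paper only asserts.
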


\begin{pf}
Conditional on $z$, applying the Gaussian second-moment identity to the tangent pCN proposal gives $\E[\|v\|^2 \mid z] = \| \gamma P_z(b) \|^2 + \mathrm{tr}(\beta^2 P_z)$ with $\mathrm{tr}(P_z) = d-1$ gives \eqref{eq:v2_mean}. Averaging under $\pi$, we have $M_d = \beta^2(d-1) + \gamma^2 \E_\pi[\|P_z(b)\|^2]$. We can write $\|P_z(b)\|^2=\delta^2 d(1-t^2)$. Since $t \myrightarrow{p} t^*$ by Lemma~\ref{lem:concentration_t} and $0 \le1 - t^2 \le 1$, bounded convergence gives $\E_\pi[1-t^2] \to 1-(t^*)^2$. Therefore, $\E_\pi[\|P_z(b)\|^2] = \delta^2 d\,\E_\pi[1-t^2] = \delta^2 d(1-(t^*)^2)(1+o(1)) = \tau^2 d(1+o(1))$. Since $\gamma^2=\beta^4 / 4+O(\beta^6)$, it follows that $\gamma^2\E_\pi[\|P_z(b)\|^2]=O(\beta^4 d)$, and hence $M_d=\beta^2(d-1)\bigl(1+O(\beta^2)\bigr)$. Under $\beta=\ell d^{-1/6}$, this becomes $M_d=\ell^2 d^{2/3}(1+o(1))$, proving \eqref{eq:Md_leading}.

For the total variance, basis decomposition gives $\Var(\|v\|^2 \mid z) = 2\beta^4(d-1) + 4\beta^2\gamma^2\|P_z(b)\|^2$, using $\Var(X^2)=2\sigma^4+4\sigma^2\mu^2$ for $X\sim\mathcal N(\mu,\sigma^2)$. We will use $\E_\pi[\|P_z(b)\|^2] = O(d)$ and $\Var_\pi(\|P_z(b)\|^2) = O(d)$, both of which follow from $\|P_z(b)\|^2 = \delta^2 d(1-t^2)$ together with Lemma~\ref{lem:concentration_t}. By the law of total variance, the first term contributes
\begin{equation*}
  \E_\pi[\Var(\|v\|^2 \mid z)]
  = 2\beta^4(d-1) + 4\beta^2\gamma^2\E_\pi[\|P_z(b)\|^2]
  = 2\beta^4(d-1)(1+o(1)),
\end{equation*}
using $\gamma^2 \E_\pi[\|P_z(b)\|^2] = O(\beta^4 d)$, and the second contributes
\begin{equation*}
  \Var_\pi(\E[\|v\|^2 \mid z]) = \gamma^4 \Var_\pi(\|P_z(b)\|^2)
  = O(\beta^8 d) = o(\beta^4 d).
\end{equation*}
Hence $\Var_\pi(\|v\|^2) = 2\beta^4(d-1)(1+o(1))$, proving \eqref{eq:v2_total_var}. Finally, since $\|v\|^2/M_d - 1$ has mean zero,
\begin{equation*}
  \E_\pi\,\left[\left(\frac{\|v\|^2}{M_d} - 1\right)^2\right]
  = \frac{\Var_\pi(\|v\|^2)}{M_d^2}
  = \frac{O(\beta^4 d)}{O(\beta^4 d^2)}
  = O(d^{-1}),
\end{equation*}
proving \eqref{eq:eta_L2}.
\end{pf}

\begin{lem}[Asymptotic factorisation]
\label{lem:factorisation}
Under the critical scaling $\beta = \ell d^{-1/6}$,
\begin{equation}
  \E_\pi[\|v\|^2 \cdot a(z,v)]
  = M_d \cdot \E_\pi[a(z,v)] + o(M_d)\,,
  \label{eq:factorisation}
\end{equation}
where $a(z,v)$ is the acceptance probability of the scheme.
\end{lem}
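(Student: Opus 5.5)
The plan is to write $\|v\|^2 = M_d(1+\eta)$ with $\eta := \|v\|^2/M_d - 1$. Then
\begin{equation*}
  \E_\pi[\|v\|^2 a(z,v)] - M_d\,\E_\pi[a(z,v)] = M_d\,\E_\pi[\eta\, a(z,v)].
\end{equation*}
It therefore suffices to show $\E_\pi[\eta\,a(z,v)] \to 0$. The key feature is that the acceptance probability is uniformly bounded, $0\le a(z,v)\le 1$ (including the rejection convention $a=0$ on $\{\|v\|\ge r\}$). So no information about the joint law of $(\eta, \Rcancel)$ is needed.

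Apply Cauchy--Schwarz:
\begin{equation*}
  \bigl|\E_\pi[\eta\, a(z,v)]\bigr| \le \E_\pi[\eta^2]^{1/2}\,\E_\pi[a(z,v)^2]^{1/2} \le \E_\pi[\eta^2]^{1/2}.
\end{equation*}
By \eqref{eq:eta_L2} of Lemma~\ref{lem:v2_moment}, $\E_\pi[\eta^2] = O(d^{-1})$, so the right-hand side is $O(d^{-1/2})$. Hence
\begin{equation*}
  \E_\pi[\|v\|^2 a(z,v)] = M_d\,\E_\pi[a(z,v)] + O(M_d\, d^{-1/2}) = M_d\,\E_\pi[a(z,v)] + o(M_d),
\end{equation*}
which is \eqref{eq:factorisation}. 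This even gives the quantitative rate $O(\ell^2 d^{1/6})$ for the error, using $M_d=\ell^2 d^{2/3}(1+o(1))$ from \eqref{eq:Md_leading}.

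The only point requiring care is that the expectation is over the full stationary joint law of $(z,v)$, matching the one used in Lemma~\ref{lem:v2_moment}. We must also check that $a(z,v)$ is well defined and bounded on the rejection event. There it equals $0$, so the bound $|a|\le 1$ still holds, and $\|v\|^2$ is integrable there because $v$ is Gaussian given $z$.

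The main potential obstacle is that the second moment bound \eqref{eq:eta_L2} relies on the variance computation of Lemma~\ref{lem:v2_moment}, in particular on $\Var_\pi(\|P_z(b)\|^2)=O(d)$ from Lemma~\ref{lem:concentration_t}. Since this is already established, the factorisation reduces to the Cauchy--Schwarz step above. If a sharper statement were needed, one would instead use the concentration of $\|v\|^2$ around $M_d$, but boundedness of $a$ makes that unnecessary here.
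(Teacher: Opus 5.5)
Your proposal is correct and follows the paper's proof essentially verbatim. Like the paper, you write $\|v\|^2 = M_d(1+\eta_d)$, apply Cauchy--Schwarz using $0\le a\le 1$, and invoke $\E_\pi[\eta_d^2]=O(d^{-1})$ from \eqref{eq:eta_L2}, which gives an $O(M_d d^{-1/2}) = o(M_d)$ error; your remarks on the rejection convention and the explicit $O(\ell^2 d^{1/6})$ rate are accurate additions.
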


\begin{pf}
Write $\|v\|^2 = M_d(1 + \eta_d)$ where $\eta_d := \|v\|^2/M_d - 1$. Then
\[
  \E_\pi[\|v\|^2 \cdot a]
  = M_d \E_\pi[a] + M_d \E_\pi[\eta_d \cdot a].
\]
Since $0 \le a \le 1$, Cauchy--Schwarz gives
\[
  |\E_\pi[\eta_d a]|
  \le \E_\pi[\eta_d^2]^{1/2} \E_\pi[a^2]^{1/2}
  \le \E_\pi[\eta_d^2]^{1/2}
  = O(d^{-1/2}),
\]
using \eqref{eq:eta_L2}. Hence $M_d \E_\pi[\eta_d a] = o(M_d)$.
\end{pf}

With Lemmas~\ref{lem:geodesic_identity}--\ref{lem:factorisation} in hand, we are now ready to prove Theorem~\ref{thm:ESJD_limit}.

\begin{pf}
A proposal with $\|v\|\geq r$ is rejected by the convention of Section~\ref{sec:exact-metr-defect}, so $a(z,v)=0$ on $\{\|v\| \geq r\}$ and the integrand in \eqref{eq:ESJD_def} vanishes there; all expectations below thus reduce to integrals over $\{\|v\| <  r\}$, where Lemma~\ref{lem:geodesic_identity} applies. Splitting $d_{\mathrm{geo}}^2=\|v\|^2+(d_{\mathrm{geo}}^2-\|v\|^2)$ and using that $\|v\|^2 a$ also vanishes on $\{\|v\| \ge r\}$,
\begin{equation}
  \mathrm{ESJD}_d(\ell)
  = \E_\pi\bigl[\|v\|^2\,a(z,v)\bigr]
  + \E_\pi\bigl[\bigl(d_{\mathrm{geo}}(z,z')^2-\|v\|^2\bigr)\,a(z,v)\bigr].
  \label{eq:ESJD_split}
\end{equation}

Since $a\le\mathds 1_{\{\|v\| < r\}}$, the bound \eqref{eq:geodesic_expand} gives
\begin{equation*}
  \bigl|\E_\pi[(d_{\mathrm{geo}}^2-\|v\|^2)\,a]\bigr|
  \le \E_\pi\bigl[|d_{\mathrm{geo}}^2-\|v\|^2|\,\mathds 1_{\{\|v\| < r\}}\bigr]
  \le \frac{C}{r^2}\,\E_\pi[\|v\|^4].
\end{equation*}
Using $\eta_d$ defined in the proof of Lemma~\ref{lem:factorisation}, the mean-zero property of $\eta_d$ and \eqref{eq:eta_L2} give $\E_\pi[\|v\|^4]=M_d^2(1+\E_\pi\eta_d^2)=M_d^2(1+O(d^{-1}))$. With $r=\kappa\sqrt d$ and $M_d=\ell^2 d^{2/3}(1+o(1))$ from \eqref{eq:Md_leading},
\begin{equation}
  \bigl|\E_\pi[(d_{\mathrm{geo}}^2-\|v\|^2)\,a]\bigr|
  \le \frac{C\,M_d^2(1+o(1))}{\kappa^2 d}
  = O(d^{1/3}).
  \label{eq:geo_correction_bound}
\end{equation}

By Lemma~\ref{lem:factorisation}, $\E_\pi[\|v\|^2 a]=M_d\,\E_\pi[a]+o(M_d)$. Substituting this and \eqref{eq:geo_correction_bound} into \eqref{eq:ESJD_split},
\begin{equation*}
  \mathrm{ESJD}_d(\ell)=M_d\,\E_\pi[a]+o(M_d)+O(d^{1/3}).
\end{equation*}
Dividing by $d^{2/3}$, and using $M_d/d^{2/3}=\ell^2(1+o(1))$ from \eqref{eq:Md_leading}, with $o(M_d)/d^{2/3}=o(1)$ and $O(d^{1/3})/d^{2/3}=o(1)$, we obtain
\begin{equation*}
  d^{-2/3}\,\mathrm{ESJD}_d(\ell)=\ell^2(1+o(1))\,\E_\pi[a]+o(1).
\end{equation*}
Finally $\E_\pi[a]=a_d(\ell d^{-1/6})\to 2\Phi(-\tfrac12\sigma_\ell)$ by Corollary~\ref{cor:acceptance_limit}, so the right-hand side converges to $\ell^2\cdot 2\Phi(-\tfrac12\sigma_\ell)=\mathcal E(\ell)$, which is \eqref{eq:ESJD_limit}.
\end{pf}

Maximising the limiting efficiency $\mathcal E(\ell)$ over $\ell$ now yields the optimal tuning. Setting
\begin{equation}
  x := \frac{\sigma_\ell}{2} = \frac{|C_*|\,\tau}{2}\,\ell^3,
  \label{eq:x_def}
\end{equation}
so that $\ell^2\propto x^{2/3}$, maximising $\mathcal E(\ell)=\ell^2\cdot2\Phi(-\sigma_\ell/2)$ over $\ell>0$ is equivalent to maximising $x^{2/3}\,\Phi(-x)$ over $x>0$. The first-order condition is
\begin{equation}
  \frac{2}{3}\,\Phi(-x) = x\,\phi(x)\,,
  \label{eq:xstar_eq}
\end{equation}
where $\phi$ denotes the standard normal density. Since $x\mapsto x\,\phi(x)/\Phi(-x)$ is strictly increasing on $(0,\infty)$ and grows from $0$ to $\infty$, equation \eqref{eq:xstar_eq} has a unique positive root, $x_* \approx 0.561824$. Hence the asymptotically ESJD-optimal acceptance probability and tuning constant of tangent pCN are
\begin{equation}
  a_* := 2\Phi(-x_*) \approx 0.574,
  \qquad
  \ell_* := \left(\frac{2x_*}{|C_*|\,\tau}\right)^{1/3};
  \label{eq:astar_ellstar}
\end{equation}
for $\delta=\kappa=1$, $\ell_*\approx 2.10$.

\subsection{Numerical verification of the optimal scaling}
\label{sec:numer-verif-optim}

We verify the predictions of Theorem~\ref{thm:gaussian_limit} and Corollary~\ref{cor:acceptance_limit} by exact computation on the shifted sphere. Since all relevant quantities are available in closed form, we bypass MCMC entirely: for each dimension~$d$, we draw $10^6$ independent samples of the scalar stationary statistic $t$ from its exact density \eqref{eq:t_density}, together with independent proposal statistics $ \xi_1 \sim \mathcal N(0,1)$, $ W \sim \chi^2_{d-2}$, and evaluate the proposal, projection, acceptance probability, jump size, and $\Rcancel$ exactly from the closed-form formulas. We fix $\delta=\kappa=1$ throughout, giving $t^* \approx -0.618$, $\tau \approx 0.786$, $C_* \approx -0.155$, and $\ell_* \approx 2.10$.

\begin{figure}[ht!]
  \centering
  \includegraphics[width=\linewidth]{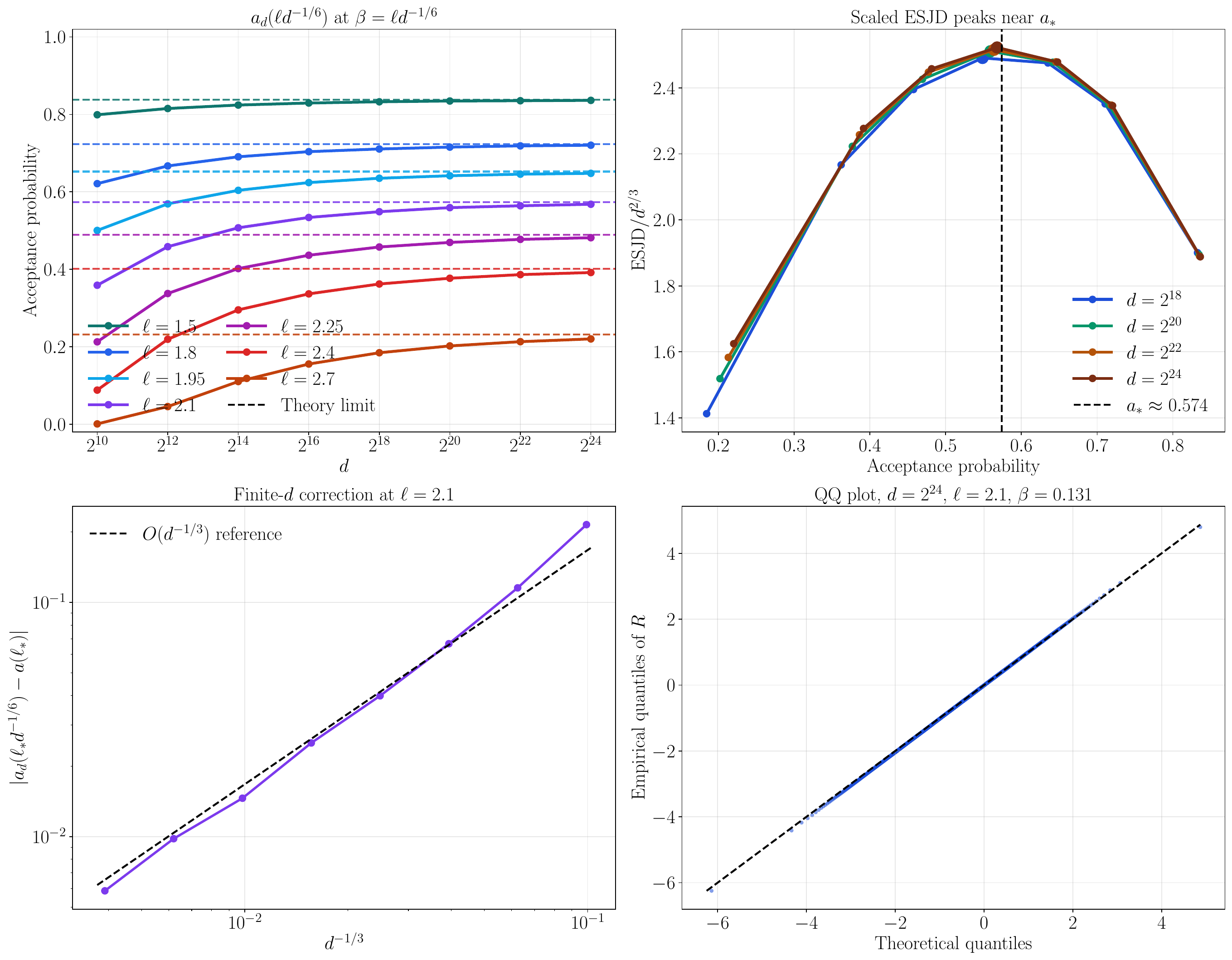}
  \caption{\small Numerical verification of the optimal scaling theory on the
    shifted sphere with $\delta=\kappa=1$, over dimensions $d \in \{2^{10},2^{12},2^{14},2^{16},2^{18},2^{20},2^{22},2^{24}\}$. \textbf{(a)} Acceptance probability $a_d(\ell d^{-1/6})$ versus $d$ for $\beta=\ell d^{-1/6}$ and several fixed values of~$\ell$. The coloured solid curves are the empirical acceptances; the matching dashed horizontal lines are the theoretical limits $a(\ell)$ defined in Corollary~\ref{cor:acceptance_limit}. \textbf{(b)} Normalised geodesic expected squared jump distance $\mathrm{ESJD}/d^{2/3}$ versus acceptance probability. The peak is attained near $\ell_* \approx 2.10$, corresponding to the ESJD-optimal acceptance probability $a_* \approx 0.574$. \textbf{(c)} Finite-dimensional correction: $|a_d(\ell_*d^{-1/6})-a(\ell_*)|$ versus $d^{-1/3}$ at $\ell=\ell_*$. The dashed line has unit slope, which shows empirically a leading $O(d^{-1/3})$ correction. \textbf{(d)} QQ plot of $\Rcancel$ at $d=2^{24}$ and $\ell=\ell_*$ against the theoretical $\mathcal N(-\sigma_\ell^2/2,\sigma_\ell^2)$ distribution from Theorem~\ref{thm:gaussian_limit}.}
  \label{fig:optimal_scaling}
\end{figure}
Figure~\ref{fig:optimal_scaling}(a) shows that the empirical acceptance probability converges to the limiting value $a(\ell)$ of Corollary~\ref{cor:acceptance_limit} as $d$ increases. At $d=2^{24}$, the agreement is within about $1\%$ at $\ell=2.1$; convergence is slower for larger $\ell$. The residual gap at moderate dimensions is consistent with a leading finite-dimensional correction of order $d^{-1/3}$. Panel~(b) shows that the normalised efficiency $\mathrm{ESJD}/d^{2/3}$ is maximised near $\ell_* \approx 2.10$, with the corresponding acceptance probability close to the theoretical optimum $a_* \approx 0.574$. Panel~(c) plots the finite-dimensional correction $|a_d(\ell_*d^{-1/6})-a(\ell_*)|$ against $d^{-1/3}$. The data follows the reference line consistently with the $O(d^{-1/3})$ correction. Finally, panel~(d) provides direct evidence for the Gaussian limit in Theorem~\ref{thm:gaussian_limit}; at $d=2^{24}$ and $\ell=2.1$, the empirical quantiles of $\Rcancel$ closely track those of the predicted $\mathcal N(-\sigma_\ell^2/2,\sigma_\ell^2)$ law.

\subsection{A drift family of proposals}
\label{sec:drift-family}

The ESJD-optimal acceptance probability $a_* \approx 0.574$ of tangent pCN coincides with that of MALA as presented in~\cite{roberts-rosenthal:1998}. This is not a coincidence, indeed both proposals are two nearby members of the same family. Tangent pCN and MALA are both members of a natural one-parameter family of drift-corrected tangent proposals, \eqref{eq:general_proposal} with $\sigma=\beta$,
\begin{equation}
  v \mid z \;\sim\; \N\bigl(c_\beta\,P_z(b),\ \beta^2 P_z\bigr),
  \qquad c_\beta \in \RR\,.
  \label{eq:drift_family}
\end{equation}
The choice $c_\beta \equiv 0$ is the tangent random-walk proposal (tangent RWMH); the choice $c_\beta = -\beta^2/2$ is tangent MALA, i.e.\ the Euler--Maruyama discretisation of the Langevin diffusion in the tangent step,
\begin{equation*}
v = \frac{\beta^2}{2}\,\mathrm{grad}_{\Mc}\log \frac{d\pi}{d\sigma_{\Mc}}(z) + \beta P_z\xi
 = -\frac{\beta^2}{2}P_z(b) + \beta P_z\xi\,,
\end{equation*}

followed by the backprojection of Proposition~\ref{prop:sphere_backprojection}, in the convention of the flat-space algorithm~\cite{roberts-rosenthal:1998,girolami-calderhead:2011}. Lastly, $c_\beta = \gamma = \sqrt{1-\beta^2}-1$ corresponds to tangent pCN. Every member defines a valid Metropolis--Hastings algorithm in the scheme of Section~\ref{sec:sampling-hypersurfaces}, since the acceptance ratio accounts for the drift. Its exact defect is \eqref{eq:Rcancel_general} with $c=c_\beta$.

\begin{prop}[Critical scaling for the drift family]
\label{prop:drift_family_scaling}
Let $c_\beta = -\tfrac12\beta^2 + \mu\beta^4 + o(\beta^4)$ with $\mu\in\RR$ fixed, assume stationarity, and let $\beta=\ell d^{-1/6}$ with $\ell>0$ fixed. Then:
\begin{enumerate}
\item[(i)] By Lemma~\ref{lem:g_refined}(i), the expansion \eqref{eq:g_critical_final} holds with $\Lambda$ replaced by
\begin{equation}
\Lambda(\mu) := \tau^2\Bigl(\mu + \frac{s}{8}\Bigr),
\label{eq:Lambda_mu_def}
\end{equation}
the drift contributing $\mu\tau^2\beta^4 d$ through $c_\beta\|P_z(b)\|^2$ while the two curvature contributions are as for tangent pCN.
\item[(ii)] By Lemma~\ref{lem:coeff_general}, the coefficient of $g$ in \eqref{eq:Rcancel_general} with $c=c_\beta$ and $\sigma=\beta$ is $C_*(\mu)\beta^2+o_p(\beta^2)$, whence
\begin{equation}
  \Rcancel = C_*(\mu)\,\beta^2 g + o_p(1),
  \qquad
  C_*(\mu) := -2\mu - \frac{s}{4},
  \qquad\text{and}\qquad
  \Lambda(\mu) = -\tfrac12 C_*(\mu)\,\tau^2 .
  \label{eq:Cstar_mu}
\end{equation}
\item[(iii)] Consequently, if $C_*(\mu)\neq 0$, the conclusions of Theorem~\ref{thm:gaussian_limit}, Corollary~\ref{cor:acceptance_limit} and Theorem~\ref{thm:ESJD_limit} hold with
\begin{equation}
  \sigma_\ell(\mu) = |C_*(\mu)|\,\tau\,\ell^3,
  \label{eq:sigmaell_mu_def}
\end{equation}
so the limiting ESJD-optimal acceptance probability is $2\Phi(-x_*)\approx 0.574$ for every member of the family, attained at $\ell_*(\mu) = \bigl(2x_*/(|C_*(\mu)|\tau)\bigr)^{1/3}$.
\end{enumerate}
\end{prop}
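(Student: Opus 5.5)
The plan is to run the tangent pCN argument of Section~\ref{sec:expans-at-crit} again, keeping $c_\beta$ general. The appendix lemmas supply the two key expansions, so what remains is mostly bookkeeping of the $\beta^2$ coefficients.

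\textbf{Part (i).} I would invoke Lemma~\ref{lem:g_refined}(i) with $c_\beta=-\tfrac12\beta^2+\mu\beta^4+o(\beta^4)$. The main check is that the deterministic $\beta^4 d$ term in $g=\langle v,b\rangle+\alpha\langle\nhat,b\rangle$ has the stated coefficient. Conditionally on $z$, $\langle v,b\rangle$ has mean $c_\beta\|P_z(b)\|^2=c_\beta\tau^2 d+\dots$. The $-\tfrac12\beta^2\tau^2 d$ part cancels against the leading term of $\alpha\langle\nhat,b\rangle\approx-\tfrac{\|v\|^2}{2r}\,\delta t^*\sqrt d$; this cancellation is exactly the stationarity relation defining $t^*$ and is the same as in Lemma~\ref{lem:g_cancellation}. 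The $\mu\beta^4\tau^2 d$ part survives.

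The remaining $\beta^4 d$ contributions do not depend on $\mu$: they come from the $\beta^4$ correction of $\|v\|^2$, namely $c_\beta^2\|P_z(b)\|^2\approx\tfrac14\beta^4\tau^2 d$, and from the quartic term $-\|v\|^4/(8r^3)$ of $\alpha$. So they coincide with the tangent pCN values. Setting $\mu=-1/8$ must reproduce $\Lambda=\tfrac18\tau^2(s-1)$, which fixes the $\mu$-independent part to $\tau^2 s/8$ and gives $\Lambda(\mu)=\tau^2(\mu+s/8)$.

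\textbf{Part (ii).} I would apply Lemma~\ref{lem:coeff_general} to the coefficient of $g$ in \eqref{eq:Rcancel_general} with $\sigma=\beta$, expanding each term in turn:
\begin{itemize}
\item $-1-2c_\beta/\beta^2=-2\mu\beta^2+o(\beta^2)$;
\item $-(c_\beta/\beta^2)\,\alpha/r\approx\tfrac12\cdot(-\|v\|^2/(2r^2))=-\beta^2/(4\kappa^2)+o_p(\beta^2)$, using $\|v\|^2=\beta^2 d(1+o_p(1))$;
\item $(c_\beta^2/\beta^2)\langle\nhat,b\rangle/r=\tfrac14\beta^2\delta t^*/\kappa+o_p(\beta^2)$.
\end{itemize}
Using the defining equation of $t^*$, $\delta\kappa (t^*)^2+t^*-\delta\kappa=0$, one gets $\delta t^*/\kappa=-\tau^2$; this is the tilt term already identified for pCN. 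Summing the three gives $C_*(\mu)=-2\mu-\tfrac{1}{4\kappa^2}-\tfrac{\tau^2}{4}=-2\mu-s/4$.

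The quadratic term $\tfrac{c_\beta^2}{2\beta^2}g^2/r^2=O_p(\beta^4)$ is negligible. Together with $g=O_p(\beta\sqrt d)$ this gives $\Rcancel=C_*(\mu)\beta^2 g+o_p(1)$. The identity $\Lambda(\mu)=-\tfrac12C_*(\mu)\tau^2$ is then immediate algebra.

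\textbf{Part (iii).} I would substitute \eqref{eq:g_critical_final} with $\Lambda(\mu)$ into $\Rcancel=C_*(\mu)\beta^2 g+o_p(1)$, exactly as in \eqref{eq:Rcancel_limit_expansion}. This yields $C_*(\mu)\tau\ell^3\xi_1-\tfrac12C_*(\mu)^2\tau^2\ell^6+o_p(1)$, so the proofs of Theorem~\ref{thm:gaussian_limit} and Corollary~\ref{cor:acceptance_limit} go through verbatim.

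For Theorem~\ref{thm:ESJD_limit}, Lemma~\ref{lem:v2_moment} only used $c_\beta^2=O(\beta^4)$, which still holds, so $M_d=\ell^2d^{2/3}(1+o(1))$ and the factorisation lemma is unchanged. The function to maximise is again $x^{2/3}\Phi(-x)$ with $x=\sigma_\ell(\mu)/2$. Hence the optimal acceptance is $2\Phi(-x_*)$ independently of $\mu$, attained at the stated $\ell_*(\mu)$.

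\textbf{Main obstacle.} The delicate step is part (i): confirming that no other $\mu$-dependence enters at order $\beta^4 d$, and that the remainders stay $o_p(d^{1/3})$ uniformly in the family. I would handle this by checking that in Lemma~\ref{lem:g_refined} the $\mu$-dependence enters linearly, only through the drift mean. Any $\mu$ contribution via $\|v\|^2$ or $\alpha$ is $O(\beta^6 d)$ and therefore negligible.
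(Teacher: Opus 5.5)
Your proposal is correct and takes essentially the same route as the paper, which likewise cites Lemma~\ref{lem:g_refined}(i) for $\Lambda(\mu)$ and Lemma~\ref{lem:coeff_general} for the coefficient of $g$, then repeats Sections~\ref{sec:expans-at-crit}--\ref{sec:cons-scal-optim} verbatim. Your calibration at $\mu=-1/8$ is harmless but unnecessary, since Lemma~\ref{lem:g_refined}(i) already gives $\Lambda(\mu)=\mu\tau^2+\tau^4/8+\tau^2/(8\kappa^2)$ directly; also note a sign slip, as $t^*$ solves $\delta\kappa(t^*)^2-t^*-\delta\kappa=0$ rather than $\delta\kappa(t^*)^2+t^*-\delta\kappa=0$, though your conclusion $\delta t^*/\kappa=-\tau^2$ is correct.
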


\begin{pf}
The expansion of $g$ follows from Lemma~\ref{lem:g_refined}(i), while the coefficient expansion in the defect follows from Lemma~\ref{lem:coeff_general}. Together these give the identities in \eqref{eq:Cstar_mu}. The remainder of the argument is verbatim Sections~\ref{sec:expans-at-crit}--\ref{sec:cons-scal-optim}.
\end{pf}

\begin{cor}[Tangent MALA]
\label{cor:tangent_mala}
Tangent MALA is the member $\mu=0$, with
\begin{equation}
  C_*^{\mathrm{MALA}} = -\frac{s}{4} < 0
  \qquad\text{for all }\delta,\kappa>0 .
\end{equation}
Hence tangent MALA obeys the critical scaling $\beta=\ell d^{-1/6}$ with $\sigma_\ell(0)=(s/4)\,\tau\ell^3$ by \eqref{eq:sigmaell_mu_def}, and has limiting ESJD-optimal acceptance $\approx 0.574$, and the exceptional cancellation $C_*^{\mathrm{MALA}}=0$ never occurs for tangent MALA.
\end{cor}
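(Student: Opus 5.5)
The plan is to read Corollary~\ref{cor:tangent_mala} off from Proposition~\ref{prop:drift_family_scaling}, after first checking that tangent MALA really belongs to the drift family with $\mu=0$. Tangent MALA uses $c_\beta=-\beta^2/2$ exactly, so in the expansion $c_\beta=-\tfrac12\beta^2+\mu\beta^4+o(\beta^4)$ we have $\mu=0$ and the $o(\beta^4)$ remainder vanishes identically. The proposal has noise scale $\sigma=\beta$, as the family \eqref{eq:drift_family} requires. Proposition~\ref{prop:drift_family_scaling}(ii) then gives $C_*(0)=-2\cdot 0-s/4=-s/4$, and with it $\Lambda(0)=\tau^2 s/8=-\tfrac12 C_*(0)\tau^2$, consistent with part (i).

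Next I show $C_*^{\mathrm{MALA}}<0$ for all $\delta,\kappa>0$, which reduces to $s>0$. By \eqref{eq:s_def}, $s=\tau^2+\kappa^{-2}$, and $\kappa^{-2}>0$ while $\tau^2\ge0$, so $s>0$ and $C_*^{\mathrm{MALA}}=-s/4<0$. For the later steps I also need $\tau>0$. Lemma~\ref{lem:concentration_t} places $t^*\in(-1,0)$, so $1-(t^*)^2>0$, and since $\delta>0$ we get $\tau=\delta\sqrt{1-(t^*)^2}>0$.

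With $C_*(0)\neq0$ established, the nondegeneracy hypothesis of Proposition~\ref{prop:drift_family_scaling}(iii) holds for every $\delta,\kappa>0$, so there is no exceptional curve. Part (iii) then transfers Theorem~\ref{thm:gaussian_limit}, Corollary~\ref{cor:acceptance_limit} and Theorem~\ref{thm:ESJD_limit} to this case, with
\[
\sigma_\ell(0)=|C_*(0)|\,\tau\,\ell^3=(s/4)\,\tau\ell^3 .
\]
This gives the critical scaling $\beta=\ell d^{-1/6}$. The ESJD optimisation is the same one-variable problem: maximise $x^{2/3}\Phi(-x)$ with $x=\sigma_\ell/2$. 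Its solution $x_*$ does not depend on the constant $|C_*|\tau$, so the optimal acceptance is $2\Phi(-x_*)\approx0.574$, attained at $\ell_*(0)=(8x_*/(s\tau))^{1/3}$.

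Almost nothing here is hard, since the real work sits in Lemmas~\ref{lem:g_refined} and~\ref{lem:coeff_general}, which the proposition already invokes. The one point to check is that those lemmas accept an exact drift with zero $o(\beta^4)$ remainder. They do, because the zero remainder is a special case of the allowed one. A second check is that strict positivity of $\tau$ is what keeps $\sigma_\ell(0)$ nonzero; otherwise the Gaussian limit would degenerate even though $C_*\ne0$.
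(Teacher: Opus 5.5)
Your proposal is correct and follows the paper's route: the corollary is read off from Proposition~\ref{prop:drift_family_scaling} at $\mu=0$. Since $c_\beta=-\beta^2/2$ exactly, the remainder vanishes and $|c_\beta|\le\beta$ as Lemma~\ref{lem:good_event} requires. This gives $C_*(0)=-s/4<0$ because $s=\tau^2+\kappa^{-2}>0$. Part (iii) then yields $\sigma_\ell(0)=(s/4)\tau\ell^3$ and the optimum $2\Phi(-x_*)\approx 0.574$, which does not depend on $|C_*|\tau$. Your check that $\tau>0$, via $t^*\in(-1,0)$, is the same nondegeneracy the paper uses in the proof of Theorem~\ref{thm:gaussian_limit}.
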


Note the structural difference from tangent pCN: for MALA the flat-space coefficient $-1-2c_\beta/\beta^2$ vanishes exactly (the analogue of the remnant $(1-\rho)/(1+\rho)$ in \eqref{eq:Rcancel_exact} is identically zero), so $C_*^{\mathrm{MALA}}$ consists purely of the two curvature terms. For pCN, $\mu=-1/8$ gives $C_* = (1-s)/4$, recovering \eqref{eq:Cstar_def}: the flat-space remnant $+\tfrac14$ partially offsets the curvature contribution $-s/4$.

\begin{cor}[Tangent RWMH]
\label{cor:tangent_rw}
For $c_\beta\equiv 0$ (RWMH) the proposal log-ratio vanishes identically, so $\Rcancel=-g$ exactly. By Lemma~\ref{lem:g_refined}(ii), the $O(\beta^2 d)$ cancellation present for tangent pCN does not occur, and instead $g = \beta\tau\sqrt d\,\xi_1 + \tfrac12\tau^2\beta^2 d\,(1+o_p(1))$. Hence with $\beta=\ell d^{-1/2}$, $\ell>0$ fixed,
\begin{equation}
  \Rcancel \myrightarrow{d} \N\Bigl(-\tfrac12\tau^2\ell^2,\ \tau^2\ell^2\Bigr),
  \qquad
  a_d(\ell d^{-1/2})\to 2\Phi\Bigl(-\frac{\tau\ell}{2}\Bigr),
  \qquad
  \mathrm{ESJD}_d(\ell)\to \ell^2\cdot 2\Phi\Bigl(-\frac{\tau\ell}{2}\Bigr),
\end{equation}
with ESJD-optimal acceptance $2\Phi(-x_*^{\mathrm{RW}})\approx 0.234$, where $x_*^{\mathrm{RW}}\approx 1.1906$ solves $2\Phi(-x)=x\phi(x)$, attained at $\ell_* = 2x_*^{\mathrm{RW}}/\tau$.
\end{cor}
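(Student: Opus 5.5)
Since $c_\beta\equiv0$, I will first read off from \eqref{eq:Rcancel_general} with $c=0$ and $\sigma=\beta$ that every term except $-g$ vanishes, so $\Rcancel=-g$ exactly on $\{\|v\|<r\}$ (and $-\infty$ otherwise). Equivalently, \eqref{eq:Rprop_decomp} is identically zero because the proposal is symmetric. The whole statement is therefore a statement about the law of $g$ under stationarity at the scale $\beta=\ell d^{-1/2}$.

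Next I will expand $g$ via \eqref{eq:scalar_reduction} with $\gamma$ replaced by $0$. This gives $g=\beta\xi_1\|P_z(b)\|+\alpha\langle\nhat,b\rangle$ with $\|v\|^2=\beta^2(\xi_1^2+W)$. Using Proposition~\ref{prop:sphere_backprojection}(i), $\alpha=-r+\sqrt{r^2-\|v\|^2}=-\|v\|^2/(2r)+O(\|v\|^4/r^3)$. By the law of large numbers for $W\sim\chi^2_{d-2}$, $\|v\|^2=\beta^2 d(1+o_p(1))$. Combining this with \eqref{eq:nb_expansion} and $r=\kappa\sqrt d$ yields
\[
\alpha\langle\nhat,b\rangle=-\frac{\beta^2 d}{2\kappa}\,\delta t^*\,(1+o_p(1)).
\]
Together with \eqref{eq:P_expansion}, the first term is $\beta\tau\sqrt d\,\xi_1+O_p(\beta)$; this is the content of Lemma~\ref{lem:g_refined}(ii), which I will invoke. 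The stated constant $\tfrac12\tau^2$ requires the identity $-\delta t^*/\kappa=\tau^2$. I will check it from the fixed-point equation defining $t^*$: $\delta\kappa (t^*)^2+t^*-\delta\kappa=0$ gives $1-(t^*)^2=t^*/(\delta\kappa)$, hence $\tau^2=\delta^2(1-(t^*)^2)=\delta t^*/\kappa$. This has the wrong sign as written, so I will recheck the sign convention. In fact $t^*<0$ and $\tau^2>0$ force $\tau^2=-\delta t^*/\kappa$, and the correct rearrangement of $\delta\kappa(1-t^2)=-t$ is exactly that. Hence $g=\beta\tau\sqrt d\,\xi_1+\tfrac12\tau^2\beta^2 d+o_p(1)$ at $\beta=\ell d^{-1/2}$, where the errors $O_p(\beta)$ and $\beta^2\sqrt d\,\zeta$-type terms (from $\zeta=O_p(1)$ in \eqref{eq:nb_expansion}) vanish.

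Substituting $\beta=\ell d^{-1/2}$ then gives
\[
\Rcancel=-\tau\ell\xi_1-\tfrac12\tau^2\ell^2+o_p(1).
\]
Since $\xi_1\sim\N(0,1)$ exactly, $-\tau\ell\xi_1$ is centred Gaussian with variance $\tau^2\ell^2$, and Slutsky gives the stated limit $\N(-\tfrac12\tau^2\ell^2,\tau^2\ell^2)$. The event $\{\|v\|\ge r\}$ has probability tending to zero because $\|v\|^2\approx\ell^2\ll\kappa^2 d$, so the $-\infty$ convention does not affect the limit. The acceptance limit follows exactly as in Corollary~\ref{cor:acceptance_limit}, with $\sigma=\tau\ell$.

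For the ESJD I will repeat the proof of Theorem~\ref{thm:ESJD_limit}. The analogue of Lemma~\ref{lem:v2_moment} is simpler here: $\E[\|v\|^2\mid z]=\beta^2(d-1)=\ell^2(1+o(1))$ and $\Var\|v\|^2=2\beta^4(d-1)=O(d^{-1})$, so Lemma~\ref{lem:factorisation} carries over with $M_d\to\ell^2$. The geodesic correction of Lemma~\ref{lem:geodesic_identity} is $O(\E\|v\|^4/r^2)=O(d^{-1})$. No normalisation in $d$ is therefore needed, and $\mathrm{ESJD}_d\to\ell^2\cdot2\Phi(-\tau\ell/2)$. Finally, with $x=\tau\ell/2$, maximising $x^2\Phi(-x)$ gives the first-order condition $2\Phi(-x)=x\phi(x)$. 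Uniqueness of its positive root follows from monotonicity of $x\phi(x)/\Phi(-x)$, as argued after \eqref{eq:xstar_eq}. This yields $x_*^{\mathrm{RW}}$, $\ell_*=2x_*^{\mathrm{RW}}/\tau$, and acceptance $\approx0.234$.

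The main obstacle is the deterministic drift of $g$ at order $\beta^2 d$, specifically pinning its constant to $\tfrac12\tau^2$. This needs both the precise leading behaviour of $\alpha$ and the correct sign in the $t^*$ identity, plus control of the $\zeta$- and $W$-fluctuation corrections, which is where Lemma~\ref{lem:g_refined}(ii) carries the weight.
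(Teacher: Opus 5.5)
Your proposal is correct and follows essentially the same route as the paper. It takes $\Rcancel=-g$ from \eqref{eq:Rcancel_general} with $c=0$, expands $g$ via Lemma~\ref{lem:g_refined}(ii) with $\tau^2=-\delta t^*/\kappa$ (which you recover correctly after catching your initial sign slip in the quadratic for $t^*$), uses Slutsky and bounded convergence for the acceptance limit, and reruns the proof of Theorem~\ref{thm:ESJD_limit} with $M_d\to\ell^2$, so no $d$-normalisation is needed, before optimising $x^2\Phi(-x)$ to obtain $x_*^{\mathrm{RW}}$, $\ell_*$ and the $0.234$ acceptance.
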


By \eqref{eq:Cstar_mu}, $|C_*^{\mathrm{pCN}}| < |C_*^{\mathrm{MALA}}|$ iff $|1-s|<s$ iff $s>\tfrac12$, which is equivalent to $\kappa^2 < 4\delta^2+2$. Therefore, tangent pCN dominates tangent MALA precisely in a curvature dominated regime whereas tangent MALA dominates in a near-flat regime $\kappa^2 > 4\delta ^2 + 2$; the asymptotic ESJD ratio at the respective optimal tunings is $(s/|1-s|)^{2/3}$. At $\delta=\kappa=1$ one has $C_*^{\mathrm{MALA}}\approx-0.4045$ versus $C_*^{\mathrm{pCN}}\approx-0.1545$ and optimal tunings $\ell_*^{\mathrm{MALA}}\approx 1.52$ versus $\ell_*^{\mathrm{pCN}}\approx 2.10$.

\subsection{A joint diffusion limit}
\label{sec:diffusion-limit}

The ESJD criterion of Theorem~\ref{thm:ESJD_limit} is a useful surrogate for efficiency as it measures the total squared displacement. To relate this criterion to the dynamics of individual observables, we prove a joint diffusion limit for the longitudinal fluctuation and transverse coordinate for tangent pCN. We show here that both evolve on the $d^{1/3}$ time scale, but their limiting speeds have different optimal tunings.

For a metric space $E$, let $\mathcal B(E)$ denote its Borel $\sigma$-algebra and $\mathcal P(E)$ the set of Borel probability measures on $E$. We write $D([0,\infty),\RR^n)$ for the space of c\`adl\`ag paths equipped with the Skorokhod $J_1$ topology \citep{ethier-kurtz:2009}. Weak convergence in this space is denoted by $\Rightarrow$.

Let $(z_k)_{k\ge0}$ be the tangent-pCN chain and recall $t(z)$ from \eqref{eq:t_def}. For each $d$, let us choose a deterministic unit vector $e_{\perp} \perp b$ and set
\begin{equation}
    X_k:=\langle z_k-b,e_{\perp}\rangle.
\label{eq:Xk_def}
\end{equation}

We will call $t$ and its centred fluctuation $\zeta$ \emph{longitudinal} since $t$ is the normalised coordinate along $b$ and we will call $X$ \emph{transverse} since $e_{\perp}$ is orthogonal to $b$. Of course, the choice of $e_{\perp}$ is arbitrary by rotational symmetry about $b$.

\begin{lem}[Markovianity]
\label{lem:scalar_markovianity}
The sequences $t_k:=t(z_k)$ and $(t_k,X_k)$ are time-homogeneous Markov chains. Their transition kernels preserve the corresponding images of $\pi_d$; in particular, the scalar kernel preserves \eqref{eq:t_density}. The same statements hold for the intrinsic proposal \eqref{eq:intrinsic_proposal}.
\end{lem}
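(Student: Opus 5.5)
We plan to use a symmetry-lumping (Dynkin) criterion. Let $K$ be the full Metropolis--Hastings kernel of tangent pCN on $\Mc=S^{d-1}(b,r)$. Let $G$ be the group of orthogonal maps $O$ of $\RR^d$ with $Ob=b$, acting on $\Mc$ by $z\mapsto b+O(z-b)$. For the pair $(t,X)$ we use the subgroup $G_\perp\subset G$ that also fixes $e_\perp$. The argument has three steps.

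\emph{Step 1: equivariance.} We first check that $K(Oz,O\,\cdot)=K(z,\cdot)$ for all $O\in G$, meaning that the kernel of the transported state is the image of the original kernel under the action. Write $\nhat_{Oz}=O\nhat_z$ and $P_{Oz}=OP_zO^\top$. Then $P_{Oz}(b)=OP_z(b)$, using $Ob=b$, and $P_{Oz}\xi$ has the same law as $OP_z\xi$, because $O^\top\xi\sim\N(0,I_d)$. So the tangent increment at $Oz$ has the law of $Ov$. By Proposition~\ref{prop:sphere_backprojection}, the projected point is $b+Ov+\sqrt{r^2-\|v\|^2}\,O\nhat$, which is the image of $z'$. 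The defect \eqref{eq:Rcancel_general} depends only on $g$, $\alpha$, $\langle\nhat,b\rangle$ and $r$, and each of these is invariant because $\langle Ov,b\rangle=\langle v,O^\top b\rangle=\langle v,b\rangle$. Hence the acceptance probability is invariant, the rejection mass sits at the image point, and the claimed equivariance holds. For the intrinsic proposal \eqref{eq:intrinsic_proposal} we use the same argument. That proposal presumably only changes the drift coefficient and noise scale through quantities built from $b$, $\nhat$ and scalars such as $\theta$, so it is equally $G$-equivariant. Its defect is again \eqref{eq:Rcancel_general} with other constants $c,\sigma$.

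\emph{Step 2: lumpability.} The map $z\mapsto t(z)$ is a maximal invariant for $G$: two points with the same $t$ satisfy $\langle\nhat,\hat b\rangle=\langle\nhat_2,\hat b\rangle$, so their normals differ by a rotation fixing $\hat b$. Similarly, $(t,X)$ is a maximal invariant for $G_\perp$. Given $t(z_1)=t(z_2)$, pick $O\in G$ with $z_2=b+O(z_1-b)$. Then $K(z_2,t^{-1}(A))=K(z_1,O^{-1}t^{-1}(A))=K(z_1,t^{-1}(A))$, since $t\circ O=t$. So $K(z,t^{-1}(A))$ depends on $z$ only through $t(z)$, and it defines a kernel $\bar K(t,A)$. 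Dynkin's criterion then gives that $t_k$ is a time-homogeneous Markov chain with kernel $\bar K$. The same argument with $G_\perp$ handles $(t_k,X_k)$.

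\emph{Step 3: invariance.} The Metropolis--Hastings kernel $K$ preserves $\pi_d$, and the projection Jacobians cancel as in \citet{zappa-holmes-cerfon-goodman:2018}. Therefore $\pi_d\circ t^{-1}$ is preserved by $\bar K$. This follows from $\int\pi_d(dz)K(z,t^{-1}A)=\pi_d(t^{-1}A)$ together with the fact that $K(z,t^{-1}A)$ is a function of $t(z)$. By \eqref{eq:t_density}, this image measure is the law with density $p(t)$. The pair $(t,X)$ is handled the same way.

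\emph{Main obstacle.} The only delicate point is the maximal-invariant and transitivity claim at the boundary values $t=\pm1$, where the $G$-orbits degenerate. These are single points, and the statement there is trivially true. Boundary points with $|X|$ at its extreme are handled the same way. The sets $t=\pm1$ have $\pi_d$-measure zero in any case, so if a careful treatment of those points becomes fiddly, we can restrict the claims to a full-measure invariant set.
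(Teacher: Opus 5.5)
Your argument is correct and essentially the paper's: equivariance of proposal, projection and acceptance under orthogonal maps fixing $b$ (and $e_\perp$), transitivity on the level sets of $t$ (resp.\ $(t,X)$), and pushing forward the $\pi_d$-invariance. The only difference is that the paper gets the scalar chain directly from the scalar reduction, since the update and acceptance are measurable functions of $(t(z),\xi_1,W)$ with $z$-independent noise law, while you use the symmetry argument for both chains, which also avoids the undefined direction $P_z(b)/\|P_z(b)\|$ at $t=\pm1$. Your hedge on the intrinsic proposal is unnecessary: \eqref{eq:intrinsic_proposal} is exactly \eqref{eq:general_proposal} with $c=\gamma/s$ and $\sigma=\beta/\sqrt s$, so your Step~1 covers it verbatim.
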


\begin{pf}
The scalar proposal and acceptance probability are measurable functions of $(t(z),\xi_1,W)$ and the law of $(\xi_1,W)$ does not depend on $z$. At $t=\pm1$, the choice of the tangent basis is irrelevant. For the pair, orthogonal transformations fixing both $b$ and $e_{\perp}$ act transitively on each set of states with a given $(t,X)$, and the proposal, projection and acceptance are equivariant under these transformations. Thus each projected transition law depends only on the corresponding projected state. Preservation of the image laws follows from invariance of $\pi_d$, its image under $z \mapsto t(z)$ preserves the density \eqref{eq:t_density}. The same argument applies after the constant rescaling of drift and noise in \eqref{eq:intrinsic_proposal}.
\end{pf}

Recall $\zeta=\sqrt d(t-t^*)$ from \eqref{eq:zeta_def} and note that, on a successful projection, Proposition~\ref{prop:sphere_backprojection}(ii) gives $t(z')-t(z)=g/(\delta\kappa d)$. If $\mathds 1_{\mathrm{acc}}$ is the acceptance indicator, we have
\begin{equation}
 \Delta\zeta:=\zeta_{k+1}-\zeta_k
 =\mathds{1}_{\mathrm{acc}}\frac{g}{\delta\kappa\sqrt d}.
 \label{eq:delta_zeta}
\end{equation}
Under the critical scaling $\beta=\ell d^{-1/6}$, \eqref{eq:g_critical_final} gives $g=O_p(d^{1/3})$ and hence $\Delta\zeta=O_p(d^{-1/6})$. A one-step variance is therefore of order $d^{-1/3}$, which identifies the accelerated process
\begin{equation}
 S^{(d)}(u):=\zeta_{\lfloor d^{1/3}u\rfloor},
 \qquad X^{(d)}(u) :=X_{\lfloor d^{1/3}u\rfloor},
 \qquad u\ge0\,,
 \label{eq:Sd_def}
\end{equation}
i.e. the longitudinal coordinate centred at $t^*$ and magnified by $\sqrt d$, and the transverse coordinate, both run at $d^{1/3}$ steps per unit time.

\begin{thm}[Joint diffusion limit]
\label{thm:joint_diffusion_limit}
Fix $\delta,\kappa,\ell>0$, assume $C_*\ne0$, take $\beta=\ell d^{-1/6}$, and start the full chain in stationarity, $z_0\sim\pi_d$. Then, as $d\to\infty$,
\begin{equation}
 (S^{(d)},X^{(d)}) \Rightarrow (S, X)
 \qquad\text{in }D([0,\infty),\RR^2),
 \label{eq:joint_process_convergence}
\end{equation}
where $S$ and $X$ are independent stationary OU processes satisfying

\begin{align}
 dS(u)&=-\frac{h(\ell)}{2\sigma_{t^*}^2}S(u)\,du +\sqrt{h(\ell)}\,dB(u),
 \qquad S(0)\sim\N(0,\sigma_{t^*}^2)\,,
 \label{eq:scalar_OU_limit}\\
 dX(u)&=-\frac{s}{2}h_\perp(\ell)X(u)\,du +\sqrt{h_\perp(\ell)}\,dB_\perp(u),
 \qquad X(0)\sim\N(0,s^{-1}).
 \label{eq:transverse_OU_limit}
\end{align}
Here $B$ and $B_\perp$ are independent Brownian motions in $\RR$. The speed functions are

\begin{equation}
    h(\ell)=\left(\frac{\tau}{\delta\kappa}\right)^2\ell^2K(\sigma_\ell)\,,
    \label{eq:h_diffusion}
\end{equation}
and 
\begin{equation}
 h_\perp(\ell)=\ell^2\,2\Phi(-\sigma_\ell/2)=\mathcal E(\ell),
 \label{eq:h_transverse}
\end{equation}
with $\sigma_\ell$ given in \eqref{eq:sigmaell_def} and
\begin{equation}
K(\sigma) =\left(2+\frac{\sigma^2}{2}\right)\Phi(-\sigma/2) -\sigma\phi(\sigma/2)\,.
 \label{eq:K_diffusion}
\end{equation}
\end{thm}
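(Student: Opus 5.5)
We will prove the joint diffusion limit by the generator approach for Markov chains started in stationarity (Ethier--Kurtz, as in \cite{roberts-rosenthal:1998}). By Lemma~\ref{lem:scalar_markovianity}, $(\zeta_k,X_k)$ is a time-homogeneous Markov chain preserving the image of $\pi_d$. By Lemma~\ref{lem:concentration_t} and the rotational symmetry about $b$, this stationary image converges to $\N(0,\sigma_{t^*}^2)\otimes\N(0,s^{-1})$. For the transverse marginal, $X=r\langle\nhat,e_\perp\rangle$, and given $t$ the direction $P_{\hat b}\nhat$ is uniform on the sphere orthogonal to $b$. Hence $X\approx\kappa\sqrt{1-t^2}\,\N(0,1)$, and one checks $\kappa^2(1-(t^*)^2)=1/s$ from the fixed-point equation for $t^*$. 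With tightness supplied by stationarity, it then suffices to prove
\[
\sup_{(\zeta,X)\in K_d}\bigl|d^{1/3}\,\E[f(\zeta_1,X_1)-f(\zeta,X)\mid \zeta_0=\zeta,X_0=X]-\mathcal G f(\zeta,X)\bigr|\to0
\]
for $f\in C_c^\infty(\RR^2)$. Here $\mathcal G$ is the generator of the product OU process, and $K_d$ are sets of stationary probability tending to one. Corollary 8.9 of Chapter 4 in \cite{ethier-kurtz:2009} then gives convergence of finite-dimensional distributions and weak convergence in $D$.

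To compute the generator, Taylor-expand $f$ to second order. The increments are $\Delta\zeta=\mathds 1_{\mathrm{acc}}g/(\delta\kappa\sqrt d)$ from \eqref{eq:delta_zeta}, and $\Delta X=\mathds 1_{\mathrm{acc}}\langle v+\alpha\nhat,e_\perp\rangle$.

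For the longitudinal part, insert \eqref{eq:g_critical_final} conditionally on $t$, keeping the $\zeta$-dependence. The fluctuation is $\beta\tau\sqrt d\,\xi_1=\ell\tau d^{1/3}\xi_1$. The acceptance is asymptotically $1\wedge e^{R}$ with $R=C_*\tau\ell^3\xi_1-\sigma_\ell^2/2+o_p(1)$ by \eqref{eq:Rcancel_limit_expansion}. The second moment therefore equals $d^{-1/3}(\tau\ell/\delta\kappa)^2\,\E[\xi_1^2(1\wedge e^{\sigma_\ell Z-\sigma_\ell^2/2})]$. A standard Gaussian computation gives $\E[\xi_1^2(1\wedge e^{\sigma\xi_1-\sigma^2/2})]=K(\sigma)$, which yields $h(\ell)$.

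The drift requires more care, because the $O(d^{-1/6})$ mean terms must cancel and leave an $O(d^{-1/3})$ restoring term $-\tfrac{h}{2\sigma_{t^*}^2}\zeta$. Rather than expanding the drift directly, I would obtain it from reversibility. The limit's second-order coefficient is $h$, and the invariant law is $\N(0,\sigma_{t^*}^2)$. Since a reversible limit generator is determined by its diffusion coefficient and invariant density, the drift must be $\tfrac h2(\log p)'$. To make this rigorous, one either verifies it through $\E[\Delta\zeta\mid\zeta]$ using the $O(1)$ correction $\delta\zeta$ in \eqref{eq:nb_expansion} together with the $\zeta$-dependence of $\|P_z(b)\|$, which tilts $g$ by a term linear in $\zeta$ of order $d^{-1/6}\cdot d^{-1/6}$; or one identifies the limit generator only up to its drift and then pins the drift down by the fact that stationarity of $\N(0,\sigma_{t^*}^2)$ is preserved. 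I expect this drift bookkeeping, including uniform control of the Lemma~\ref{lem:g_refined} remainders over $K_d$, to be the main obstacle.

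For the transverse part, $\langle v,e_\perp\rangle=\gamma\langle P_z b,e_\perp\rangle+\beta\langle P_z\xi,e_\perp\rangle$. Its noise component is $\beta\eta$ with $\eta\approx\N(0,1)$ asymptotically independent of $\xi_1$, because $e_\perp$ is nearly orthogonal to $e_1$. The variance is therefore $\beta^2\E[1\wedge e^R]\to d^{-1/3}\ell^2\,2\Phi(-\sigma_\ell/2)=d^{-1/3}h_\perp$, which matches \eqref{eq:h_transverse}. For the drift, $\langle P_zb,e_\perp\rangle=-\langle\nhat,b\rangle X/r$ and $\alpha\langle\nhat,e_\perp\rangle\approx-\tfrac{\|v\|^2}{2r^2}X$. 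Combining these with $\gamma\approx-\beta^2/2$, $\langle\nhat,b\rangle/r\approx\delta t^*/\kappa$ and $\|v\|^2/r^2\approx\beta^2/\kappa^2$ gives $-\tfrac{\beta^2}{2}(\tfrac1{\kappa^2}-\tfrac{\delta t^*}{\kappa})X$, which should equal $-\tfrac{\beta^2}{2}sX$ via the $t^*$ equation. The acceptance indicator is asymptotically independent of $\eta$, so this drift is multiplied by $2\Phi(-\sigma_\ell/2)$, giving $-\tfrac s2h_\perp X$.

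The cross term $\E[\Delta\zeta\Delta X]$ is $O(d^{-1/6}\cdot d^{-1/6})$ times $\E[\xi_1\eta\cdot a]\to0$, so it vanishes at rate $o(d^{-1/3})$; this gives independence of $B$ and $B_\perp$. Third-order Taylor remainders are $O(d^{-1/2})$ and hence negligible.
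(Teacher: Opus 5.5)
Your transverse computations, the initial law and the cross moment agree with the paper. The paper uses the Ethier--Kurtz conditional-moment criterion of Chapter~7 with locally uniform convergence, rather than generator convergence on sets $K_d$. The longitudinal part, however, has a genuine error.

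\textbf{The deterministic part of $g$ cannot be dropped.} At $\beta=\ell d^{-1/6}$, the term $\Lambda\beta^4 d=\Lambda\ell^4 d^{1/3}$ in \eqref{eq:g_critical_final} has the same order as the fluctuation $\ell\tau d^{1/3}\xi_1$. Set $\varepsilon=d^{-1/6}$ and $a:=C_*\tau\ell^3$, so that $|a|=\sigma_\ell$. Then $g=\varepsilon^{-2}g_0+\varepsilon^{-1}g_{1,d}+O(1)$ with $g_0=\ell\tau(\xi_1-a/2)$, and the leading log acceptance ratio is exactly $R_0=a\xi_1-a^2/2=C_*\ell^2 g_0$.

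The Gaussian identity you invoke is false. For $\sigma>0$,
$\E[\xi_1^2(1\wedge e^{\sigma\xi_1-\sigma^2/2})]=(2+\sigma^2)\Phi(-\sigma/2)-\sigma\phi(\sigma/2)=K(\sigma)+\frac{\sigma^2}{2}\Phi(-\sigma/2)$.
What holds is the centred identity $\E[(\xi_1-a/2)^2\mathcal A(R_0)]=K(\sigma_\ell)$, with $\mathcal A(x):=1\wedge e^x$. Only this centred moment gives $h(\ell)$.

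The same omission breaks the order-$d^{-1/6}$ cancellation you say the drift needs. If $g$ is replaced by its fluctuation alone, as in your variance step, then $\E[\xi_1\mathcal A(R_0)]=a\Phi(-\sigma_\ell/2)\neq0$. In that case $d^{1/3}\E_\zeta[\Delta\zeta]$ diverges like $d^{1/6}$. The true cancellation is $\E[(\xi_1-a/2)\mathcal A(R_0)]=0$, i.e.\ $\E[R_0\mathcal A(R_0)]=0$. It holds because the leading longitudinal increment is proportional to the leading log acceptance ratio; both are driven by the single variable $\xi_1$.

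\textbf{The order-$d^{-1/3}$ restoring drift is not established.} Your option (b) is not available. Either form of the Ethier--Kurtz criterion requires you to prove that $d^{1/3}\E_\zeta[\Delta\zeta]$ converges locally uniformly to a continuous limit. You cannot identify the generator up to its drift and then choose the drift so that $\N(0,\sigma_{t^*}^2)$ is invariant: that presupposes the limit is a diffusion with a state-dependent drift, which is exactly what first-moment convergence delivers. A Dirichlet-form argument for reversible chains could bypass the drift, but that is different machinery you have not set up.

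Your option (a) is incomplete, because the $\zeta$-tilt of $g$ at order $\varepsilon^{-1}$ is only half the drift. The acceptance also depends on $\zeta$ at that order: $\Rcancel=R_0+\varepsilon R_{1,d}+R_{2,d}$ with $R_{1,d}=C_*\ell^2 g_{1,d}$. You must therefore expand the kinked function $\mathcal A$ to first order around $R_0$, using $\mathcal A'(x)=e^x\mathds 1_{\{x<0\}}$ and controlling the event $\{|R_0|\le|\Rcancel-R_0|\}$. This produces the extra term $\E[g_0R_{1,d}\mathcal A'(R_0)]$. Only the sum $2\Phi(-\sigma_\ell/2)+a\bigl(\frac a2\Phi(-\sigma_\ell/2)-\mathrm{sgn}(C_*)\phi(\sigma_\ell/2)\bigr)=K(\sigma_\ell)$ yields the drift $-h(\ell)\zeta/(2\sigma_{t^*}^2)$. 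The tilt alone gives $2\Phi(-\sigma_\ell/2)$ in place of $K(\sigma_\ell)$. With diffusion coefficient $h(\ell)$, that would give stationary variance $\sigma_{t^*}^2K(\sigma_\ell)/(2\Phi(-\sigma_\ell/2))\neq\sigma_{t^*}^2$, contradicting the invariant law.

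\textbf{Two smaller points.}
\begin{itemize}
\item In the transverse drift, asymptotic independence of $\eta$ and the acceptance only gives $\beta\E[\eta\,\mathcal A(\Rcancel)]=o(\beta)$, but you need $o(\beta^2)$. The exact symmetry $\xi_2\mapsto-\xi_2$, which leaves $(\xi_1,W)$ and hence $\Rcancel$ invariant, supplies this.
\item The generator criterion needs the path, not just the marginal, to stay in $K_d$ for the first $\lfloor d^{1/3}T\rfloor$ steps with probability tending to one. Under stationarity a union bound gives this if, e.g., $d^{1/3}\pi_d(K_d^{\complement})\to0$; merely $\pi_d(K_d)\to1$ is not enough.
\end{itemize}
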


\begin{pf}
The detailed argument is given in Appendix~\ref{app:scalar_diffusion_proof}.
\end{pf}

\begin{rmk}[Properties of $K$]
\label{rmk:K_properties}
We have $K(0)=1$ and $K'(\sigma)=\sigma\Phi(-\sigma/2)-2\phi(\sigma/2)<0$ for $\sigma\ge0$ by Mills' inequality, so $K$ is strictly decreasing. We also have that $K(\sigma) <2\Phi(-\sigma/2)$ for $\sigma>0$ by the same inequality. Considering the different coordinate scales, the speeds satisfy
\begin{equation}
    \frac{h(\ell)}{(\tau/(\delta\kappa))^2h_\perp(\ell)} =\frac{K(\sigma_\ell)}{2\Phi(-\sigma_\ell/2)}<1.
\end{equation}
The leading longitudinal increment and the log acceptance ratio depend on the same Gaussian variable $\xi_1$ and hence cannot be treated as asymptotically independent. In a transverse direction, the leading noise is independent of $\xi_1$, its odd moments against the acceptance indicator vanish, and its acceptance-weighted second moment converges to the acceptance probability. This gives the factor $K$ in $h$ and the factor $2\Phi(-\sigma_\ell/2)$ in $h_\perp$.
\end{rmk}

Since $h_\perp(\ell)=\mathcal E(\ell)$ (with $\mathcal E(\ell)$ defined in \eqref{eq:ESJD_limit}), Theorem~\ref{thm:joint_diffusion_limit} identifies the ESJD criterion with the diffusion-speed criterion for a transverse observable. Its optimal limiting acceptance probability is $0.574$ by \eqref{eq:astar_ellstar}. In contrast, $h(\ell)$ is maximised at $\ell_h=(\sigma_*/(|C_*|\tau))^{1/3}$, where $\sigma_*\approx0.7378$ solves $(1+\sigma^2)\Phi(-\sigma/2)=2\sigma\phi(\sigma/2)$, giving limiting acceptance probability $2\Phi(-\sigma_*/2)\approx0.712$. For $\delta=\kappa=1$, these tunings are $\ell_*\approx2.10$ and $\ell_h\approx1.825$ respectively. Thus the two acceptance optima describe different observables of the same chain. 

The case of different optima is not unique to the sphere we consider here. For stationary MALA on i.i.d. product targets satisfying the assumptions of \citet{roberts-rosenthal:1998}, the classical $0.574$ optimum concerns the diffusion limit of a fixed coordinate. That coordinate contributes negligibly to the total log acceptance ratio, so its leading proposal noise is asymptotically independent of the acceptance. The speed is then the squared step scale multiplied by the limiting acceptance probability, as for our transverse coordinate. Product structure does not give this decoupling for every observable as even for a product target, all coordinates share one Metropolis decision at each step. In the present model, the longitudinal fluctuation is also a fluctuation of the total energy since
\begin{equation}
    \frac{\|z\|^2-d\bigl(\delta^2+\kappa^2+2\delta\kappa t^*\bigr)}{2\sqrt d}= \delta\kappa\,\zeta.
\label{eq:longitudinal_energy}
\end{equation}
Such observables can retain the acceptance coupling even for product targets. For example for MALA on a standard Gaussian target, where the log acceptance ratio is a multiple of the energy increment, it can be shown in the same way that the speed of the energy is maximised at acceptance probability $0.712$, while each fixed coordinate keeps the optimum $0.574$.  

We have that the joint limit establishes the clock for the longitudinal fluctuation and every deterministic transverse projection. It is a path-space limit on fixed time intervals for a chain started in stationarity. It is important to note that it does not establish convergence of the entire state in some common space or provide a mixing-time bound for the entire chain, which is left for future work.

\section{Curvature correction}
\label{sec:geom-aware-corr}

The analysis of Section~\ref{sec:optim-scal-shift} shows that standard tangent-space pCN on the shifted sphere requires a step size $\beta = O(d^{-1/6})$ to maintain a nondegenerate stationary acceptance probability. This analysis used an ambient reference $\theta=1$ and identified a coefficient $C_*$ governing the leading acceptance defect. We now examine how this coefficient depends on the choice of the reference precision $\theta$ in \eqref{eq:h_reference_measure}.

For fixed $\theta>0$, tangent pCN with precision $\theta$ has noise scale $\beta/\sqrt \theta$ and drift coefficient
\begin{equation}
  \frac{\gamma}{\theta} =-\frac12\left(\frac{\beta}{\sqrt \theta}\right)^2
   -\frac \theta8\left(\frac{\beta}{\sqrt \theta}\right)^4 +O(\beta^6).
\label{eq:c_eta_h}
\end{equation}
Comparing with the expansion in Proposition~\ref{prop:drift_family_scaling} gives $\mu=-\theta/8$ and hence $C_*(-\theta/8)= (\theta-s) /4$. Thus $\theta=s$ is the unique reference precision for which the leading coefficient vanishes.

\subsection{The intrinsic precision}
\label{sec:intrinsic-precision}

We start with a statistical interpretation of $s$ and then show the link to the geometry of the manifold. For the unit vector $e_{\perp}$ of Section~\ref{sec:diffusion-limit}, conditional rotational symmetry and Lemma~\ref{lem:concentration_t} give
\begin{equation}
  \Var_{\pi_d}\bigl(\langle z-b,e_{\perp}\rangle\bigr) =\frac{r^2}{d-1}\,\E_{\pi_d}[1-t^2]
  \myrightarrow{} \kappa^2\bigl(1-(t^*)^2\bigr)=\frac1s.
  \label{eq:s_two_forms}
\end{equation}
The last equality follows from $\delta\kappa(1-(t^*)^2)=-t^*$ and the definition of $s$. Since the reference Gaussian $\mu_\theta$ \eqref{eq:h_reference_measure} has variance $\theta^{-1}$ in these directions, choosing $\theta=s$ matches their limiting variance under the target.

The same quantity also has a geometric interpretation. Let us take $S_z u=D_u\nhat(z)$ for the shape operator. The intrinsic Hessian of a smooth ambient potential $U$ is
\begin{equation}
  \hess_{\Mc}U(z) = P_z\nabla^2U(z)P_z -\langle\nabla U(z),\nhat\rangle S_z.
  \label{eq:intrinsic_hessian}
\end{equation}
On the shifted sphere, $S_z=r^{-1}I$ on the tangent space, $\langle z,\nhat\rangle=r+\langle b,\nhat\rangle$, and the Ricci curvature is $\mathrm{Ric}_{\Mc}=(d-2)r^{-2}P_z$. Consequently, under stationarity,
\begin{equation}
  \hess_{\Mc}\tfrac12\|z\|^2+\mathrm{Ric}_{\Mc}=
  \left(\frac{d-2}{r^2}-\frac{\delta}{\kappa}t\right)P_z = \bigl(s+o_p(1)\bigr)P_z,
\label{eq:s_hess_ricci}
\end{equation}
since $r^2=\kappa^2d$, $t\myrightarrow{p}t^*$, and $-\delta t^*/\kappa=\tau^2$. Thus $s$ is the limiting scalar coefficient of the sum of the intrinsic Hessian and the Ricci curvature on the sphere.

\subsection{The intrinsic proposal}
\label{sec:intrinsic-proposal}

We now take $\theta=s$ and propose
\begin{equation}
v \mid z \;\sim\; \N\bigl( \frac{\gamma}{s}\,P_z(b), \frac{\beta^2}{s} P_z \bigr),
  \label{eq:intrinsic_proposal}
\end{equation}
followed by the sphere projection of Proposition~\ref{prop:sphere_backprojection}. The scalar $s$ depends only on the fixed sphere parameters and can be computed once before running the chain. This is \eqref{eq:general_proposal} with $c=\gamma/s$ and $\sigma=\beta/\sqrt s$ and Proposition~\ref{prop:Rc_sphere} gives on $\{\|v\|<r\}$
\begin{equation}
  \Rcancel =
  \left[\frac{1-\rho}{1+\rho} +\frac{\alpha}{(1+\rho)r}
    +\frac{1-\rho}{1+\rho}\frac{\langle\nhat,b\rangle}{sr}\right]g
  +\frac{1-\rho}{1+\rho}\frac{g^2}{2sr^2}\,
\label{eq:Rcancel_intrinsic}
\end{equation}
where the scalar reduction \eqref{eq:scalar_reduction} trivially applies.

\subsection{A fixed-step limit}
\label{sec:fixed-step}

The choice $\theta=s$ cancels the leading term in the small-step expansion. We now ask whether this allows a fixed step size as the dimension grows and how the accepted chain moves. We use the same longitudinal and transverse coordinates $(\zeta,X)$ as in Section~\ref{sec:diffusion-limit}, now for the chain generated by the intrinsic proposal.

Let us keep $\beta\in(0,1)$ fixed as $d\to\infty$. We use the three scalar statistics $(t,\xi_1,W)$ of Section~\ref{sec:reduct-scal-suff}. Throughout this subsection, $J_s$ denotes the coefficient of $g$ in \eqref{eq:Rcancel_intrinsic}. Set
\begin{equation}
  q:=1+\frac{\gamma}{s\kappa^2}\in(0,1).
\label{eq:q_def}
\end{equation}

\begin{thm}[Fixed-step limit]
\label{thm:fixed_step}
Fix $\delta,\kappa>0$ and $\beta\in(0,1)$. Consider the chain generated by the intrinsic proposal \eqref{eq:intrinsic_proposal}, started in stationarity, $z_0\sim\pi_d$. Set $g=J_s=0$ and $\Rcancel=-\infty$ on projection failure.
\begin{enumerate}
\item[(i)] The projection failure probability tends to zero, and
\begin{equation}
  \frac{g}{\sqrt d}\myrightarrow{d}\bar g,
  \qquad
  \sqrt d\,J_s\myrightarrow{d}\bar J_s,
  \qquad
  \Rcancel\myrightarrow{d}\bar R,
  \label{eq:fixed_step_limit}
\end{equation}
jointly. Here $\zeta\sim\N(0,\sigma_{t^*}^2)$ and $\xi_1,\omega\sim\N(0,1)$ are independent, and
\begin{align}
  \bar g &=
  \delta\kappa\frac{\rho^2-q}{q}\,\zeta +\frac{\rho\beta\tau}{q\sqrt s}\,\xi_1 +\frac{\beta^2\tau^2}{\sqrt2\,q\,s}\,\omega,
\label{eq:gbar}\\
  \bar J_s &=
  \frac1s\left[\frac{1-\rho}{1+\rho}\left(\frac{\delta\rho}{\kappa q}\,\zeta +\frac{\beta\tau}{\sqrt s\,\kappa^2q}\,\xi_1\right) +\frac{\gamma}{\sqrt2\,\kappa^2q}\,\omega \right],
\label{eq:Jbar}\\
  \bar R &=
  \bar J_s\bar g +\frac{1-\rho}{1+\rho} \frac{\bar g^2}{2s\kappa^2}.
\label{eq:Rbar}
\end{align}
Consequently, the stationary acceptance probability satisfies
\begin{equation}
  a_d(\beta)\longrightarrow \E[1 \wedge e^{\bar R}]\in(0,1).
  \label{eq:fixed_step_acceptance}
\end{equation}
\item[(ii)] For every fixed $m\in\mathbb N$, the process $(\zeta_k,X_k)_{0\le k\le m}$ converges in law to the stationary Markov chain whose initial law is given by $(\zeta_0,X_0) \sim \N(0,\sigma_{t^*}^2) \otimes \N(0,s^{-1})$ and the following transition: from the current state $(\zeta_k,X_k)$, draw independent $\xi_1,\xi_2, \omega\sim\N(0,1)$ and $u\sim U(0,1)$. Form $\bar g,\bar J_s,\bar R$ using \eqref{eq:gbar}--\eqref{eq:Rbar} with $\zeta=\zeta_k$, and set
\begin{equation}
  (\zeta_{k+1},X_{k+1}) =
  \begin{cases}
    \displaystyle
    \left(\zeta_k+\frac{\bar g}{\delta\kappa},\rho X_k+\frac{\beta}{\sqrt s}\xi_2 \right),
    & \log u<\bar R,\\[1.5ex]
    (\zeta_k,X_k), & \text{otherwise}.
  \end{cases}
\label{eq:limit_kernel}
\end{equation}
The limiting kernel is reversible with respect to its initial law.
\end{enumerate}
\end{thm}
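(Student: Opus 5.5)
Part (i) reduces to a scalar computation. By the reduction of Section~\ref{sec:reduct-scal-suff}, under stationarity $\Rcancel$ in \eqref{eq:Rcancel_intrinsic} is an explicit function of $(\zeta,\xi_1,W)$ and $(\delta,\kappa,\beta,d)$, now with drift $c=\gamma/s$ and noise $\sigma=\beta/\sqrt s$. The plan is to expand each ingredient to the order needed.

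\emph{Tangent increment.} Write $W=d+\sqrt{2d}\,\omega_d+O_p(1)$ with $\omega_d\myrightarrow{d}\omega\sim\N(0,1)$, independent of $\xi_1$ and $\zeta$. Then $\|v\|^2=(c\|P_z(b)\|+\sigma\xi_1)^2+\sigma^2W$. With $\|P_z(b)\|=\tau\sqrt d+O_p(1)$ this gives
\[
\|v\|^2/d=\gamma^2\tau^2/s^2+\beta^2/s+O_p(d^{-1/2}).
\]
The deterministic limit is strictly less than $\kappa^2$, since $\rho^2=1-\beta^2$, $q\in(0,1)$ and $\gamma^2\tau^2/s^2+\beta^2/s<\kappa^2$ after algebra. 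Hence $\mathbb P(\|v\|\ge r)\to0$, which is the failure claim.

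\emph{Projection scalar and $g$.} Next expand $\alpha=-r+\sqrt{r^2-\|v\|^2}$ to order $d^{-1/2}$ relative accuracy, so that $\alpha$ is known up to $o_p(1)$. Insert this into $g=(c\|P_z(b)\|+\sigma\xi_1)\|P_z(b)\|+\alpha\langle\nhat,b\rangle$.

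The key step, and the main obstacle, is showing that the $O(d)$ deterministic parts of $g$ cancel. The choice $\theta=s$ should make the leading drift term $c\tau^2 d$ and the leading $\alpha\,\delta t^*\sqrt d$ term cancel exactly at $t=t^*$, not merely to leading order in $\beta$. I would verify this using $\delta\kappa(1-t^{*2})=-t^*$ and $-\delta t^*/\kappa=\tau^2$. The nonlinear square root is what produces the factor $q$ and the division by $q$ in \eqref{eq:gbar}. If the exact cancellation fails, the stated $\sqrt d$ scaling would be wrong, so I would first check it symbolically at $\zeta=\xi_1=\omega=0$.

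\emph{Remaining fluctuations of $g$.} The $O_p(\sqrt d)$ part of $g$ collects three contributions. The first is $\zeta$, which enters via $\langle\nhat,b\rangle=\delta t^*\sqrt d+\delta\zeta$ and via the $t$-dependence of $\|P_z(b)\|^2=\delta^2d(1-t^2)$, linearised around $t^*$. The second is $\xi_1$. The third is $\omega$, which enters through $W$ inside $\alpha$. Together these give \eqref{eq:gbar}.

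\emph{The coefficient $J_s$.} In the same way $\sqrt d\,J_s$ is linear in $\zeta,\xi_1,\omega$. This is because the $O(1)$ parts of $\frac{1-\rho}{1+\rho}\bigl(1+\langle\nhat,b\rangle/(sr)\bigr)+\alpha/((1+\rho)r)$ cancel, again by the choice $\theta=s$. Joint convergence then follows from the continuous mapping theorem applied to $(\zeta,\xi_1,\omega_d)$, with $\zeta$ converging by Lemma~\ref{lem:concentration_t}. Substituting into \eqref{eq:Rcancel_intrinsic} gives $\bar R$, using $g^2/r^2\to\bar g^2/\kappa^2$. Acceptance convergence follows by bounded continuity as in Corollary~\ref{cor:acceptance_limit}, together with the vanishing failure probability. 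To show the limit lies strictly in $(0,1)$, note that $\bar R$ is a nondegenerate non-constant quadratic form in Gaussians, so $\mathbb P(\bar R<0)>0$ and $\mathbb P(\bar R>-\infty)=1$.

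For (ii), by Lemma~\ref{lem:scalar_markovianity} the pair $(\zeta_k,X_k)$ is Markov. I would show that its one-step kernel converges: for $(\zeta,X)\to(\zeta_\infty,X_\infty)$ locally uniformly, the transition law converges weakly to \eqref{eq:limit_kernel}.
\begin{itemize}
\item The $\zeta$-update is $\Delta\zeta=\mathds1_{\mathrm{acc}}\,g/(\delta\kappa\sqrt d)$ from \eqref{eq:delta_zeta}.
\item For the $X$-update, decompose $e_\perp$ in the tangent frame. Since $e_\perp\perp b$, its overlap with $e_1$ is $O(d^{-1/2})$, so $\langle v,e_\perp\rangle=\sigma\xi_2+\frac{c}{?}\cdot$ drift. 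More precisely, $\langle v,e_\perp\rangle=-(\gamma/s)X+(\beta/\sqrt s)\xi_2+o_p(1)$ with $\xi_2$ asymptotically independent of $(\xi_1,W)$. The normal part $\alpha\langle\nhat,e_\perp\rangle$ adds $(\alpha/r)X\to(\alpha_\infty/\kappa\sqrt d)X$.
\item Combining these should give the coefficient $\rho$. I would verify this identity explicitly, since it is the second place an exact algebraic miracle is needed.
\end{itemize}
Finite-dimensional convergence then follows by the standard induction over $k\le m$ from stationary initial laws, using $X_0\to\N(0,s^{-1})$ by \eqref{eq:s_two_forms}. Reversibility of the limit kernel is inherited: each $d$-chain is reversible with respect to $\pi_d$, so the pair $(\zeta_0,\zeta_1,X_0,X_1)$ is exchangeable, and exchangeability passes to the weak limit.
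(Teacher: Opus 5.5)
Your overall route is the paper's. You reduce to $(t,\xi_1/\sqrt d,W/d)$ and observe that at $(t^*,0,1)$ one has $\|v\|^2/r^2\to1-q^2$, $\alpha/r\to q-1=\gamma/(s\kappa^2)$, and $g/d=J_s=0$ exactly; your suspicion is right that this exact cancellation forces $\theta=s$. You then apply the delta method, pass the one-step kernel of $(\zeta_k,X_k)$ to the limit locally uniformly, and induct. Reversibility via exchangeability of $(\zeta_0,X_0,\zeta_1,X_1)$ is equivalent to the paper's limit of detailed balance.

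However, the transverse update in (ii) is miscomputed, and with your formula the step fails. Since $\langle b,e_\perp\rangle=0$ and $\langle\nhat,e_\perp\rangle=X/r$, one has $\langle P_z(b),e_\perp\rangle=-\langle\nhat,b\rangle X/r=-\delta tX/\kappa$. So the drift part of $\langle v,e_\perp\rangle$ is $-\gamma\delta tX/(s\kappa)\to(\gamma\tau^2/s)X$, not $-(\gamma/s)X$. With your coefficient, the multiplier of $X$ would tend to $q-\gamma/s=1+\gamma/(s\kappa^2)-\gamma/s$, which equals $\rho=1+\gamma$ only if $\tau^2=-1$; the identity you planned to verify would therefore fail. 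With the correct coefficient it is $1+(q-1)+\gamma\tau^2/s=1+\gamma(\kappa^{-2}+\tau^2)/s=\rho$. This is the second place where $\theta=s$ is used.

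Your argument that the limiting acceptance is strictly below one also does not work. A non-constant quadratic form in Gaussians can be almost surely nonnegative (a square, say), so non-constancy alone does not give $\mathbb P(\bar R<0)>0$; you also did not verify non-constancy. The paper evaluates at $(\zeta,\xi_1,\omega)=(0,0,1)$, where $\bar R=\frac{\gamma\beta^2\tau^2}{2s^2\kappa^2q^2}\bigl(1+\frac{\gamma\tau^2}{2s}\bigr)<0$ because $\gamma<0$ and the bracket exceeds $1/2$. It then uses continuity and full support of the Gaussian law. Alternatively, non-constancy together with $\E[e^{\bar R}]\le1$ would suffice; the latter follows from Fatou applied to $\E_\pi[e^{\Rcancel}]\le1$, which holds for any Metropolis--Hastings ratio under stationarity.

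A smaller omission: \eqref{eq:s_two_forms} only identifies the limiting variance of $X_0$. To obtain $(\zeta_0,X_0)\myrightarrow{d}\N(0,\sigma_{t^*}^2)\otimes\N(0,s^{-1})$, you need the representation $X_0=r\sqrt{1-t^2}\,V_\perp$, with $V$ uniform on the unit sphere of $b^\perp$ and independent of $t$, followed by Slutsky.
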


\begin{pf}
Proof is given in Appendix~\ref{app:thm:fixed_step_proof}.
\end{pf}

Theorem~\ref{thm:fixed_step} shows that under stationarity, the intrinsic proposal allows a fixed step size with nondegenerate acceptance and nontrivial limiting dynamics for $(\zeta,X)$.

\begin{rmk}[Dependence on the initial state]
The acceptance probability in Theorem~\ref{thm:fixed_step} is averaged over the stationary distribution. This average has a positive limit, although the conditional acceptance probability at the point $z=b+r\hat b$ tends to zero as $d\to\infty$ at fixed $\beta$. Consequently, at fixed $\beta$, there is no dimension-independent bound on the total-variation mixing time that holds uniformly over initial states. This distinction between stationary and nonstationary acceptance seems analogous to the transient behaviour of MALA for which certain initial conditions require smaller proposal scales than those in stationarity \cite{christensen-roberts-rosenthal:2005,jourdain-lelievre-miasojedow:2014}.
\end{rmk}

\subsection{Curvature corrected tangent pCN}

As a direct consequence, taking the intrinsic proposal with $\theta=s$ as given in~\eqref{eq:intrinsic_proposal} results in a curvature corrected tangent pCN which scales independently of dimension. The changes that need to be made to Algorithm~\ref{alg:tangent-pCN} are: Compute $m_z$ in line 2 (and $m_{z'}$ in line 9) via $m_z \leftarrow \gamma P_z z/\theta$ (and $m_{z'}\leftarrow \gamma P_{z'}z'/\theta$). Compute the tangent increment $v$ in line 3 by $m_z + \beta P_z\xi/\sqrt{\theta}$. Lastly, accept if $\log u < R$ for $R$ computed via
\begin{equation}
    R = -\frac12 \left(\|z'\|^2 - \|z\|^2\right) + \frac \theta{2\beta^2} \left( \|v-m_z\|^2 - \|v' - m_{z'}\|^2\right)\,,
\end{equation}
or equivalently by equation~\eqref{eq:Rcancel_intrinsic}. This is the algorithm used to produce the curvature corrected pCN results in Figure~\ref{fig:sphere-direct-cancel}. The intrinsic precision has to be known or estimated to some accuracy in practice. On the shifted sphere, this is computed explicitly using the known $\delta$ and $\kappa$ which Theorem~\ref{thm:fixed_step} uses. When the intrinsic precision must be estimated, recovering the same acceptance or transition limit is not guaranteed.

\section{Discussion and Outlook}

The shifted sphere analysis gives us a tractable example of how geometry and the choice of Gaussian reference jointly determine the scaling of constrained MCMC. Under stationarity, tangent pCN retains enough Gaussian cancellation to improve the $d^{-1/2}$ scaling of tangent RWMH to $d^{-1/6}$ (except on $s=1$ where fixed steps are admissible). The structure of the remaining defect gives an ESJD-optimal acceptance probability of $0.574$ which is shared by every member of the drift family. For tangent pCN, the joint diffusion limit identifies this ESJD as the transverse diffusion speed, whereas the longitudinal speed is maximised at $0.712$ with coordinates evolving on the $d^{1/3}$ time scale. Finally, matching the reference precision to $s$, characterised by the intrinsic potential Hessian and Ricci curvature, yields nondegenerate stationary acceptance and explicit limiting dynamics at fixed step size.

Sampling a Gaussian measure on a shifted sphere is merely a benchmark problem to derive scaling properties. In fact, it was shown along the way that this particular problem can be reduced to only three scalar statistics, with uniform angles in the complement, which allows us to construct much simpler dimensionally independent samplers for that specific geometry. The actual motivation for constructing high-dimensional manifold samplers is instead the prospect of sampling on arbitrary manifolds. Here, though, demonstrating rigorous scaling properties is much harder.

A glimpse of this can be intuited by looking at equation~\eqref{eq:Rcancel_manifold_residuals} and adding the logarithmic coarea correction, i.e.
\begin{equation*}
    \Rcancel = \frac1{2\beta^2}\left(r_b^2 - r_f^2\right) + \log \left(\frac{\|\nabla F(z)\|}{\|\nabla F(z')\|}\right)\,,
\end{equation*}
with the residuals $r_f$, $r_b$ of \eqref{eq:Rcancel_manifold_residuals}. This formula is true for arbitrary hypersurfaces, where for the sphere further cancellations occur and the coarea term disappears. For a general manifold,
\begin{equation*}
    r_f = \langle\nhat, z'-\rho z\rangle = \langle \nhat, z + v + \alpha \nhat - \rho z\rangle = (1-\rho)\langle \nhat, z\rangle + \alpha\,,
\end{equation*}
and since acceptance is bounded below through $r_f^2$ and the coarea term, we see that a bound on $|r_f|$ that does not grow with $d$ would be sufficient for dimensionally independent scaling of the acceptance probability. Note that such a bound on $r_f$ is not necessary, and in fact untrue for the case of the sphere, where both $\langle\nhat,z\rangle$ and $\operatorname{tr}S_z$ are $O(\sqrt d)$ and $r_f$ is bounded only through the leading order cancellation (uncorrected tangent pCN, Section~\ref{sec:proposed-algorithm}) or full cancellation (curvature corrected tangent pCN, Section~\ref{sec:geom-aware-corr}) of its dimensionally diverging terms with their counterparts in $r_b$. 

Still, a bound on the growth of $r_f$ would be sufficient, and it is worthwhile to consider what properties of the considered manifold could guarantee it. In a small-$\beta$ expansion, $\alpha$ can be approximated by realising that
\begin{align*}
    0 = F(z') &= F(z+v+\alpha\nhat)\\
    &\approx \underbrace{F(z)}_{=0} + \langle \nabla F(z), v + \alpha\nhat\rangle + \tfrac12 (v + \alpha \nhat)^\top \nabla^2 F(z)(v + \alpha \nhat) \\
    &\approx \alpha \|\nabla F(z)\| + \tfrac12 v^\top \nabla^2 F(z) v\,,
\end{align*}
dropping higher order terms in $\beta$. Therefore, and with $v = \beta P_z \xi + O(\beta^2)$, we can write to leading order
\begin{equation*}
    \alpha \approx -\frac{\beta^2}{2} \xi^\top\frac{P_z \nabla^2 F(z) P_z}{\|\nabla F(z)\|}\xi = -\frac{\beta^2}{2} \xi^\top S_z\xi = -\frac{\beta^2}{2} \sum_{i=1}^{d-1} \kappa_i \xi_i^2\,,
\end{equation*}
where $\kappa_i$ are the principle curvatures of the manifold with shape operator $S_z$. Expectation and variance of $\alpha$ are thus controlled by the trace and Hilbert-Schmidt norm of the shape operator, $\operatorname{tr}S_z$ and $\sum_{i=1}^{d-1} \kappa_i^2$, respectively. The acceptance probability on general manifolds are therefore directly related to the normal component and the curvature spectrum.

A particularly relevant problem is that of sampling conditioned diffusions. To sample an It\^o stochastic differential equation (SDE), it is standard practice to discretise the continuous temporal variable, obtaining a finite but high-dimensional sampling problem. If the goal is to condition the SDE on a scalar variable, then we are effectively sampling a Gaussian measure on a codimension one submanifold, and our approaches above become applicable in principle. Following the above arguments, the relevant quantities are the directional derivative of the conditioned variable with respect to the noise, and the second variation of the noise-to-variable map. In the sense of Malliavin calculus~\cite{nualart:2006, inahama:2014}, the first can typically be uniformly bounded, and the second yields a trace-class operator in the case of additive Gaussian noise SDEs~\cite{ben-arous:1988, schorlepp-tong-grafke-etal:2023} (or Hilbert-Schmidt for multiplicative noise~\cite{klose:2026, schorlepp-grafke:2026}). In other words, the manifold defined by a conditioned additive Gaussian diffusion is actually milder than the sphere benchmark: A cancellation of forward and backward geometric residual is not needed here, as the forward residual can be bounded on its own. This means that, in contrast to the sphere geometry, we expect that uncorrected tangent pCN, in the sense of Section~\ref{sec:proposed-algorithm}, should already scale independently of dimension $d$ (i.e. independently of temporal resolution) even without curvature correction, while tangent MALA and RWMH should exhibit degenerating acceptance probabilities in the temporal continuum limit.

\begin{table}
  \caption{Acceptance probabilities of tangent pCN, tangent MALA, and tangent RWMH for varying $\beta$ and $d$ when sampling a Gaussian measure on the family of $(d-1)$-dimensional manifolds given by conditioning a discretised stochastic process onto a scalar constraint.}
  \centering
  \label{tbl:process}
\begin{tabular}{ll rrrrrrrrrrrrrrr}\toprule
& $d$ & 
$2^8$ & 
$2^{10}$ & 
$2^{12}$ & 
$2^{14}$ & 
$2^{16}$ & 
$2^{18}$ & 
$2^{20}$ & 
$2^{22}$\\\midrule
$\beta=0.7$ & pCN & 
$0.987$ & 
$0.987$ & 
$0.988$ & 
$0.985$ & 
$0.986$ & 
$0.986$ & 
$0.987$ & 
$0.987$ \\
& MALA & 
$0.498$ & 
$0.171$ & 
$0.010$ & 
$0.001$ & 
--- & 
--- & 
--- & 
---\\
& RWMH & 
--- & 
--- & 
--- & 
--- & 
--- & 
--- & 
--- & 
---\\
\midrule
$\beta=0.3$ & pCN & 
$0.990$ & 
$0.990$ & 
$0.990$ & 
$0.989$ & 
$0.991$ & 
$0.990$ & 
$0.991$ & 
$0.990$ \\
& MALA & 
$0.956$ & 
$0.913$ & 
$0.834$ & 
$0.662$ & 
$0.398$ & 
$0.105$ & 
$0.007$ & 
$0.002$ \\
& RWMH & 
$0.025$ & 
$0.004$ & 
--- & 
--- & 
--- & 
--- & 
--- & 
---\\\bottomrule
    \end{tabular}
\end{table}

\begin{figure}
    \centering
    \includegraphics[width=0.8\linewidth]{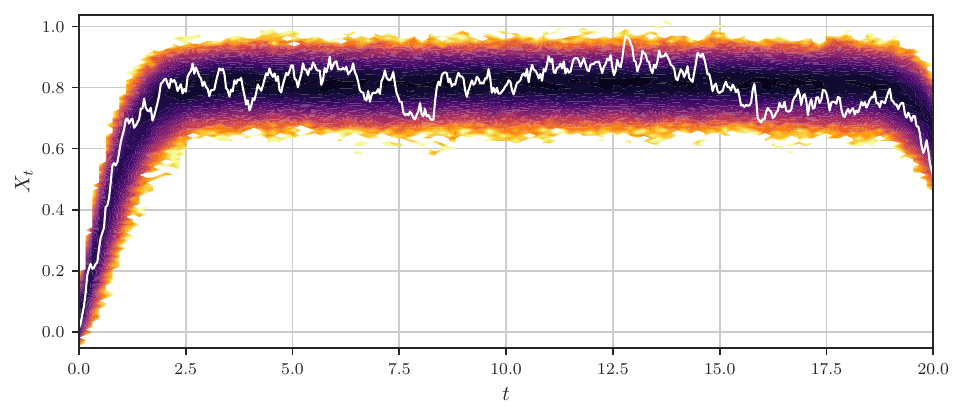}
    \caption{Path ensemble of the discretised stochastic
      process and conditioned on the
      integral constraint~(\ref{eq:process}). Depicted is the path density as shading, and a
      single arbitrary sample trajectory as white line.}
    \label{fig:process}
\end{figure}

This is confirmed by a concrete example: take the nonlinear SDE given by
\begin{equation}
\label{eq:process}
  dX_t = -X_t^3\,dt + \sqrt{\varepsilon} \, dW_t\,,\ X_0=0\,,\  t\in[0,T]\,,\quad\text{conditioned on}\quad \int_0^T X_t^3\,dt = Z\,,
\end{equation}
discretised by Euler-Maruyama scheme and by a left Riemann sum, respectively. Given that we are free to choose a number of timesteps $d$, and generally want to pick $d$ large for high numerical accuracy, this defines a family of manifold sampling problems in the high-dimensional regime. Note that for this problem, the normal vector has a norm dependent on location, so we need to include the coarea term into the algorithm.

Table~\ref{tbl:process} shows the results, confirming the discussed heuristics: For a value $\beta=0.7$, RWMH is unusable already for $d=2^8=256$ grid points, while tangent MALA decays more mildly and remains feasible to thousands of grid points. Tangent pCN has a high acceptance rate in this case independent of dimensionality up to $d=2^{22}\approx 4\cdot 10^6$ grid points. Lowering $\beta$ to $0.3$ reveals RWMH still being unusable even at moderate resolution, while tangent MALA starts to become feasible in high dimension, and tangent pCN again empirically scaling independently of dimension throughout. Figure~\ref{fig:process} depicts an ensemble of trajectories drawn from the conditioned process at $d=512$ sampled via pCN at $\beta=0.7$, and we take $T=20$, $\varepsilon=10^{-2}$ and $Z=10$ for all numerical experiments.

As further outlook, it can be envisioned that the sampling techniques we introduced in this paper can be generally applied to generative models defined as push-forwards of Gaussian reference measures, including for example diffusion/score-based generative models~\cite{song-sohl-dickstein-kingma-etal:2021}, normalizing flows trained by flow matching~\cite{lipman-chen-ben-hamu-etal:2023}, or learned stochastic dynamics~\cite{kidger-foster-li-etal:2021}. This would mean that the generative model could be exactly conditioned on nonlinear conditions, for example fixing hard constraints on the generated output.

%
%
\appendix
\section{Proofs}

\subsection{Proof of Lemma~\ref{lem:concentration_t}}
\label{app:concentration_t_proof}

Let us define the unnormalised log-density
\begin{equation}
\ell_d(t)=\frac{d-3}{2}\log(1-t^2)-\delta\kappa d\,t\,, \qquad t\in(-1,1).
\end{equation}
Since $t^*$ satisfies $-t^*/(1-(t^*)^2)=\delta\kappa$, we have
\begin{equation*}
  \ell_d'(t^*)=-3\delta\kappa,
  \qquad
  \ell_d''(t)=-(d-3)\frac{1+t^2}{(1-t^2)^2}\le-(d-3),
  \qquad
  \frac{\ell_d''(t^*)}{d}\myrightarrow{}-\sigma_{t^*}^{-2}.
\end{equation*}
The density of $\zeta=\sqrt d\,(t-t^*)$ is proportional to $f_d(u):=\exp\{\ell_d(t^*+u/\sqrt d)-\ell_d(t^*)\}$ on
$\bigl(-\sqrt d(1+t^*),\sqrt d(1-t^*)\bigr)$, extended by zero outside its support. Taylor expansion at $t^*$ gives, for every fixed $u$,
\begin{equation*}
  f_d(u)=\exp\Bigl\{-\frac{3\delta\kappa\,u}{\sqrt d} +\frac{\ell_d''(t^*)}{2d}\,u^2+O(d^{-1/2})\Bigr\}
  \myrightarrow{}\exp\Bigl(-\frac{u^2}{2\sigma_{t^*}^2}\Bigr),
\end{equation*}
Consequently, for $d\ge6$,
\begin{equation*}
  f_d(u)\le\exp\Bigl(\frac{3\delta\kappa|u|}{\sqrt d}-\frac{u^2}{4}\Bigr)
  \le C \,e^{-u^2/8}.
\end{equation*}
where $C$ is independent of $d$. By dominated convergence the densities of $\zeta$ converge to $\N(0,\sigma_{t^*}^2)$ and Scheff\'e's lemma gives \eqref{eq:t_CLT}. Since the normalising constants are bounded below for large $d$, the same envelope bounds the density of $\zeta$ by a multiple of $e^{-u^2/8}$ uniformly in $d$, so $\E|\zeta|^k=O(1)$ for every $k$, and $\E_\pi|t-t^*|^k=d^{-k/2}\,\E|\zeta|^k=O(d^{-k/2})$ is \eqref{eq:t_moments}.

\subsection{Refined expansion of \texorpdfstring{$g$}{g}}
\label{app:Lambda_derivation}

Throughout this subsection, $c_\beta\in\RR$ denotes a deterministic drift coefficient and the proposal is
the drift family \eqref{eq:drift_family}. Tangent pCN is the member $c_\beta=\gamma$, which is all that Sections~\ref{sec:expans-at-crit}--\ref{sec:cons-scal-optim} require; we carry a general $c_\beta$ because the proof is identical and the drift family \eqref{eq:drift_family} of Section~\ref{sec:drift-family} then obtains its scaling theory from the same lemma at no extra effort. The scalar reduction \eqref{eq:scalar_reduction} holds verbatim with $\gamma$ replaced by $c_\beta$,
\begin{equation}
  g = c_\beta \|P_z(b)\|^2 + \beta \|P_z(b)\|\,\xi_1 + \alpha\,\langle\nhat,b\rangle,
  \qquad
  \|v\|^2 = (c_\beta \|P_z(b)\|+\beta\xi_1)^2 + \beta^2 W,
  \label{eq:g_split_general}
\end{equation}
with $\xi_1\sim\N(0,1)$, $W\sim\chi^2_{d-2}$, and $(t,\xi_1,W)$
mutually independent. By
Lemma~\ref{lem:concentration_t},
\begin{equation}
  \|P_z(b)\|^2=\tau^2 d+O_p(\sqrt d),
  \quad
  \frac{\langle\nhat,b\rangle}{r}= -\tau^2 +O_p(d^{-1/2})\,,
  \label{eq:blocks_refined}
\end{equation}
while $|\langle\nhat,b\rangle|\le\delta\sqrt d$ deterministically and
$W=(d-2)+O_p(\sqrt d)$.

Finally, for $y\in[0,\tfrac12]$, Taylor's theorem with Lagrange
remainder gives $|\sqrt{1-y}-1+\tfrac y2+\tfrac{y^2}8|\le y^3$. Hence,
whenever $\|v\|^2\le r^2/2$, the projection scalar $\alpha$ of
Proposition~\ref{prop:sphere_backprojection}(i) admits, with
$y=\|v\|^2/r^2$, the third-order expansion
\begin{equation}
  \alpha=-\frac{\|v\|^2}{2r}-\frac{\|v\|^4}{8r^3}
         +\vartheta\,\frac{\|v\|^6}{r^5},
  \qquad |\vartheta|\le1.
  \label{eq:alpha_third_order_app}
\end{equation}
The following event controls $\|v\|^2$ pathwise for the drift family \eqref{eq:drift_family}.

\begin{lem}[The good event $\Omega_d$]
\label{lem:good_event}
Let $\Omega_d := \{\,W + 2\xi_1^2 \le 2d\,\}$, where
$W\sim\chi^2_{d-2}$ and $\xi_1\sim\N(0,1)$ are the proposal statistics
of the scalar reduction. Then there is a constant $c>0$ with
\begin{equation*}
  \mathbb P(\Omega_d^{\complement})\le e^{-cd}
  \qquad\text{for all } d\ge 3 .
\end{equation*}
Moreover, there exist $\beta_0, C_0>0$, depending only on
$(\delta,\kappa)$, such that for every $\beta\le\beta_0$, pathwise on
$\Omega_d$,
\begin{equation}
  \|v\|^2 \le C_0\,\beta^2 d \le \tfrac12\,r^2 ,
  \label{eq:v2_pathwise_bound}
\end{equation}
for any drift proposal $v=c_\beta P_z(b)+\beta P_z\xi$ with $|c_\beta|\le\beta$ with
$C_0 = 2(1+\delta^2)$ and $\beta_0^2 = 1 \wedge \kappa^2/(4(1+\delta^2))$. In particular, on $\Omega_d$ the projection onto $\Mc$ exists and the third-order expansion \eqref{eq:alpha_third_order_app} of $\alpha$ is
valid.
\end{lem}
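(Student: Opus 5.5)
The lemma has two independent parts: a large-deviation bound for $\Omega_d^{\complement}$, and a deterministic bound on $\|v\|^2$ on $\Omega_d$. I will prove them separately.

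\textbf{Tail bound.} Set $Y := W + 2\xi_1^2$. The variables $W\sim\chi^2_{d-2}$ and $\xi_1$ are independent, so for $\lambda\in(0,\tfrac14)$
\begin{equation*}
  \E[e^{\lambda Y}] = (1-2\lambda)^{-(d-2)/2}(1-4\lambda)^{-1/2}.
\end{equation*}
The Chernoff bound then gives
\begin{equation*}
  \mathbb P(Y>2d) \le \exp\Bigl(-2\lambda d - \tfrac{d-2}{2}\log(1-2\lambda) - \tfrac12\log(1-4\lambda)\Bigr).
\end{equation*}
Take $\lambda=\tfrac18$. The exponent becomes $-\tfrac d4 + \tfrac{d-2}{2}\log\tfrac43 + \tfrac12\log 2$. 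Since $\tfrac12\log\tfrac43\approx0.144<\tfrac14$, this is at most $-c'd + C$ for some $c'>0$. The constant $C$ is absorbed into a smaller $c$ for large $d$. For the finitely many small $d\ge3$, we have $\mathbb P(Y>2d)<1$ (Gaussian tails are nondegenerate), so shrinking $c$ once more gives $e^{-cd}$ for all $d\ge3$.

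\textbf{Pathwise bound.} Write $\xi_1$ for the $e_1$ component, as in the scalar reduction \eqref{eq:scalar_reduction} with $\gamma$ replaced by $c_\beta$. Using $(a+b)^2\le 2a^2+2b^2$,
\begin{equation*}
  \|v\|^2 = (c_\beta\|P_z(b)\|+\beta\xi_1)^2+\beta^2W \le 2c_\beta^2\|P_z(b)\|^2 + \beta^2(2\xi_1^2+W).
\end{equation*}
We have $\|P_z(b)\|\le\|b\|=\delta\sqrt d$, and $|c_\beta|\le\beta$ with $\beta\le1$ gives $c_\beta^2\le\beta^2$. On $\Omega_d$ the second term is at most $2\beta^2 d$. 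Hence
\begin{equation*}
  \|v\|^2\le 2\beta^2\delta^2 d+2\beta^2 d = C_0\beta^2 d, \qquad C_0=2(1+\delta^2).
\end{equation*}
The condition $\beta^2\le\beta_0^2\le\kappa^2/(4(1+\delta^2))$ then gives
\begin{equation*}
  C_0\beta^2 d\le \tfrac12\kappa^2 d=\tfrac12 r^2.
\end{equation*}

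\textbf{Consequences.} On $\Omega_d$ we have $\|v\|<r$, so Proposition~\ref{prop:sphere_backprojection}(i) shows the projection exists. Moreover $y=\|v\|^2/r^2\le\tfrac12$, which is exactly the range in which \eqref{eq:alpha_third_order_app} was derived, so that expansion is valid.

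\textbf{Main obstacle.} The only delicate point is making the Chernoff constant uniform down to $d=3$. Everything else is elementary bookkeeping, and the stated constants $C_0$ and $\beta_0$ match the computation above exactly.
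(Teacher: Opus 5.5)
Your proof is correct and follows the paper's argument. The pathwise bound via $(a+b)^2\le 2a^2+2b^2$, $|c_\beta|\le\beta$ and $\|P_z(b)\|^2\le\delta^2 d$ is identical to the paper's. For the tail, you apply a single Chernoff bound to $W+2\xi_1^2$ with $\lambda=1/8$, whereas the paper uses the union bound $\mathbb P(W>3d/2)+\mathbb P(\xi_1^2>d/4)$ with standard $\chi^2$ Chernoff bounds on each piece. Your version gives an explicit rate: the exponent equals $-0.106\,d+0.059\le -0.08\,d$ already for all $d\ge 3$, so your separate small-$d$ adjustment is not even needed.
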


\begin{pf}
The tail bound:
$\mathbb P(\Omega_d^{\complement})
 \le \mathbb P(W>\tfrac32 d) + \mathbb P(\xi_1^2>\tfrac14 d)
 \le e^{-cd}$
by standard Chernoff bounds for $\chi^2_{d-2}$ and $\chi^2_1$. By $(a+b)^2\le 2a^2+2b^2$, $|c_\beta|\le\beta$ and $\|P_z(b)\|^2\le\delta^2 d$,
\begin{equation*}
  \|v\|^2 \le 2c_\beta^2\|P_z(b)\|^2 + 2\beta^2\xi_1^2 + \beta^2 W
  \le 2\beta^2\delta^2 d + \beta^2\bigl(W+2\xi_1^2\bigr)
  \le 2\beta^2 d\,(1+\delta^2)
\end{equation*}
on $\Omega_d$; the choice of $\beta_0$ gives $\le\kappa^2 d/2 = r^2/2$. The final claim holds since $\|v\|^2/r^2\le\tfrac12$ on $\Omega_d$.
\end{pf}

\begin{lem}[Refined expansion of $g$]
\label{lem:g_refined}
Let $\Omega_d$ and $\beta_0$ be as in Lemma~\ref{lem:good_event}. Assume stationarity and let $\beta=\beta_d\to0$ with $\beta_d\le\beta_0$. Then, on $\Omega_d$:
\begin{enumerate}
\item[(i)] If $c_\beta=-\tfrac12\beta^2+\mu\beta^4+o(\beta^4)$ with
$\mu\in\RR$ fixed, then
\begin{equation}
  g = \beta\tau\sqrt d\,\xi_1
     + \Lambda(\mu)\,\beta^4 d
     + O_p(\beta)
     + O_p(\beta^2\sqrt d)
     + O_p(\beta^3\sqrt d)
     + o(\beta^4 d).
  \label{eq:g_refined_general}
\end{equation}
Here $\Lambda(\mu)$ and $C_*(\mu)$ are defined in \eqref{eq:Lambda_mu_def} and \eqref{eq:Cstar_mu}, respectively. If moreover $c_\beta=-\tfrac12\beta^2+\mu\beta^4+O(\beta^6)$, the final term improves to $O(\beta^6 d)$.
\item[(ii)] If $c_\beta\equiv0$, then
\begin{equation}
  g=\beta\tau\sqrt d\,\xi_1+\tfrac12\tau^2\beta^2 d
    +O_p(\beta)+O_p(\beta^2\sqrt d)+O_p(\beta^4 d).
  \label{eq:g_refined_rw}
\end{equation}
\end{enumerate}
\end{lem}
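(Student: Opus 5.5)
The plan is to substitute the third-order expansion \eqref{eq:alpha_third_order_app} of $\alpha$, which is valid pathwise on $\Omega_d$ by Lemma~\ref{lem:good_event}, into the scalar representation \eqref{eq:g_split_general}:
\[
g=c_\beta\|P_z(b)\|^2+\beta\|P_z(b)\|\,\xi_1+\alpha\langle\nhat,b\rangle .
\]
Each piece is then expanded using the stationary blocks \eqref{eq:blocks_refined}, which come from Lemma~\ref{lem:concentration_t}. The fluctuations $W-(d-2)=O_p(\sqrt d)$ and $\xi_1=O_p(1)$ are carried along. Since $c_\beta=-\tfrac12\beta^2+O(\beta^4)$, we have $|c_\beta|\le\beta$ for $\beta$ small, so Lemma~\ref{lem:good_event} applies.

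First, for the noise term I write $\|P_z(b)\|=\delta\sqrt d\sqrt{1-t^2}=\tau\sqrt d+O_p(1)$. This gives
\[
\beta\|P_z(b)\|\xi_1=\beta\tau\sqrt d\,\xi_1+O_p(\beta).
\]

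Second, for the drift term,
\[
c_\beta\|P_z(b)\|^2=-\tfrac12\beta^2\tau^2 d+\mu\tau^2\beta^4 d+O_p(\beta^2\sqrt d)+o(\beta^4 d).
\]
The $o(\beta^4 d)$ becomes $O(\beta^6 d)$ under the stronger hypothesis on $c_\beta$.

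Third, for the squared step size I expand
\[
\|v\|^2=c_\beta^2\|P_z(b)\|^2+2c_\beta\beta\|P_z(b)\|\xi_1+\beta^2\xi_1^2+\beta^2W .
\]
This equals
\[
\beta^2 d+\tfrac14\tau^2\beta^4 d+O_p(\beta^2\sqrt d)+O_p(\beta^3\sqrt d)+O_p(\beta^4\sqrt d)+O(\beta^2).
\]

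Fourth, for the projection term I use $\langle\nhat,b\rangle/(2r)=-\tfrac12\tau^2+O_p(d^{-1/2})$. The leading part of $\alpha\langle\nhat,b\rangle$ is then
\[
-\frac{\|v\|^2\langle\nhat,b\rangle}{2r}=\tfrac12\tau^2\beta^2 d+\tfrac18\tau^4\beta^4 d+O_p(\beta^2\sqrt d)+O_p(\beta^3\sqrt d).
\]
The second-order correction is
\[
-\frac{\|v\|^4\langle\nhat,b\rangle}{8r^3}=\frac{\tau^2\|v\|^4}{8r^2}(1+O_p(d^{-1/2}))=\frac{\tau^2}{8\kappa^2}\beta^4 d+O_p(\beta^4\sqrt d)+O_p(\beta^5 \sqrt d).
\]
For the remainder, I use the deterministic bound $|\langle\nhat,b\rangle|\le\delta\sqrt d$ together with $\|v\|^2\le C_0\beta^2 d$ on $\Omega_d$. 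This gives
\[
\Bigl|\vartheta\,\|v\|^6\langle\nhat,b\rangle/r^5\Bigr|\le C\beta^6 d^3\sqrt d/d^{5/2}=O(\beta^6 d),
\]
deterministically.

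Summing, the $\pm\tfrac12\tau^2\beta^2 d$ contributions cancel exactly, and the $\beta^4 d$ coefficient is
\[
\tau^2\Bigl(\mu+\tfrac{\tau^2}8+\tfrac1{8\kappa^2}\Bigr)=\tau^2\Bigl(\mu+\tfrac s8\Bigr)=\Lambda(\mu),
\]
using \eqref{eq:s_def}. Every other term falls into one of the listed classes $O_p(\beta)$, $O_p(\beta^2\sqrt d)$, $O_p(\beta^3\sqrt d)$, $o(\beta^4 d)$. Terms like $O_p(\beta^4\sqrt d)$ are absorbed, since they are $o(\beta^4 d)$ in probability and also $O_p(\beta^2\sqrt d)$. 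This gives (i). For (ii), set $c_\beta=0$. The drift term vanishes, $\|v\|^2=\beta^2 d+O_p(\beta^2\sqrt d)+O(\beta^2)$, and the leading projection term gives $\tfrac12\tau^2\beta^2 d$ with no cancellation. The fourth-order and remainder terms are $O_p(\beta^4 d)$, which yields \eqref{eq:g_refined_rw}.

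The main obstacle is bookkeeping rather than any single estimate. Products of an $O_p$ fluctuation of one block with a deterministic leading order of another must each be placed into the stated error classes. In particular, the random blocks $\|P_z(b)\|^2$ and $\langle\nhat,b\rangle/r$ both fluctuate at relative order $d^{-1/2}$, and their product with $\beta^2 d$ must be shown to be only $O_p(\beta^2\sqrt d)$. This holds because both are $O_p$ by Lemma~\ref{lem:concentration_t}; exact cancellation is not needed. I would organise this by writing each block as its deterministic limit plus a remainder whose scale is tracked explicitly. The only deterministic, non-probabilistic bound needed is the one for the Taylor remainder, which is exactly what the event $\Omega_d$ provides.
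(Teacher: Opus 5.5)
Your proposal is correct and follows the paper's proof essentially step for step. You substitute the third-order expansion of $\alpha$ on $\Omega_d$, cancel the $\pm\tfrac12\tau^2\beta^2 d$ terms, collect $\mu\tau^2+\tau^4/8+\tau^2/(8\kappa^2)$ at order $\beta^4 d$, and bound the cubic remainder deterministically by $O(\beta^6 d)$. One minor slip: your expansions of $\|v\|^2$ and $\|v\|^4/r^2$ drop deterministic $O(\beta^6 d)$ pieces, namely the tail of $c_\beta^2\|P_z(b)\|^2$ from $c_\beta^2=\tfrac14\beta^4+O(\beta^6)$ and the cross term $\tfrac{\tau^2}{2\kappa^2}\beta^6 d$; these are harmlessly absorbed into the final $o(\beta^4 d)$, respectively $O(\beta^6 d)$, term.
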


Two remarks before the proof. First, the tangent pCN case $c_\beta=\gamma$ is Lemma~\ref{lem:g_cancellation}; its uniform in $\beta_d$ form is what the trichotomy proof in Appendix~\ref{app:scaling_trichotomy_proof} requires. Second, the constant $\Lambda(\mu)$ is affine in $\mu$ with slope $\tau^2$ (the drift acts through $c_\beta \|P_z(b)\|^2$ alone) and intercept $\tau^2 s/8$ split between the two curvature mechanisms identified in the proof: $\tau^4/8$ from the drift-squared part of $\|v\|^2$ meeting the leading projection term, and $\tau^2/(8\kappa^2)$ from the $\beta^2 d$ part of $\|v\|^2$ meeting the second-order projection term.

\begin{pf}
By Lemma~\ref{lem:good_event} which is applicable since $|c_\beta|\le\beta$ for all sufficiently small $\beta$, $\mathbb P(\Omega_d^{\complement})\le e^{-cd}$ and, pathwise on $\Omega_d$, $\|v\|^2\le C_0\beta^2 d\le r^2/2$, so the expansion \eqref{eq:alpha_third_order_app} of $\alpha$ is valid on $\Omega_d$.

\emph{Step 1 (expansion of $\|v\|^2$).}
Since $c_\beta^2=\tfrac14\beta^4+O(\beta^6)$, expanding
\eqref{eq:g_split_general} with \eqref{eq:blocks_refined} gives
\begin{equation}
  \|v\|^2=\beta^2 d+\frac{\tau^2}{4}\beta^4 d
   +O_p(\beta^2\sqrt d)+O_p(\beta^3\sqrt d)+O(\beta^6 d),
  \label{eq:v2_refined}
\end{equation}
where $O_p(\beta^2\sqrt d)$ absorbs $\beta^2(W-(d-2))$, $-2\beta^2$,
$\beta^2\xi_1^2$ and $\tfrac14\beta^4(\|P_z(b)\|^2-\tau^2 d)=O_p(\beta^4\sqrt d)$,
while $O_p(\beta^3\sqrt d)$ is the cross term $2c_\beta \|P_z(b)\|\beta\xi_1$
and $O(\beta^6 d)$ collects the deterministic tail of $c_\beta^2 \|P_z(b)\|^2$.

\emph{Step 2 (assembling $g$, case (i)).}
Substituting \eqref{eq:alpha_third_order_app} into
\eqref{eq:g_split_general},
\begin{equation}
  g=c_\beta \|P_z(b)\|^2+\beta \|P_z(b)\|\xi_1
    -\frac{\langle\nhat,b\rangle}{2r}\,\|v\|^2
    -\frac{\langle\nhat,b\rangle}{8r}\,\frac{\|v\|^4}{r^2}
    +\vartheta\,\frac{\langle\nhat,b\rangle\|v\|^6}{r^5}.
  \label{eq:g_assembled}
\end{equation}
We expand each term, using throughout that $|\langle\nhat,b\rangle/r|\le\delta/\kappa$
is a bounded multiplier.

(a) $\beta \|P_z(b)\|\xi_1=\beta\tau\sqrt d\,\xi_1+O_p(\beta)$ by
\eqref{eq:blocks_refined}.

(b) \emph{Cancellation at order $\beta^2 d$.} The contributions of $c_\beta \|P_z(b)\|^2$ and
$-(\langle\nhat,b\rangle/2r)\|v\|^2$ at this order cancel. Indeed, \eqref{eq:v2_refined} gives
\begin{equation}
  -\frac{\beta^2}{2}
  \left[
    \bigl(\|P_z(b)\|^2- \tau^2d\bigr)
    +d\left(\frac{\langle\nhat,b\rangle}{r}+\tau^2\right)\right]
  = O_p(\beta^2\sqrt d),
  \label{eq:cancellation_quantified}
\end{equation}
by the two estimates in \eqref{eq:blocks_refined}. This is the cancellation of
Lemma~\ref{lem:g_cancellation}.

(c) \emph{Drift contribution at order $\beta^4 d$.}
$\mu\beta^4 \|P_z(b)\|^2=\mu\tau^2\beta^4 d+O_p(\beta^4\sqrt d)$, and the
$o(\beta^4)$ tail of $c_\beta$ contributes
$o(\beta^4)\cdot \|P_z(b)\|^2=o(\beta^4 d)$ (deterministically, since
$\|P_z(b)\|^2\le\delta^2 d$); under the stronger hypothesis this is
$O(\beta^6 d)$.

(d) \emph{First curvature contribution.} The
$\tfrac{\tau^2}4\beta^4 d$ part of \eqref{eq:v2_refined}, multiplied
by $-\langle\nhat,b\rangle/(2r)$ yields
$\tfrac{\tau^4}8\beta^4 d+O_p(\beta^4\sqrt d)$; the remainders of
\eqref{eq:v2_refined} stay $O_p(\beta^2\sqrt d)+O_p(\beta^3\sqrt d)
+O(\beta^6 d)$ under the bounded multiplier.

(e) \emph{Second curvature contribution.} Squaring
\eqref{eq:v2_refined} and dividing by $r^2=\kappa^2 d$,
\begin{equation*}
  \frac{\|v\|^4}{r^2} =\frac{\beta^4 d}{\kappa^2} +\frac{\tau^2}{2\kappa^2}\beta^6 d +\frac{\tau^4}{16\kappa^2}\beta^8 d + O_p(\beta^4\sqrt d)+O_p(\beta^5\sqrt d)+O(\beta^8 d)+O_p(\beta^4),
\end{equation*}
hence

\begin{align*}
  -\frac{\langle\nhat,b\rangle}{8r}\,\frac{\|v\|^4}{r^2}
  &=\Bigl(\frac{\tau^2}{8}+O_p(d^{-1/2})\Bigr) \frac{\beta^4 d}{\kappa^2} +O_p(\beta^2\sqrt d)+O(\beta^6 d)\\
  &=\frac{\tau^2}{8\kappa^2}\,\beta^4 d +O_p(\beta^2\sqrt d)+O_p(\beta^4\sqrt d)+O(\beta^6 d).
\end{align*}

(f) \emph{Cubic remainder.}
On $\Omega_d$, by \eqref{eq:v2_pathwise_bound}, pathwise
\begin{equation*}
  |\langle\nhat,b\rangle|\,\frac{\|v\|^6}{r^5}
  \le \delta\sqrt d\;\frac{C_0^3\beta^6 d^3}{\kappa^5 d^{5/2}}
  = O(\beta^6 d) = o(\beta^4 d),
\end{equation*}
now a deterministic bound.

Summing (a)--(f) and consolidating
$O_p(\beta^4\sqrt d),O_p(\beta^2)\subset O_p(\beta^2\sqrt d)$ (valid
for $\beta\le1$, $d\ge1$) and $O(\beta^6 d)\subset o(\beta^4 d)$
gives \eqref{eq:g_refined_general} with
$\Lambda(\mu)=\mu\tau^2+\tfrac{\tau^4}8+\tfrac{\tau^2}{8\kappa^2}
=\tau^2(\mu+s/8)$.

\emph{Step 3 (case (ii)).}
With $c_\beta\equiv0$, $\|v\|^2=\beta^2(W+\xi_1^2)
=\beta^2 d+O_p(\beta^2\sqrt d)$, and \eqref{eq:g_assembled} becomes
\begin{equation*}
  g=\beta \|P_z(b)\|\xi_1
   +\Bigl(\frac{\tau^2}{2}+O_p(d^{-1/2})\Bigr)
    \bigl(\beta^2 d+O_p(\beta^2\sqrt d)\bigr)
   +O_p(\beta^4 d)+O_p(\beta^6 d),
\end{equation*}
the $O_p(\beta^4 d)$ bounding the $\|v\|^4$ term and the
$O_p(\beta^6 d)$ the cubic remainder; no cancellation occurs because
the $c_\beta \|P_z(b)\|^2$ term is absent. With (a), this is
\eqref{eq:g_refined_rw}.
\end{pf}

\begin{lem}[Expansion of the defect coefficients]
\label{lem:coeff_general}
Under the hypotheses of Lemma~\ref{lem:g_refined}(i), on the event
$\Omega_d$ of Lemma~\ref{lem:good_event}, for every $\beta_d\to0$,
\begin{equation}
  -1-\frac{2c_\beta}{\beta^2}=-2\mu\beta^2+o(\beta^2),
  \qquad
  -\frac{c_\beta}{\beta^2}\,\frac{\alpha}{r}
   =-\frac{\beta^2}{4\kappa^2}+o_p(\beta^2),
  \qquad
  \frac{c_\beta^2}{\beta^2}\,\frac{\langle\nhat,b\rangle}{r}
   =-\frac{\tau^2}{4}\beta^2+o_p(\beta^2),
  \label{eq:coeff_general}
\end{equation}
and $\dfrac{c_\beta^2}{2\beta^2}\,\dfrac{g^2}{r^2}
=O_p\bigl(\beta^4(1+\beta^6 d)\bigr)$.
\end{lem}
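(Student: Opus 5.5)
The plan is to treat the four claims in Lemma~\ref{lem:coeff_general} one at a time. Each comes from substituting the hypothesis $c_\beta=-\tfrac12\beta^2+\mu\beta^4+o(\beta^4)$, together with the ingredients already available on $\Omega_d$: the third-order expansion \eqref{eq:alpha_third_order_app} of $\alpha$, the expansion \eqref{eq:v2_refined} of $\|v\|^2$, the block estimates \eqref{eq:blocks_refined}, and the order of $g$ from Lemma~\ref{lem:g_refined}(i).

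\textbf{First coefficient.} This one is deterministic. We have $2c_\beta/\beta^2=-1+2\mu\beta^2+o(\beta^2)$, hence $-1-2c_\beta/\beta^2=-2\mu\beta^2+o(\beta^2)$. No probabilistic input is needed.

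\textbf{Second coefficient.} We have $-c_\beta/\beta^2=\tfrac12+O(\beta^2)$. For $\alpha/r$, \eqref{eq:alpha_third_order_app} gives
\[
\alpha/r=-\|v\|^2/(2r^2)+O(\|v\|^4/r^4).
\]
On $\Omega_d$, \eqref{eq:v2_pathwise_bound} gives $\|v\|^4/r^4\le C_0^2\beta^4/\kappa^4=O(\beta^4)$ deterministically. By \eqref{eq:v2_refined}, and since $r^2=\kappa^2 d$,
\[
\|v\|^2/r^2=\beta^2/\kappa^2+O_p(\beta^2 d^{-1/2})+O(\beta^4),
\]
so $\alpha/r=-\beta^2/(2\kappa^2)+o_p(\beta^2)$. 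The product with $\tfrac12+O(\beta^2)$ is $-\beta^2/(4\kappa^2)+o_p(\beta^2)$, where the cross term $O(\beta^2)\cdot\alpha/r=O_p(\beta^4)$ is absorbed.

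\textbf{Third coefficient.} We have $c_\beta^2/\beta^2=\tfrac14\beta^2+O(\beta^4)$. By \eqref{eq:blocks_refined}, $\langle\nhat,b\rangle/r=-\tau^2+O_p(d^{-1/2})$, and this quantity is bounded by $\delta/\kappa$. The product is therefore $-\tfrac{\tau^2}{4}\beta^2+O_p(\beta^2d^{-1/2})+O(\beta^4)=-\tfrac{\tau^2}{4}\beta^2+o_p(\beta^2)$.

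\textbf{Quadratic term.} This is the only step needing care, because $g$ may not be $O_p(\beta\sqrt d)$ when $\beta^3\sqrt d$ is unbounded. I would use \eqref{eq:g_refined_general} directly:
\[
g=O_p(\beta\sqrt d)+O(\beta^4 d),
\]
since the $O_p(\beta)$, $O_p(\beta^2\sqrt d)$ and $O_p(\beta^3\sqrt d)$ remainders are all $O_p(\beta\sqrt d)$ for $\beta\le1$, and the $\Lambda(\mu)\beta^4 d$ and $o(\beta^4 d)$ terms are $O(\beta^4 d)$. Then
\[
g^2/r^2=O_p(\beta^2)+O(\beta^8 d)=O_p\bigl(\beta^2(1+\beta^6 d)\bigr).
\]
Multiplying by $c_\beta^2/(2\beta^2)=O(\beta^2)$ gives $O_p\bigl(\beta^4(1+\beta^6d)\bigr)$.

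The main obstacle is bookkeeping of uniformity. The $o_p(\beta^2)$ claims must hold for an arbitrary sequence $\beta_d\to0$, not only at the critical scaling, so every remainder must be written relative to $\beta^2$ using only that $d\to\infty$ and $\beta\to0$. Restricting to $\Omega_d$, whose complement has probability at most $e^{-cd}$ by Lemma~\ref{lem:good_event}, is what makes the $\alpha$ expansion and the deterministic bound on $\|v\|^4/r^4$ legitimate.
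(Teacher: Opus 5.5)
Your proposal is correct and follows the paper's proof step by step. You read off the first coefficient from the hypothesis on $c_\beta$, the second from the third-order expansion of $\alpha$ together with the expansion of $\|v\|^2$, the third from $c_\beta^2/\beta^2=\tfrac14\beta^2+O(\beta^4)$ and the block estimate for $\langle\nhat,b\rangle/r$, and the quadratic term from $g=O_p(\beta\sqrt d)+O(\beta^4 d)$ via Lemma~\ref{lem:g_refined}(i); bounding the $\|v\|^4/r^4$ remainder pathwise on $\Omega_d$ rather than in probability is only a slight tightening of the paper's bookkeeping.
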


\begin{pf}
The first equation is immediate from the hypothesis. For the second, \eqref{eq:alpha_third_order_app} and \eqref{eq:v2_refined} give $\alpha/r=-\|v\|^2/(2r^2)+O_p(\|v\|^4/r^4)=-\beta^2/(2\kappa^2)+O_p(\beta^2 d^{-1/2})+O(\beta^4)$, and $c_\beta/\beta^2=-\tfrac12+O(\beta^2)$. For the third, $c_\beta^2/\beta^2=\tfrac14\beta^2+O(\beta^4)$ and \eqref{eq:blocks_refined}. For the last,
$g=O_p(\beta\sqrt d)+O(\beta^4 d)$ by Lemma~\ref{lem:g_refined}(i), so $g^2/r^2=O_p(\beta^2)+O(\beta^8 d)$, and $c_\beta^2/(2\beta^2)=O(\beta^2)$.
\end{pf}

\subsection{Proof of Theorem~\ref{thm:joint_diffusion_limit}}
\label{app:scalar_diffusion_proof}

We present here the full proof of Theorem~\ref{thm:joint_diffusion_limit}. We first derive the conditional moments of the longitudinal increment and then use the same expansions to obtain the transverse and mixed moments. A single two-dimensional diffusion approximation completes the proof.

\subsubsection{Locally uniform conditional expansion}
\label{app:conditional_expansion_diffusion}

At leading order, we have that the accepted scalar increment has zero conditional mean and the drift of the limiting diffusion is therefore determined by the next order term. We introduce the following notation to identify that contribution. Set
\begin{equation}
    \varepsilon:=d^{-1/6}, \qquad \widehat W_d:=\frac{W-(d-2)}{\sqrt{2(d-2)}}\,.
    \label{eq:diffusion_bookkeeping}
\end{equation}
Now define
\begin{align}
  g_0 &:= \ell\tau\left(\xi_1 - \frac{C_*\tau\ell^3}{2}\right),
  & R_0 &:= C_*\ell^2 g_0\,,
    \label{eq:g0_R0_diffusion} \\
  g_{1,d} &:= -\frac{\ell^2\tau^2}{2\delta\kappa\sigma_{t^*}^2}\,\zeta
         + \frac{\ell^2\tau^2}{\sqrt{2}}\,\widehat W_d\,,
  & R_{1,d} &:= C_*\ell^2 g_{1,d}\,.
    \label{eq:G1_R1_diffusion}
\end{align}

For each $\zeta$ satisfying $t^*+\zeta / \sqrt d\in[-1,1]$, let $\E_\zeta$ denote expectation over one step of the scalar transition kernel started from the current $\zeta$; equivalently, we write
$\E_\zeta[\cdot]=\E[\cdot\mid\zeta_k=\zeta]$. The next lemma gives the sphere-specific expansions needed in the diffusion proof.

\begin{lem}[Locally uniform conditional expansion]
\label{lem:conditional_expansion_diffusion}
Let $\beta=\ell \varepsilon$ and assume $C_*\ne0$. For every fixed $L>0$, every integer $m \ge 1$ and all sufficiently large $d$ (so that $\beta\le\beta_0$), on the event $\Omega_d$ of Lemma~\ref{lem:good_event},
\begin{align}
    g &= \varepsilon^{-2}g_0+\varepsilon^{-1}g_{1,d}+g_{2,d},
    \label{eq:g_conditional_expansion}\\
    \Rcancel &=R_0+\varepsilon R_{1,d}+R_{2,d},
    \label{eq:R_conditional_expansion}
\end{align}
where
\begin{equation}
    \sup_{|\zeta|\le L} \E_\zeta[|g_{2,d}|^m\mathds1_{\Omega_d}]=O(1),
    \qquad
 \sup_{|\zeta|\le L}
 \E_\zeta[|R_{2,d}|^m\mathds1_{\Omega_d}]
 =O(\varepsilon^{2m}).
 \label{eq:conditional_remainder_bounds}
\end{equation}
Moreover, for every fixed $m$,
\begin{equation}
    \sup_{\zeta}
 \E_\zeta\!\left[
 |\Delta\zeta|^m\mathds1_{\Omega_d^{\complement}}\right]
 \le 2^m d^{m/2}e^{-cd}.
 \label{eq:bad_event_increment_bound}
\end{equation}
\end{lem}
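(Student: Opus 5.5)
The plan is to redo the proof of Lemma~\ref{lem:g_refined}(i) for tangent pCN ($c_\beta=\gamma$, so $\mu=-1/8$), this time conditionally on the current $\zeta$, keeping one more order and every coefficient explicit. Everything is expressed in the scalars $(\zeta,\xi_1,\widehat W_d)$, with $t=t^*+\zeta/\sqrt d$. The orders involved are
\[
\beta\sqrt d=\ell\varepsilon^{-2},\qquad \beta^4d=\ell^4\varepsilon^{-2},\qquad \beta^2\sqrt d=\ell^2\varepsilon^{-1},\qquad \beta^3\sqrt d=\ell^3 .
\]
For $|\zeta|\le L$ and large $d$, $t$ stays in a fixed compact subset of $(-1,1)$. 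Hence $\|P_z(b)\|^2=\delta^2d(1-t^2)$, $\langle\nhat,b\rangle/r=(\delta/\kappa)t$ and $\sqrt{1-t^2}$ admit Taylor expansions in $\zeta/\sqrt d$ with remainders bounded by $C_L\zeta^2/d$, uniformly on $|\zeta|\le L$. On $\Omega_d$, the third-order expansion \eqref{eq:alpha_third_order_app} of $\alpha$ holds with the deterministic bound \eqref{eq:v2_pathwise_bound}. We also write $W=(d-2)+\sqrt{2(d-2)}\,\widehat W_d$, where $\widehat W_d$ has all moments bounded uniformly in $d$.

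\emph{Step 1: expansion of $g$.} Substitute these expansions into \eqref{eq:g_assembled}.
\begin{itemize}
\item The $\varepsilon^{-2}$ terms are $\beta\tau\sqrt d\,\xi_1+\Lambda\beta^4d$. Since $\Lambda=-C_*\tau^2/2$, this equals $\varepsilon^{-2}g_0$.
\item The $\varepsilon^{-1}$ terms come from two sources. The first is the quantified cancellation \eqref{eq:cancellation_quantified}, now linearised in $\zeta$: its coefficient is $-\tfrac{\beta^2\sqrt d}{2}\,\delta(1/\kappa-2\delta t^*)\,\zeta$. The second is $-\tfrac{\langle\nhat,b\rangle}{2r}\beta^2\sqrt{2(d-2)}\,\widehat W_d$, which equals $\tfrac{\ell^2\tau^2}{\sqrt2}\varepsilon^{-1}\widehat W_d$ up to $O(\varepsilon)$ corrections.
\item To identify the $\zeta$-coefficient with the one in $g_{1,d}$, use $\delta\kappa=-t^*/(1-(t^*)^2)$. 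This gives
\[
\delta\Bigl(\frac1\kappa-2\delta t^*\Bigr)=\frac{\delta}{\kappa}\,\frac{1+(t^*)^2}{1-(t^*)^2}=\frac{\tau^2}{\delta\kappa\sigma_{t^*}^2}.
\]
\item Everything else goes into $g_{2,d}$: $\beta\xi_1(\|P_z(b)\|-\tau\sqrt d)=O(\varepsilon|\zeta\xi_1|)$, terms of order $\beta^2(1+\xi_1^2)$, the cross term of order $\beta^3\sqrt d$ (which is $O(1)$), the $\zeta$- and $\widehat W_d$-fluctuations of the $\beta^4 d$ and $\|v\|^4/r^2$ terms (of order $\beta^4\sqrt d$), the second-order Taylor remainders in $\zeta/\sqrt d$ (of order $\beta^2\zeta^2$), and the cubic remainder, which is bounded deterministically by $O(\beta^6d)$.
\end{itemize}
Each of these pieces is a polynomial in $(\zeta,\xi_1,\widehat W_d)$ with bounded coefficients, times bounded factors. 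So $\sup_{|\zeta|\le L}\E_\zeta[|g_{2,d}|^m\mathds1_{\Omega_d}]=O(1)$ follows from Gaussian and $\chi^2$ moment bounds.

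\emph{Step 2: expansion of $\Rcancel$.} Use $\Rcancel=Jg+Q$ from \eqref{eq:R_JQ_decomposition} together with the exact coefficient expressions in Lemma~\ref{lem:coeff_general}.
\begin{itemize}
\item Conditionally, $J=C_*\ell^2\varepsilon^2+J'$ with $|J'|\le C(\varepsilon^4+\varepsilon^2d^{-1/2}(|\zeta|+|\widehat W_d|+\xi_1^2+1))$. The $\widehat W_d$-dependence enters through $\alpha/r$.
\item Then $Jg=C_*\ell^2(g_0+\varepsilon g_{1,d})+C_*\ell^2\varepsilon^2g_{2,d}+J'g$. On $\Omega_d$ we have $J'g=O(\varepsilon^2)$ in every $L^m$, because $\varepsilon^4\cdot\varepsilon^{-2}=\varepsilon^2$ and $d^{-1/2}=\varepsilon^3$.
\item Also $Q=O(\beta^2)\,g^2/r^2=O(\varepsilon^4)$.
\end{itemize}
This gives $R_{2,d}$ with $L^m$ norm $O(\varepsilon^2)$, uniformly in $|\zeta|\le L$, by Hölder's inequality.

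\emph{Step 3: the bad event.} The bound \eqref{eq:bad_event_increment_bound} is elementary. We have $t\in[-1,1]$ before and after the step, and $\Delta\zeta=0$ on rejection or projection failure. Hence $|\Delta\zeta|=\sqrt d\,|\Delta t|\le2\sqrt d$, and combining with $\mathbb P(\Omega_d^\complement)\le e^{-cd}$ from Lemma~\ref{lem:good_event} gives the claim.

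The main obstacle I expect is bookkeeping rather than any single idea. The difficulty is making sure that no further $\varepsilon^{-1}$ term hides among the remainders, for example a $\zeta$-dependence of $\Lambda$ or of the $\beta^4 d$ drift tail. Each such candidate has to be checked to be $O(\beta^4\sqrt d)=o(1)$, and the uniformity in $|\zeta|\le L$ of all Taylor constants has to be verified.
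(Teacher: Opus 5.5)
Your proposal is correct and follows essentially the same route as the paper. You expand each term of \eqref{eq:g_assembled} conditionally on $\zeta$ via the exact identities for $\|P_z(b)\|^2$ and $\langle\nhat,b\rangle/r$, identify the $\varepsilon^{-2}$ and $\varepsilon^{-1}$ coefficients (your $\delta(1/\kappa-2\delta t^*)$ equals the paper's $2\delta(\kappa\tau^2+1/(2\kappa))$), write $R_{2,d}=C_*\ell^2\varepsilon^2g_{2,d}+J'g+Q$ with $J'=O(\varepsilon^4)$ in every $L^p$ (your explicit bound excluding an $\varepsilon^3$ term is exactly the point the paper flags), and bound the bad event by $(2\sqrt d)^m\,\mathbb P(\Omega_d^{\complement})$.
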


\begin{pf}
Fix $L>0$ and $1\le m<\infty$. For $|\zeta|\le L$, we can write $t=t^*+\zeta\varepsilon^3$. All bounds below hold uniformly over this set. We use the event $\Omega_d$ of Lemma~\ref{lem:good_event}. It depends only on $(\xi_1,W)$ and satisfies
$\mathbb P(\Omega_d^{\complement})\le e^{-cd}$ with $c>0$. For all
sufficiently large $d$, the critical step size $\beta=\ell\varepsilon$ is below the constant $\beta_0$ of that lemma; hence, pathwise on $\Omega_d$, $\|v\|^2\le r^2/2$ and the third-order expansion
\eqref{eq:alpha_third_order_app} is valid.

In the notation of \eqref{eq:diffusion_bookkeeping}, the identities \eqref{eq:nb_expansion} and \eqref{eq:P_expansion} exactly read
\begin{equation}
    \|P_z(b)\|^2=\tau^2\varepsilon^{-6}
        +2\delta\kappa\tau^2\zeta\varepsilon^{-3}
        -\delta^2\zeta^2,
 \qquad
 \frac{\langle\nhat,b\rangle}{r}
   =-\tau^2+\frac{\delta\zeta}{\kappa}\varepsilon^3\,.
 \label{eq:pz2_nb_conditional_blocks}
\end{equation}
Taylor expansion of the square root, uniformly for $|\zeta|\le L$, gives in addition
\begin{equation}
\|P_z(b)\| =\tau\varepsilon^{-3} +\delta\kappa\tau\zeta +O(\varepsilon^3).
\label{eq:pz_conditional_blocks}
\end{equation}
We also have
\begin{align}
\gamma &=-\tfrac12\ell^2\varepsilon^2 -\tfrac18\ell^4\varepsilon^4 + O(\varepsilon^6)\,,
\label{eq:gamma_conditional_blocks}\\
W &=\varepsilon^{-6}-2 +\sqrt2\varepsilon^{-3}\sqrt{1-2\varepsilon^6}\,\widehat W_d.
\label{eq:W_conditional_blocks}
\end{align}
For every fixed $p<\infty$, $\sup_{d\ge3}\E|\widehat W_d|^p<\infty$. For convenience, set $V_d:=\|v\|^2$. Then expanding the exact scalar reduction \eqref{eq:scalar_reduction} and using \eqref{eq:pz_conditional_blocks}--\eqref{eq:W_conditional_blocks} gives
\begin{equation}
V_d=\ell^2\varepsilon^{-4} +\frac{1}{4}\ell^4\tau^2\varepsilon^{-2}
     +\sqrt2\ell^2\varepsilon^{-1}\widehat W_d+\widetilde V_d,
 \qquad \sup_{|\zeta|\le L} \E_\zeta\!\left[|\widetilde V_d|^p\right] = O(1)
 \label{eq:V_conditional_expansion}
\end{equation}
for every fixed $p$. On $\Omega_d$, substituting \eqref{eq:alpha_third_order_app} into
the scalar representation of $g$ gives
\begin{equation}
  g =\gamma\|P_z(b)\|^2+\beta\|P_z(b)\|\xi_1
    -\frac{\langle\nhat,b\rangle}{2r}V_d -\frac{\langle\nhat,b\rangle}{8r}\frac{V_d^2}{r^2}
    +\vartheta\frac{\langle\nhat,b\rangle}{r} \frac{V_d^3}{r^4},
  \qquad |\vartheta|\le1.
\label{eq:g_conditional_assembled}
\end{equation}
The terms are expanded as in the proof of Lemma~\ref{lem:g_refined} and so the first four terms in \eqref{eq:g_conditional_assembled} satisfy
\begin{align}
  \gamma\|P_z(b)\|^2
  &=-\tfrac12\ell^2\tau^2\varepsilon^{-4}
    -\tfrac18\ell^4\tau^2\varepsilon^{-2}
    -\ell^2\delta\kappa\tau^2\zeta\varepsilon^{-1}
    +\mathcal E_{1,d},
    \label{eq:g_conditional_term1}\\
  \beta\|P_z(b)\|\xi_1
  &=\ell\tau\xi_1\varepsilon^{-2}
    +\mathcal E_{2,d},
    \label{eq:g_conditional_term2}\\
  -\frac{\langle\nhat,b\rangle}{2r}V_d
  &=\tfrac12\ell^2\tau^2\varepsilon^{-4}
    +\tfrac18\ell^4\tau^4\varepsilon^{-2} \notag\\
  &\quad
    +\left(
       -\frac{\ell^2\delta}{2\kappa}\zeta
       +\frac{\ell^2\tau^2}{\sqrt2}\widehat W_d
     \right)\varepsilon^{-1}
    +\mathcal E_{3,d},
    \label{eq:g_conditional_term3}\\
  -\frac{\langle\nhat,b\rangle}{8r}\frac{V_d^2}{r^2}
  &=\frac{\ell^4\tau^2}{8\kappa^2}\varepsilon^{-2}
    +\mathcal E_{4,d}\,,
  \label{eq:g_conditional_term4}
\end{align}
where, for $j\in\{1,\ldots,4\}$ the $\mathcal E_{j,d}$ capture all positive powers of $\varepsilon$, so that, for every fixed $p$,
\begin{equation}
\sup_{|\zeta|\le L} \E_\zeta\!\left[|\mathcal E_{j,d}|^p\mathds1_{\Omega_d}\right]
  =O(1).
  \label{eq:g_conditional_component_remainders}
\end{equation}
Finally, Lemma~\ref{lem:good_event} gives $V_d\le C_0\ell^2\varepsilon^{-4}$ on
$\Omega_d$, so for the last term \eqref{eq:g_conditional_assembled} we have the pathwise bound
\begin{equation}
\left|\vartheta\frac{\langle\nhat,b\rangle}{r} \frac{V_d^3}{r^4}\right|
 \le \frac{\delta}{\kappa}\, \frac{C_0^3\ell^6\varepsilon^{-12}}{\kappa^4\varepsilon^{-12}} = O(1).
 \label{eq:g_conditional_cubic}
\end{equation}

The order $\varepsilon^{-4}$ terms in \eqref{eq:g_conditional_term1} and
\eqref{eq:g_conditional_term3} cancel. At order $\varepsilon^{-2}$, the remaining coefficient is
\begin{equation}
\ell\tau\xi_1 +\tfrac18 \ell^4\tau^2 (s-1) = \ell\tau\xi_1+\Lambda\ell^4 = g_0\,,
\end{equation}
using $\Lambda$ as defined in \eqref{eq:g_critical_final}. At order $\varepsilon^{-1}$, the coefficient is
\begin{equation}
-\ell^2\delta\left(\kappa\tau^2+\frac{1}{2\kappa}\right)\zeta +\frac{\ell^2\tau^2}{\sqrt2}\,\widehat W_d = g_{1,d}.
\end{equation}
Defining $g_{2,d}$ by the exact remainder in \eqref{eq:g_conditional_expansion}, \eqref{eq:g_conditional_component_remainders} and \eqref{eq:g_conditional_cubic} prove the first bound in
\eqref{eq:conditional_remainder_bounds}.

On $\Omega_d$, use the split $\Rcancel=Jg+Q$ of \eqref{eq:R_JQ_decomposition}.  We require the leading term of $J$ with an $O(\varepsilon^4)$ remainder; in particular, there is no order $\varepsilon^3$ term. Substituting \eqref{eq:V_conditional_expansion} into the expansion \eqref{eq:alpha_third_order_app} of $\alpha / r$ and using $r^2=\kappa^2\varepsilon^{-6}$ gives
\begin{equation}
\frac{\alpha}{r} =-\frac{\ell^2}{2\kappa^2}\varepsilon^2+\widetilde A_d,
 \qquad \sup_{|\zeta|\le L} \E_\zeta\!\left[|\widetilde A_d|^p \mathds1_{\Omega_d} \right] = O(\varepsilon^{4p})\,,
 \label{eq:alpha_r_conditional}
\end{equation}
for every fixed $p$. The $\varepsilon^{-2}$ correction in $V_d$ and the leading term $V_d^2$ both contribute at order $\varepsilon^4$ to $\alpha/r$, all remaining terms are smaller. Moreover, the
order $\varepsilon^3$ correction in $\langle\nhat ,b\rangle/r$ is multiplied in $J$ by $(1-\rho)/(1+\rho)=\tfrac14\ell^2\varepsilon^2+O(\varepsilon^4)$ and therefore first contributes at order $\varepsilon^5$. Together with $1/(1+\rho)=\tfrac12+O(\varepsilon^2)$, these give
\begin{equation}
    J=C_*\ell^2\varepsilon^2 + \widetilde J_d, \qquad \sup_{|\zeta|\le L} \E_\zeta\!\left[|\widetilde J_d|^p \mathds1_{\Omega_d} \right] = O(\varepsilon^{4p})\,.
    \label{eq:J_conditional_expansion}
\end{equation}
From the expansion of $g$ and the previous remainder bounds, we have $\sup_{|\zeta|\le L} \E_\zeta\!\left[
|g|^p\mathds1_{\Omega_d} \right]=O(\varepsilon^{-2p})$. Hence
\begin{equation}
\sup_{|\zeta|\le L} \E_\zeta\!\left[
|Q|^p\mathds1_{\Omega_d} \right] =O(\varepsilon^{4p})\,.
\end{equation}
Moreover, since $R_0 + \varepsilon R_{1,d} = C_*\ell^2\varepsilon^2 \left( \varepsilon^{-2}g_0 + \varepsilon^{-1}g_{1,d} \right)$, the exact remainder in \eqref{eq:R_conditional_expansion} is
\begin{equation}
    R_{2,d} = C_*\ell^2\varepsilon^2g_{2,d} +\widetilde J_d\,g+Q.
\label{eq:R2_exact_decomposition}
\end{equation}
The first term has $m$-th moment $O(\varepsilon^{2m})$, the second has the same bound by H\"older and \eqref{eq:J_conditional_expansion}, and the last term has $m$-th $O(\varepsilon^{4m})$. This proves the second bound in \eqref{eq:conditional_remainder_bounds}.

For the final bound we have that the accepted chain always has $t_k,t_{k+1}\in[-1,1]$; a failed projection
is a rejection. Therefore, $|\Delta\zeta|=\sqrt d\,|t_{k+1}-t_k|\le2\sqrt d$ deterministically. By Lemma~\ref{lem:good_event}, $\mathbb P(\Omega_d^{\complement})\le e^{-cd}$, and this bound is independent of the current $\zeta$. Therefore,
\begin{equation}
    \sup_\zeta\E_\zeta\!\left[ |\Delta\zeta|^m\mathds1_{\Omega_d^{\complement}} \right]
    \le (2\sqrt d)^m \mathbb P(\Omega_d^{\complement}) \le
    2^m d^{m/2}e^{-cd}.
\end{equation}
This proves \eqref{eq:bad_event_increment_bound}.
\end{pf}

\subsubsection{Gaussian acceptance and conditional moments}
Write $\mathcal A(x):=1\wedge e^x$. The accepted scalar increment and the leading log acceptance ratio are both functions of $\xi_1$, so their moments cannot be factorised. The following identities account for that dependence.

\begin{lem}[Gaussian acceptance identities]
\label{lem:acceptance_moments_diffusion}
Let $\xi_1\sim\N(0,1)$, assume $C_*\ne0$, and let $R_0$ be as in \eqref{eq:g0_R0_diffusion}. Then
\begin{align}
    \E[\mathcal A(R_0)]&= 2\Phi(-\sigma_\ell/2),
\label{eq:A_moment_diffusion}\\
    \E\!\left[\left(\xi_1-\frac{C_*\tau\ell^3}{2}\right) \mathcal A(R_0) \right]&=0,
\label{eq:A_first_moment_disappears}\\
    \E\!\left[\left(\xi_1-\frac{C_*\tau\ell^3}{2}\right)^2 \mathcal A(R_0) \right] &=K(\sigma_\ell),
\label{eq:K_moment_diffusion}\\
\E\!\left[\left(\xi_1-\frac{C_*\tau\ell^3}{2}\right)e^{R_0}\mathds1_{\{R_0<0\}}\right]
&= \frac{C_*\tau\ell^3}{2}\Phi(-\sigma_\ell/2) -\operatorname{sgn}(C_*)\phi(\sigma_\ell/2)\,,
\label{eq:derivative_moment_diffusion}
\end{align}
where $K$ is defined in \eqref{eq:K_diffusion}. In particular, $K(\sigma_\ell)>0$.
\end{lem}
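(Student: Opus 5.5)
The approach is to reduce all four identities to one-dimensional Gaussian integrals in the single variable $Y := \xi_1 - C_*\tau\ell^3/2 \sim \N(-m,1)$, where $m := C_*\tau\ell^3/2$. By \eqref{eq:g0_R0_diffusion}, $R_0 = C_*\ell^2 g_0 = C_*\tau\ell^3\, Y = 2m\,Y$. The definition \eqref{eq:sigmaell_def} gives $|2m| = \sigma_\ell$, so $R_0\sim\N(-2m^2, 4m^2) = \N(-\sigma_\ell^2/2,\sigma_\ell^2)$. Identity \eqref{eq:A_moment_diffusion} then follows exactly as in the proof of Corollary~\ref{cor:acceptance_limit}.

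For the remaining identities I will split $\mathcal A(R_0) = \mathds 1_{\{R_0\ge 0\}} + e^{R_0}\mathds 1_{\{R_0<0\}}$ and use the exponential tilt identity. Multiplying the $\N(-m,1)$ density of $Y$ by $e^{2mY}$ gives exactly the $\N(m,1)$ density. Equivalently, $\E[f(Y)e^{2mY}] = \E[f(Y+2m)]$, which is the Gaussian reflection $y\mapsto -y$ mapping $\N(-m,1)$ to $\N(m,1)$.

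Assume first $C_*>0$, so $m>0$ and $\{R_0<0\}=\{Y<0\}$. Then
\[
\E[f(Y)\mathcal A(R_0)] = \E_{\N(-m,1)}[f\,\mathds 1_{\{y\ge0\}}] + \E_{\N(m,1)}[f\,\mathds 1_{\{y<0\}}],
\]
and by reflection the second term equals $\E_{\N(-m,1)}[f(-y)\mathds 1_{\{y>0\}}]$. For odd $f(y)=y$ the two pieces cancel, which proves \eqref{eq:A_first_moment_disappears}. For $f(y)=y^2$ we get $2\E[Y^2\mathds 1_{\{Y>0\}}]$ with $Y\sim\N(-m,1)$. Writing $Y = -m+Z$, the standard truncated moments $\E[\mathds 1_{\{Z>m\}}]=\Phi(-m)$, $\E[Z\mathds 1_{\{Z>m\}}]=\phi(m)$ and $\E[Z^2\mathds 1_{\{Z>m\}}]=m\phi(m)+\Phi(-m)$ give
\[
2\bigl[(1+m^2)\Phi(-m) - m\phi(m)\bigr].
\]
With $m=\sigma_\ell/2$ this is $(2+\sigma_\ell^2/2)\Phi(-\sigma_\ell/2) - \sigma_\ell\phi(\sigma_\ell/2) = K(\sigma_\ell)$, which is \eqref{eq:K_moment_diffusion}.

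For \eqref{eq:derivative_moment_diffusion} I use the tilt directly: $\E[Ye^{R_0}\mathds 1_{\{Y<0\}}] = \E[Y'\mathds 1_{\{Y'<0\}}]$ with $Y'\sim\N(m,1)$. This equals $m\Phi(-m)-\phi(m)$, matching the claimed right-hand side when $\operatorname{sgn}C_*=1$.

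The case $C_*<0$ is handled by the same computation with $m<0$ and $\{R_0<0\}=\{Y>0\}$. Everything is symmetric in $|m|$ except the last identity. There $\E[Y'\mathds 1_{\{Y'>0\}}]$ with $Y'\sim\N(m,1)$ equals $m\Phi(m)+\phi(m)$. Since $\Phi(m) = \Phi(-|m|)$ and $\phi(m)=\phi(\sigma_\ell/2)$, this is $\tfrac{C_*\tau\ell^3}{2}\Phi(-\sigma_\ell/2)+\phi(\sigma_\ell/2)$, which gives the $-\operatorname{sgn}(C_*)$ sign.

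Finally, positivity of $K(\sigma_\ell)$ follows because $K(\sigma_\ell) = 2\E[Y^2\mathds 1_{\{Y>0\}}]$ (in the respective case) is the expectation of a nonnegative variable that is positive with positive probability.

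The main obstacle is careful sign bookkeeping across the two cases of $C_*$, particularly in \eqref{eq:derivative_moment_diffusion}. I will do the computation once for general $m\neq0$ and then specialise.
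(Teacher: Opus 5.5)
Your proof is correct and follows essentially the same route as the paper. Both split $\mathcal A(R_0)$ into the indicator of $\{R_0\ge0\}$ plus $e^{R_0}$ on $\{R_0<0\}$, absorb $e^{R_0}$ into the Gaussian density by completing the square, and use a reflection to cancel the odd moment and truncated Gaussian moments for the rest; the only cosmetic difference is that the paper reduces to $C_*>0$ via the symmetry $(\xi_1,C_*)\mapsto(-\xi_1,-C_*)$, whereas you handle both sign cases explicitly in the centred variable $Y$.
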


\begin{pf}
Set $a:=C_*\tau\ell^3$ so that $R_0=a\xi_1-a^2 /2$ and $\sigma_\ell=|a|$. The transformation $(\xi_1,a)\mapsto(-\xi_1,-a)$ preserves the law of $\xi_1$ and leaves $R_0$ unchanged, while $\xi_1-\frac a2 \,\mapsto\, -\left(\xi_1-\frac a2\right)$. So identities involving the square of this quantity are unchanged and the identities involving its first power change sign on both
sides. It therefore suffices to treat $a>0$, and then $\sigma_\ell=a$.

For $a>0$, $R_0<0$ exactly when $\xi_1<a/2$ and so $\mathcal A(R_0) = \mathds1_{\{\xi_1\ge a/2\}} +e^{R_0}\mathds1_{\{\xi_1<a/2\}}$. Completing the square gives $e^{ax-a^2/2}\phi(x)=\phi(x-a)$. Consequently, for each of the polynomial functions $F$ used below,
\begin{equation}
\E\!\left[F(\xi_1)\mathcal A(R_0)\right] = \int_{a/2}^{\infty}F(x)\phi(x)\,dx +\int_{-\infty}^{-a/2}F(y+a)\phi(y)\,dy,
\label{eq:acceptance_master}
\end{equation}
where the second integral follows from the substitution $y=x-a$. Taking $F\equiv1$ in \eqref{eq:acceptance_master} proves \eqref{eq:A_moment_diffusion}. Taking $F(x)=x-a/2$ and substituting $y=-x$ in the second integral of \eqref{eq:acceptance_master}, the two integrals cancel and so we have $\E\!\left[\left(\xi_1-\frac a2\right)\mathcal A(R_0)\right]=0$, showing~\eqref{eq:A_first_moment_disappears}.
For \eqref{eq:K_moment_diffusion}, set $c:=a/2$ and take
$F(x)=(x-c)^2$ in \eqref{eq:acceptance_master}. By symmetry of $\phi$,
the two integrals are equal, and using truncated Gaussian moments we get
\begin{align*}
\E\!\left[\left(\xi_1-\frac a2\right)^2\mathcal A(R_0)\right] &= 2\int_c^\infty(x-c)^2\phi(x)\,dx\\
&=2\bigl[(1+c^2)\Phi(-c)-c\phi(c)\bigr]\\
&=\left(2+\frac{\sigma_\ell^2}{2}\right) \Phi(-\sigma_\ell/2) -\sigma_\ell\phi(\sigma_\ell/2) = K(\sigma_\ell)\,.
\end{align*}
The random variable $(\xi_1-a/2)^2\mathcal A(R_0)$ is nonnegative and a.s.~not zero, so $K(\sigma_\ell)>0$.
Finally, the same change of variables gives
\begin{equation}
\E\!\left[\left(\xi_1-\frac a2\right)e^{R_0}\mathds1_{\{R_0<0\}}\right]= \int_{-\infty}^{-a/2}\left(y+\frac a2\right)\phi(y)\,dy = \frac a2\Phi(-a/2)-\phi(a/2).
\end{equation}
For general sign of $C_*$ the symmetry above gives a factor $\operatorname{sgn}(C_*)$ on the second term which proves \eqref{eq:derivative_moment_diffusion}.
\end{pf}

\begin{prop}[Conditional drift, variance, and large jumps]
\label{prop:conditional_moments_diffusion}
As $d\to\infty$, for every $L>0$,
\begin{align}
    \sup_{|\zeta|\le L}\left|d^{1/3}\E_\zeta[\Delta\zeta]+\frac{h(\ell)}{2\sigma_{t^*}^2}\zeta\right| &\myrightarrow{} 0\,,
\label{eq:conditional_drift_diffusion}\\
    \sup_{|\zeta|\le L} \left|d^{1/3}\E_\zeta[(\Delta\zeta)^2]-h(\ell)\right| &\myrightarrow{} 0\,,
\label{eq:conditional_variance_diffusion}\\
    \sup_{|\zeta|\le L}d^{1/3}\E_\zeta[(\Delta\zeta)^4] &\myrightarrow{} 0\,,
\label{eq:conditional_fourth_diffusion}
\end{align}
where $h$ is defined in \eqref{eq:h_diffusion}.
\end{prop}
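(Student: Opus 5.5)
The plan is to work directly from the locally uniform expansion of Lemma~\ref{lem:conditional_expansion_diffusion}, with $\varepsilon=d^{-1/6}$, so that $d^{1/3}=\varepsilon^{-2}$ and $\sqrt d=\varepsilon^{-3}$. First I split every conditional moment according to $\Omega_d$ and $\Omega_d^{\complement}$. By \eqref{eq:bad_event_increment_bound}, the $\Omega_d^{\complement}$ part of $d^{1/3}\E_\zeta[|\Delta\zeta|^m]$ is at most $2^m d^{1/3+m/2}e^{-cd}\to0$, uniformly in $\zeta$. On $\Omega_d$ the projection succeeds. Integrating out the uniform variable $u$ at fixed $(\zeta,\xi_1,W)$ replaces $\mathds1_{\mathrm{acc}}$ by $\mathcal A(\Rcancel)$. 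Then \eqref{eq:delta_zeta} together with \eqref{eq:g_conditional_expansion} gives
\begin{equation*}
d^{1/3}\E_\zeta\bigl[(\Delta\zeta)^m\mathds1_{\Omega_d}\bigr]
=\frac{\varepsilon^{m-2}}{(\delta\kappa)^m}\,
\E_\zeta\!\left[\mathcal A(\Rcancel)\,\bigl(g_0+\varepsilon g_{1,d}+\varepsilon^2 g_{2,d}\bigr)^m\mathds1_{\Omega_d}\right].
\end{equation*}
All the random ingredients $g_0$, $g_{1,d}$, $g_{2,d}$ and $R_{2,d}$ have $\zeta$-uniform moment bounds on $|\zeta|\le L$. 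This immediately handles two of the three claims. For $m=4$ the prefactor is $\varepsilon^2$ and the expectation is $O(1)$, which gives \eqref{eq:conditional_fourth_diffusion}. For $m=2$ the prefactor is $1$, and replacing $\mathcal A(\Rcancel)$ by $\mathcal A(R_0)$ costs at most $\E|\Rcancel-R_0|\cdot O(1)=O(\varepsilon)$, since $\mathcal A$ is $1$-Lipschitz; we also use Hölder together with \eqref{eq:conditional_remainder_bounds}. We also drop the $\varepsilon g_{1,d}$ and $\varepsilon^2g_{2,d}$ terms at cost $O(\varepsilon)$, and use \eqref{eq:K_moment_diffusion}, giving the limit $(\delta\kappa)^{-2}\ell^2\tau^2K(\sigma_\ell)=h(\ell)$, which is \eqref{eq:conditional_variance_diffusion}.

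The drift is the delicate case, because the prefactor for $m=1$ is $\varepsilon^{-1}$. Here I expand $\mathcal A(\Rcancel)=\mathcal A(R_0)+\varepsilon R_{1,d}\,\mathcal A'(R_0)+E_d$, where $\mathcal A'(x)=e^{x}\mathds1_{\{x<0\}}$. The $\varepsilon^{-1}$ term is $\varepsilon^{-1}\E[\mathcal A(R_0)g_0]$, which vanishes exactly by \eqref{eq:A_first_moment_disappears}. The order-one terms are then
\begin{equation*}
\frac{1}{\delta\kappa}\Bigl(\E_\zeta[\mathcal A(R_0)g_{1,d}]+C_*\ell^2\,\E_\zeta[g_{1,d}]\,\E[\mathcal A'(R_0)g_0]\Bigr).
\end{equation*}
This factorises because $g_{1,d}$ depends only on $(\zeta,\widehat W_d)$ and is independent of $\xi_1$, while $\E[\widehat W_d]=0$. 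We therefore get $\E_\zeta[g_{1,d}]=-\ell^2\tau^2\zeta/(2\delta\kappa\sigma_{t^*}^2)$. The first expectation equals $2\Phi(-\sigma_\ell/2)\,\E_\zeta[g_{1,d}]$ by \eqref{eq:A_moment_diffusion}, and the second uses \eqref{eq:derivative_moment_diffusion}. I then check algebraically that, with $a=C_*\tau\ell^3$,
\begin{equation*}
2\Phi(-|a|/2)+a\bigl(\tfrac a2\Phi(-|a|/2)-\operatorname{sgn}(a)\phi(a/2)\bigr)=K(\sigma_\ell).
\end{equation*}
This gives the drift $-\ell^2\tau^2K(\sigma_\ell)\zeta/(2(\delta\kappa)^2\sigma_{t^*}^2)=-h(\ell)\zeta/(2\sigma_{t^*}^2)$. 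As a sanity check, this is exactly the Stein-type relation between drift, speed and the Gaussian stationary law.

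The main obstacle is controlling $\varepsilon^{-1}\E_\zeta[E_d\,g_0]$, because $\mathcal A$ is not $C^1$ at $0$. The second-order error $E_d$ splits into three pieces. The first is $|R_{2,d}|$, of order $O(\varepsilon^2)$ in every $L^m$. The second is a smooth-region Taylor error bounded by $\varepsilon^2R_{1,d}^2$. The third is a kink term, supported on $\{|R_0|\le\varepsilon|R_{1,d}|+|R_{2,d}|\}$ and bounded by $\varepsilon|R_{1,d}|+|R_{2,d}|$ there. For the kink term I would condition on $(\zeta,\widehat W_d)$ and use that $R_0=a\xi_1-a^2/2$ has a density bounded by $1/(|a|\sqrt{2\pi})$, with $a\ne0$ since $C_*\ne0$. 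The strip then has conditional probability $O(\varepsilon|R_{1,d}|)$, up to a Hölder-controlled contribution from $R_{2,d}$ via a Markov bound on $\{|R_{2,d}|>\varepsilon^{3/2}\}$. This makes $\E_\zeta[|E_d||g_0|]=O(\varepsilon^{3/2})$ uniformly on $|\zeta|\le L$, so after multiplication by $\varepsilon^{-1}$ it vanishes. The remaining terms, namely $\E[\mathcal A(\Rcancel)\,\varepsilon g_{2,d}]$ and $\varepsilon R_{1,d}\mathcal A'(R_0)\cdot\varepsilon g_{1,d}$, are $O(\varepsilon)$ by the moment bounds, which completes \eqref{eq:conditional_drift_diffusion}.
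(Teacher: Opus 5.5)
Your proposal is correct and follows the paper's proof essentially step by step. The shared steps are: restricting to $\Omega_d$ via \eqref{eq:bad_event_increment_bound}; integrating out the uniform to obtain $\mathcal A(\Rcancel)$; expanding $\mathcal A$ to first order around $R_0$, with the $\varepsilon^{-1}$ term killed by \eqref{eq:A_first_moment_disappears} and the order-one terms assembled into $K(\sigma_\ell)$ via \eqref{eq:A_moment_diffusion} and \eqref{eq:derivative_moment_diffusion}; and using Lipschitz, H\"older and the moment bounds for the second and fourth moments.

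The only difference is the kink term. The paper avoids your anti-concentration argument for $R_0$ by noting that on the kink event $|g_0|=|R_0|/(|C_*|\ell^2)\le|\varepsilon R_{1,d}+R_{2,d}|/(|C_*|\ell^2)$. Hence $g_0$ times the kink error is pathwise bounded by a multiple of $|\varepsilon R_{1,d}+R_{2,d}|^2$, i.e.\ $O(\varepsilon^2)$ in $L^1$. This route needs neither the density of $R_0$ nor the independence of $R_{1,d}$ from $\xi_1$.
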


\begin{pf}
By \eqref{eq:bad_event_increment_bound}, it suffices to work on $\Omega_d$. For every $H$ measurable with respect to $(\xi_1,W)$, integrating over $U$ gives $\E[H\mathds1_{\mathrm{acc}}\mid\xi_1,W] = H\mathcal A(\Rcancel)$. The function $\mathcal A$ is Lipschitz, has a.e. derivative $\mathcal A'(x)=e^x\mathds1_{\{x<0\}}$. Using \eqref{eq:R_conditional_expansion}, it satisfies
\begin{align}
&\left| \mathcal A(\Rcancel)-\mathcal A(R_0) -(\varepsilon R_{1,d}+R_{2,d})\mathcal A'(R_0) \right| \notag\\
& \qquad \le \tfrac12|\varepsilon R_{1,d}+R_{2,d}|^2 +2|\varepsilon R_{1,d}+R_{2,d}| \mathds1_{\{|R_0|\le|\varepsilon R_{1,d}+R_{2,d}|\}} .
\label{eq:acceptance_kink_bound}
\end{align}
Since $R_0=C_*\ell^2g_0$, on the event in the second term of \eqref{eq:acceptance_kink_bound} we have $|g_0|\le |\varepsilon R_{1,d}+R_{2,d}|/(|C_*|\ell^2)$. Lemma~\ref{lem:conditional_expansion_diffusion} therefore justifies a first order expansion inside the drift and gives uniformly for $|\zeta|\le L$,
\begin{equation}
\E_\zeta[g\mathcal A(\Rcancel)] = \varepsilon^{-2}\E[g_0\mathcal A(R_0)] + \varepsilon^{-1}\left\{ \E[g_{1,d}\mathcal A(R_0)] +\E[g_0R_{1,d}e^{R_0}\mathds1_{\{R_0<0\}}]\right\} +O(1),
\label{eq:accepted_drift_expansion}
\end{equation}
where all remaining products are of smaller order. The first term in \eqref{eq:accepted_drift_expansion} vanishes by equation~\eqref{eq:A_first_moment_disappears}. For the braces term in \eqref{eq:accepted_drift_expansion} we have
\begin{align*}
&\E[g_{1,d}\mathcal A(R_0)] +\E[g_0R_{1,d}e^{R_0}\mathds1_{\{R_0<0\}}]\\
&\quad= -\frac{\ell^2\tau^2}{2\delta\kappa\sigma_{t^*}^2}\,
 \zeta\Biggl\{\E[\mathcal A(R_0)]+C_*\tau\ell^3
 \E\!\left[\left(\xi_1-\frac{C_*\tau\ell^3}{2}\right)e^{R_0}\mathds1_{\{R_0<0\}}\right]\Biggr\}\\
 &\quad=-\frac{\ell^2\tau^2}{2\delta\kappa\sigma_{t^*}^2}\,\zeta
 \left[\left(2+\frac{\sigma_\ell^2}{2}\right)\Phi\!\left(-\frac{\sigma_\ell}{2}\right)
 -\sigma_\ell\phi\!\left(\frac{\sigma_\ell}{2}\right)\right]\\
 &\quad=-\frac{\ell^2\tau^2}{2\delta\kappa\sigma_{t^*}^2}\,\zeta K(\sigma_\ell).
\end{align*}
Here the second equality follows from~\eqref{eq:A_moment_diffusion} and~\eqref{eq:derivative_moment_diffusion}, with $C_*\tau\ell^3 \operatorname{sgn}(C_*) = |C_*|\tau\ell^3=\sigma_\ell$. Using \eqref{eq:delta_zeta}, $d^{1/3}=\varepsilon^{-2}$, and \eqref{eq:h_diffusion} proves \eqref{eq:conditional_drift_diffusion}.
For the second moment, Lemma~\ref{lem:conditional_expansion_diffusion},
the Lipschitz property of $\mathcal A$, and H\"older's inequality give,
\begin{align*}
d^{1/3}\E_\zeta[(\Delta\zeta)^2]
&=\frac{\varepsilon^4}{(\delta\kappa)^2}\E_\zeta[g^2\mathcal A(\Rcancel)]\\
&=\frac{1}{(\delta\kappa)^2}\E[g_0^2\mathcal A(R_0)]+O(\varepsilon)\\
&=h(\ell)+O(\varepsilon),
\end{align*}
where the final equality follows from \eqref{eq:K_moment_diffusion} and \eqref{eq:h_diffusion}. This proves \eqref{eq:conditional_variance_diffusion}. Finally, the conditional expansion and its moment bounds give, $\E_\zeta[g^4\mathds1_{\Omega_d}]=O(\varepsilon^{-8})$. Using \eqref{eq:delta_zeta}, it follows that
\begin{equation*}
\sup_{|\zeta|\le L} \E_\zeta[(\Delta\zeta)^4\mathds1_{\Omega_d}] \le
\frac{\varepsilon^{12}}{(\delta\kappa)^4} \sup_{|\zeta|\le L}\E_\zeta[g^4\mathds1_{\Omega_d}] =O(\varepsilon^4).
\end{equation*}
The complement is negligible by \eqref{eq:bad_event_increment_bound} and multiplication by $d^{1/3}=\varepsilon^{-2}$ proves \eqref{eq:conditional_fourth_diffusion}.
\end{pf}

\subsubsection{Transverse and mixed conditional moments}
\label{app:transverse_diffusion_moments}

Write $\E_{\zeta,X}$ for one-step expectation conditional on the current pair, and let $X'=\langle z'-b,e_\perp\rangle$ denote the proposed transverse coordinate before acceptance (so that $X_{k+1} = X_k + \mathds1_{\mathrm{acc}} (X'-X_k)$). All bounds below are uniform over $|\zeta|,|X|\le L$. Choose the second tangent basis vector along the component of $P_ze_{\perp}$ orthogonal to $P_z(b)$. Since $t=t^*+\zeta\varepsilon^3$ and $X$ is bounded, this component is nonzero for all sufficiently large $d$. We now specifically choose, for the orthonormal completion used in~\eqref{eq:orthonormal-basis}, the vector $e_2$ along the component of $P_z e_\perp$ orthogonal to $e_1$, i.e.~$e_2 = (P_z e_\perp - \langle P_z e_\perp,e_1\rangle e_1)/\|P_z e_\perp - \langle P_z e_\perp,e_1\rangle e_1\|$, where the remaining $e_i$, $i\in\{3,\ldots,d-1\}$ complete an orthonormal basis of $T_z \Mc$, with $\xi_i=\langle \xi, e_i\rangle$. Then

\begin{equation}
    \langle P_z\xi,e_{\perp}\rangle =-\frac{tX}{r\sqrt{1-t^2}}\,\xi_1 + \sqrt{1-\frac{X^2}{r^2(1-t^2)}}\,\xi_2, \qquad W=\xi_2^2+\widetilde W,
 \label{eq:joint_noise_decomp}
\end{equation}
where $\widetilde W\sim\chi^2_{d-3}$ and $\xi_1,\xi_2\sim\N(0,1)$ are mutually independent. In particular, the conditional law of $(\xi_1,W)$ is unchanged, so Proposition~\ref{prop:conditional_moments_diffusion} also holds with $\E_\zeta$ replaced by $\E_{\zeta,X}$. On a successful projection, $\langle P_z(b),e_{\perp}\rangle=-\delta tX/\kappa$ gives
\begin{equation}
    X'-X = \langle v,e_\perp\rangle + \alpha \langle \nhat,e_\perp\rangle =  \left(\frac{\alpha}{r}-\frac{\gamma\delta t}{\kappa}\right)X +\beta\langle P_z\xi,e_{\perp}\rangle.
 \label{eq:X_update_pcn}
\end{equation}
By \eqref{eq:alpha_r_conditional}, $\gamma=-\ell^2\varepsilon^2/2+O(\varepsilon^4)$ and $\delta t^*/\kappa=-\tau^2$, the coefficient of $X$ is $-s\ell^2\varepsilon^2/2+\widetilde A_d+O(\varepsilon^4)$ on $\Omega_d$. The two noise coefficients in front of $\xi_1$ and $\xi_2$ in~\eqref{eq:joint_noise_decomp} are $O(\varepsilon^3)$ and $1+O(\varepsilon^6)$, respectively. Hence, for every fixed $p\ge1$,
\begin{equation}
    X'-X=\ell\varepsilon \xi_2-\frac{s\ell^2}{2}\varepsilon^2X+\eta_d,
    \qquad
    \sup_{|\zeta|,|X|\le L}\E_{\zeta,X}[|\eta_d|^p\mathds1_{\Omega_d}] =O(\varepsilon^{4p}).
\label{eq:transverse_proposal_expansion}
\end{equation}
The transformation $\xi_2\mapsto-\xi_2$ leaves $(\xi_1,W)$, and hence $g$, $\Rcancel$ and $\Omega_d$ unchanged. Thus
\begin{equation}
\E_{\zeta,X}[H \xi_2\mathcal A(\Rcancel)\mathds1_{\Omega_d}]=0
\label{eq:transverse_sign_symmetry}
\end{equation}
for every integrable expression in which $H$ is a function of the state and $(\xi_1,W)$. Moreover, $R_0$ depends only on $\xi_1$, independently of $\xi_2$. The Lipschitz property of $\mathcal A$, Lemma~\ref{lem:conditional_expansion_diffusion} and Cauchy--Schwarz therefore give
\begin{equation}
\E_{\zeta,X}\bigl[\xi_2^p\,\mathcal A(\Rcancel)\,\mathds1_{\Omega_d}\bigr] = 2\Phi(-\sigma_\ell/2) +O(\varepsilon) ,\qquad p\in\{0,2\}.
\label{eq:transverse_acceptance_moments}
\end{equation}

Note that removing $\Omega_d$ from the corresponding expectations with $R_0$ is exponentially insignificant by the Gaussian moments and Lemma~\ref{lem:good_event}. Integrating out the acceptance uniform and applying \eqref{eq:transverse_proposal_expansion}--\eqref{eq:transverse_acceptance_moments} yields 
\begin{align}
    \E_{\zeta,X}[\Delta X] &=-\tfrac12 s h_\perp(\ell)X\varepsilon^2+O(\varepsilon^3),
\label{eq:transverse_drift}\\
    \E_{\zeta,X}[(\Delta X)^2] &= h_\perp(\ell)\varepsilon^2+O(\varepsilon^3),
\label{eq:transverse_variance}\\
    \E_{\zeta,X}[\Delta\zeta\,\Delta X] &= O(\varepsilon^3),
\label{eq:transverse_cross}\\
     \E_{\zeta,X}[(\Delta X)^4] &= O(\varepsilon^4).
\label{eq:transverse_fourth}
\end{align}

For the mixed moment, the leading $g\xi_2$ term vanishes by \eqref{eq:transverse_sign_symmetry}, the remaining terms are $O(\varepsilon^3)$ by \eqref{eq:conditional_variance_diffusion}, \eqref{eq:transverse_proposal_expansion} and H\"older's inequality. The fourth-moment bound follows directly from \eqref{eq:transverse_proposal_expansion}. All four estimates hold on $\Omega_d$, on its complement, $|\Delta X|\le2r$ and $|\Delta\zeta|\le2\sqrt d$, while its probability is exponentially small. Multiplication by $d^{1/3}=\varepsilon^{-2}$ gives the required transverse drift and variance limits, vanishing mixed covariance, and negligible large jumps.

\subsubsection{Completion of the process argument}

We use the following finite-dimensional diffusion criterion for a discrete chain accelerated by a sequence $c_d\to\infty$.

\begin{prop}[Ethier--Kurtz diffusion criterion,~\cite{ethier-kurtz:2009}]
\label{prop:EK_diffusion_criterion}
Let $c_d\to\infty$ be a positive sequence. For each $d$, let $(Y_k^{(d)})_{k\ge0}$ be a time-homogeneous Markov chain on $\RR^n$ for fixed $n$. Given $Y_k^{(d)}=x$, write $\Delta_d=Y_{k+1}^{(d)}-Y_k^{(d)}$ and define
\begin{equation}
    \mathsf b_d(x):=c_d\E_x[\Delta_d\mathds1_{\{|\Delta_d|\le1\}}]\,, \quad
    \mathsf a_d(x):=c_d\E_x[\Delta_d\Delta_d^\top \mathds1_{\{|\Delta_d|\le1\}}]\,, \quad
    \mathsf q_d^\eta(x):=c_d\mathbb P_x(|\Delta_d|\ge\eta)\,,
\end{equation}
with $\eta>0$. Assume the following:
\begin{enumerate}
    \item[(i)] for every $R,\eta>0$, the quantities $\mathsf b_d$, $\mathsf a_d$ and $\mathsf q_d^\eta$ converge uniformly on $\{|x|\le R\}$ to $b(x)$, $a(x)$ and zero respectively, where $b:\RR^n \to \RR^n$ and $a:\RR^n \to \RR^{n \times n}$ are continuous, with $a(x)$ symmetric and nonnegative definite;
    \item[(ii)] the $C([0,\infty),\RR^n)$ martingale problem for
    \begin{equation*}
        \mathcal LF(x)= \sum_{i=1}^n b_i(x)\partial_i F(x)+\tfrac12 \sum_{i,j=1}^n a_{ij}(x)\partial_{ij} F(x), \qquad F\in C_c^\infty(\RR^n),
    \end{equation*}
    has a unique solution for every initial law in $\mathcal P(\RR^n)$;
    \item[(iii)] there exists $\nu\in\mathcal P(\RR^n)$ such that $Y_0^{(d)}\myrightarrow{d}\nu$.
\end{enumerate}
Then, as $d\to\infty$,
\begin{equation*}
\bigl(Y^{(d)}_{\lfloor c_du\rfloor}\bigr)_{u\ge0}\;\Rightarrow\;Y \quad\text{in }D([0,\infty),\RR^n),
\end{equation*}
where $Y$ solves the martingale problem for $(\mathcal L,\nu)$.
\end{prop}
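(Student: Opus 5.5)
This is essentially Corollary~7.4.2 of \citet{ethier-kurtz:2009}, so the plan is to reproduce its generator-convergence argument in our notation rather than invent a new one. Fix $F\in C_c^\infty(\RR^n)$ and define the discrete generator $A_dF(x):=c_d\,\E_x[F(x+\Delta_d)-F(x)]$. Then
\begin{equation*}
M_d^F(u):=F\bigl(Y^{(d)}_{\lfloor c_du\rfloor}\bigr)-F\bigl(Y^{(d)}_0\bigr)-\frac{1}{c_d}\sum_{k<\lfloor c_du\rfloor}A_dF\bigl(Y^{(d)}_k\bigr)
\end{equation*}
is a martingale for each $d$, by the Markov property. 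The first step is to show that $A_dF\to\mathcal LF$ uniformly on compacts. On $\{|\Delta_d|\le1\}$ I will Taylor expand $F$ to second order: the first- and second-order terms give $\nabla F\cdot\mathsf b_d+\tfrac12\mathrm{tr}(\nabla^2F\,\mathsf a_d)$. For the third-order remainder I will split further at $|\Delta_d|\le\eta$. On $\{|\Delta_d|\le\eta\}$ the remainder is bounded by $C\eta\,c_d\E_x[|\Delta_d|^2\mathds1_{\{|\Delta_d|\le1\}}]=C\eta\,\mathrm{tr}\,\mathsf a_d$. On $\{\eta<|\Delta_d|\le 1\}$ it is bounded by $C\,\mathsf q_d^\eta$. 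The jumps with $|\Delta_d|>1$ contribute at most $2\|F\|_\infty\mathsf q_d^1$. By (i), letting $d\to\infty$ and then $\eta\to0$ gives the claimed convergence.

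The second step is a localisation. I will introduce the exit times $\tau_R^d$ from the ball $\{|x|\le R\}$ and work with the chains stopped at these times. For the stopped chains the generator bounds from step one hold uniformly in the state, because only states in the ball matter, together with a single jump out of it. Tightness in $D([0,\infty),\RR^n)$ of the stopped processes then follows from the martingale decomposition. Concretely, I will use the criterion of Ethier--Kurtz, Theorem~3.9.4, applied to the family $\{F(Y)\}$ with $F\in C_c^\infty$, which is dense enough; the bounded compensators $A_dF$ supply the required modulus estimate. Compact containment is immediate for stopped processes up to the exit jump, and that jump has size at most $1$ except on an event of probability $O(\mathsf q_d^1)\to0$ per unit time. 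The vanishing of $\mathsf q_d^\eta$ also forces every limit point to have continuous paths.

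The third step identifies the limit and removes the localisation; I expect this to be the main obstacle. Passing to the limit in the martingale property, using uniform convergence on compacts and the continuity of $b$ and $a$, shows that any limit point solves the stopped martingale problem for $\mathcal L$ with initial law $\nu$, by (iii). Well-posedness of the full $C([0,\infty))$ martingale problem in (ii) implies uniqueness of the stopped problems (Ethier--Kurtz, Theorem~4.6.1). It also implies that the solution does not explode, so $\mathbb P(\sup_{u\le T}|Y(u)|\ge R)\to0$ as $R\to\infty$. Combining this with the stopped convergence gives compact containment for the unstopped processes. The limit is then unique and equal to the solution of the $(\mathcal L,\nu)$ problem, which proves convergence in $D([0,\infty),\RR^n)$. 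A further technical point is that the time change $u\mapsto\lfloor c_du\rfloor$ is deterministic rather than Poissonised. I will handle this as in Ethier--Kurtz, Theorem~4.8.2: the two time changes differ by $O(c_d^{-1/2})$ on compact time intervals, so the Skorokhod distance between them vanishes.
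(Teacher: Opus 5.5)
Your proposal is correct and takes essentially the same route as the paper, which simply invokes \citet[Chapter~7, Corollary~4.2]{ethier-kurtz:2009} and notes that its proof only uses that the normalisation and the clock are the same number $c_d$. Your generator expansion, localisation by exit times, tightness, identification through the well-posed martingale problem and removal of the localisation are a faithful unpacking of that argument, with two small points: the Poissonisation step is unnecessary, because your $M_d^F$ is already a martingale for the deterministic clock $\lfloor c_du\rfloor$; and in the identification step a limit of the stopped chains need not itself be stopped at its own exit time, so, as in Ethier--Kurtz, you should compare it with the stopped solution only up to that exit time (or the exit time of a slightly smaller ball).
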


\begin{pf}
This is \citet[Chapter~7, Corollary~4.2]{ethier-kurtz:2009} with the integer clock $n$ replaced by a real sequence $c_d\to\infty$. The proof of this result uses only that the normalisation and the clock are the same positive number and not that this number is an integer and so this variant trivially follows by the same arguments.
\end{pf}

By Lemma~\ref{lem:scalar_markovianity}, $(\zeta_k,X_k)$ is a Markov chain. Its state space is
\begin{equation}
E_d=\left\{(\zeta,X):\ |t|\le1,\quad |X|\le r\sqrt{1-t^2},\quad t=t^*+\frac{\zeta}{\sqrt d}\right\},
\end{equation}
which contains every fixed compact subset of $\RR^2$ for all sufficiently large $d$. Extend the kernel by absorbing states outside $E_d$.

Take $c_d=d^{1/3}$. Proposition~\ref{prop:conditional_moments_diffusion} and \eqref{eq:transverse_drift}--\eqref{eq:transverse_fourth} give the untruncated conditional moments limits locally uniformly, with

\begin{equation}
b(\zeta,X)=\left(-\frac{h(\ell)}{2\sigma_{t^*}^2}\zeta,\, -\frac{s}{2}h_\perp(\ell)X\right),
\qquad
a=\begin{pmatrix}h(\ell)&0\\0&h_\perp(\ell)\end{pmatrix}.
\end{equation}
Since $|(\Delta\zeta,\Delta X)|^4\le2((\Delta\zeta)^4+(\Delta X)^4)$, we also have $c_d\E_{\zeta,X}|\Delta|^4\to0$ locally uniformly. This bounds the truncation errors in $\mathsf b_d$ and $\mathsf a_d$, and bounds $\mathsf q_d^\eta$ after multiplication by $\eta^{-4}$. Condition~(i) of Proposition~\ref{prop:EK_diffusion_criterion} follows. The limiting coefficients are globally Lipschitz so its martingale problem is well posed, giving condition~(ii).

For the initial law, conditional on $t$, write $X=r\sqrt{1-t^2}V_\perp$, where $V$ is uniform on the unit sphere in $b^\perp$ and independent of $t$ with $V_\perp=\langle V,e_{\perp}\rangle$. Writing $V=G/\|G\|$ with $G\sim\N(0,I_{d-1})$ gives $\sqrt{d-1}V_\perp\myrightarrow{d}\N(0,1)$. Also $r^2(1-t^2)/(d-1)\myrightarrow{p}s^{-1}$ by Lemma~\ref{lem:concentration_t} and \eqref{eq:s_two_forms}. Independence of $V$ and $t$ and Slutsky's theorem yields
\begin{equation}
(\zeta_0,X_0)\myrightarrow{d} \N(0,\sigma_{t^*}^2)\otimes\N(0,s^{-1}).
\label{eq:pair_initial_law}
\end{equation}
Thus condition~(iii) holds. Proposition~\ref{prop:EK_diffusion_criterion} with $n=2$ proves \eqref{eq:joint_process_convergence}. The diffusion matrix, separate linear drifts and product initial law identify the limit as the independent processes \eqref{eq:scalar_OU_limit} and \eqref{eq:transverse_OU_limit}. Their initial variances are invariant, so both processes are stationary.

\subsection{Proof of Corollary~\ref{cor:scaling_trichotomy}}
\label{app:scaling_trichotomy_proof}

By Lemma~\ref{lem:g_cancellation}, the expansion \eqref{eq:g_final_clean} holds for every $\beta_d\to0$. We use the decomposition $\Rcancel=Jg+Q$ of \eqref{eq:R_JQ_decomposition}. From \eqref{eq:g_final_clean}, since the terms $\beta_d$, $\beta_d^2\sqrt d$, $\beta_d^3\sqrt d$ and $\beta_d^6 d$ are dominated by $\beta_d\sqrt d$ and $\beta_d^4 d$ (for $\beta_d\le1$) so,
\begin{equation}
  g=O_p\,\bigl(\beta_d\sqrt d+\beta_d+\beta_d^2\sqrt d+\beta_d^3\sqrt d
              +\beta_d^4 d\bigr)
   =O_p\,\bigl(\beta_d\sqrt d\,(1+\lambda_d)\bigr).
  \label{eq:g_orderbound}
\end{equation}
With $r=\kappa\sqrt d$, we have $g^2 /r^2 = O_p\,\bigl(\beta_d^2(1+\lambda_d^2)\bigr)$ so, as $(1-\rho)/(1+\rho)=O(\beta_d^2)$ and $\beta_d\to0$,
\begin{equation*}
  Q=\frac{1-\rho}{1+\rho}\frac{g^2}{2r^2}
   =O_p\,\bigl(\beta_d^4(1+\lambda_d^2)\bigr)
   =o_p(\lambda_d+\lambda_d^2),
\end{equation*}
the last equality because $\beta_d^4/\lambda_d=\beta_dd^{-1/2}\to0$ and $\beta_d^4\lambda_d^2/\lambda_d^2=\beta_d^4\to0$. By Lemma~\ref{lem:coeff_general} (with $c_\beta=\gamma$) and $C_*\neq0$, $J=C_*\beta_d^2(1+o_p(1))$, so
\begin{equation*}
  Jg=C_*\beta_d^2 g+o_p(1)\cdot C_*\beta_d^2 g
    =C_*\beta_d^2 g+o_p(\lambda_d+\lambda_d^2),
\end{equation*}
the remainder being the product of $o_p(1)$ with $C_*\beta_d^2 g=O_p(\lambda_d+\lambda_d^2)$. Multiplying \eqref{eq:g_final_clean} by $C_*\beta_d^2$ and using $\beta_d^3\sqrt d=\lambda_d$ produces the two leading terms $C_*\tau\lambda_d\xi_1$ and $C_*\Lambda\lambda_d^2$, while each remaining product is $o_p(\lambda_d+\lambda_d^2)$:
\begin{align*}
  C_*\beta_d^2\,O_p(\beta_d) &= O_p(\beta_d^3)= O_p\bigl(\lambda_d\, d^{-1/2}\bigr),\\
  C_*\beta_d^2\,O_p(\beta_d^2\sqrt d) &= O_p(\beta_d\lambda_d),\\
  C_*\beta_d^2\,O_p(\beta_d^3\sqrt d) &= O_p(\beta_d^2\lambda_d),\\
  C_*\beta_d^2\,O(\beta_d^6 d) &= O(\beta_d^2\lambda_d^2),
\end{align*}
each of which is $o_p(\lambda_d)$ or $o_p(\lambda_d^2)$ because
$\beta_d\to0$ and $d\to\infty$. Combining the bounds gives, for every $\beta_d\to0$,
\begin{equation}
\Rcancel=C_*\tau\,\lambda_d\,\xi_1+C_*\Lambda\,\lambda_d^2+o_p\,\bigl(\lambda_d+\lambda_d^2\bigr).
  \label{eq:Rcancel_uniform}
\end{equation}
The remainder is thus negligible relative to the retained terms in every regime and the trichotomy follows by inspection.

\paragraph{Subcritical, $\lambda_d\to0$.} We have that $\lambda_d+\lambda_d^2\to0$, so all three terms of \eqref{eq:Rcancel_uniform} tend to $0$ in probability and $\Rcancel\myrightarrow{p}0$. Since $0\le1 \wedge e^{\Rcancel}\le1$ and the integrand $ \myrightarrow{p} 1$, boundedness implies convergence in $L^1$, and hence $a_d(\beta_d)\to1$.

\paragraph{Critical, $\lambda_d\to\ell^3\in(0,\infty)$.} Then $\lambda_d+\lambda_d^2\to\ell^3+\ell^6\in(0,\infty)$, so the remainder is $o_p(1)$ and \eqref{eq:Rcancel_uniform} reads
$\Rcancel=C_*\tau\ell^3\xi_1+C_*\Lambda\ell^6+o_p(1)$, and the conclusion follows from Theorem~\ref{thm:gaussian_limit} and Corollary~\ref{cor:acceptance_limit}.

\paragraph{Supercritical, $\lambda_d\to\infty$.} Dividing
\eqref{eq:Rcancel_uniform} by $\lambda_d^2$ (so that $o_p(\lambda_d+\lambda_d^2)/\lambda_d^2 =o_p(\lambda_d^{-1}+1)=o_p(1)$),
\begin{equation*}
  \frac{\Rcancel}{\lambda_d^2} =\frac{C_*\tau\,\xi_1}{\lambda_d}+C_*\Lambda+o_p(1)
  \myrightarrow{p} C_*\Lambda=-\tfrac12 C_*^2\tau^2<0.
\end{equation*}
Since $\lambda_d^2\to\infty$, it follows that $\Rcancel \myrightarrow{p} -\infty$, so $1  \wedge e^{\Rcancel} \myrightarrow{p} 0$. Again, boundedness implies convergence in $L^1$, and therefore $a_d(\beta_d)\to0$.

\subsection{Proof of Theorem~\ref{thm:fixed_step}}
\label{app:thm:fixed_step_proof}

Take a fixed $\beta\in(0,1)$ and have $J_s$ denote the coefficient of $g$ in \eqref{eq:Rcancel_intrinsic}. We use the identities $\beta^2=-2\gamma-\gamma^2$ and $(\delta /\kappa)t^*=-\tau^2$ throughout. Using the expansions of $\|P_z(b)\|$ and $\langle\nhat,b\rangle$, the proposal \eqref{eq:intrinsic_proposal} and its normal projection give
\begin{align}
\frac{\|v\|^2}{d}
  &= \left(\frac{\gamma\delta}{s}\sqrt{1-t^2} +\frac{\beta}{\sqrt s}\frac{\xi_1}{\sqrt d}\right)^2
  +\frac{\beta^2}{s}\frac Wd,\\
    \frac{\alpha}{r}
  &=-1+\sqrt{1-\frac{\|v\|^2}{\kappa^2d}},\\
  \frac gd
  &=
  \frac{\gamma\delta^2}{s}(1-t^2)+\frac{\beta\delta}{\sqrt s}\sqrt{1-t^2} \frac{\xi_1}{\sqrt d}
  +\frac{\alpha}{r}\delta\kappa t,\\
  J_s&=
  \frac{1-\rho}{1+\rho}+\frac{\alpha/r}{1+\rho}+\frac{1-\rho}{1+\rho}\frac{\delta t}{\kappa s}.
\label{eq:three_scalars}
\end{align}
At $(t,\xi_1/\sqrt d,W/d)=(t^*,0,1)$, i.e. where the three statistics sit in the limit, we have
\begin{equation}
1-\frac{1}{\kappa^2}\left(\frac{\gamma^2\tau^2}{s^2} +\frac{\beta^2}{s}\right)
  =\left(1+\frac{\gamma}{s\kappa^2}\right)^2=:q^2.
\end{equation}
Since $-1<\gamma<0$ and $s\kappa^2>1$, we have $0<q<1$. At this same point, $\alpha/r=q -1=\gamma/(s\kappa^2)$ and
\begin{equation}
\frac gd=\frac{\gamma}{s}\left(\tau^2+\frac{\delta}{\kappa}t^*\right)=0,
  \qquad
  J_s=\frac{\gamma}{1+\rho}\left(-1+\frac{\tau^2+\kappa^{-2}}s\right)=0.
\end{equation}
Thus the scalar functions in \eqref{eq:three_scalars} are smooth in a neighbourhood of this point. Stationary concentration, $\xi_1/\sqrt d\to0$ and $W/d\to1$ imply $\|v\|^2 / r^2\myrightarrow{p}1-q^2<1$. The projection therefore fails with probability tending to zero.

Lemma~\ref{lem:concentration_t} and the $\chi^2$ CLT give
\begin{equation}
\sqrt d \left(t-t^*,\,\frac{\xi_1}{\sqrt d},\,\frac Wd-1\right) \myrightarrow{d} (\zeta,\xi_1,\sqrt2\,\omega)
\end{equation}
where the limiting variables are independent and have the Gaussian laws given in the theorem. Applying the multivariate delta method to \eqref{eq:three_scalars}, using the two cancellations above, yields jointly
\begin{equation}
\left(\frac g{\sqrt d},\,\sqrt d\,J_s\right) \myrightarrow{d}(\bar g,\bar J_s).
\label{eq:joint_limit}
\end{equation}
Differentiation gives the coefficients in \eqref{eq:gbar} and \eqref{eq:Jbar} and their simpler forms uses $q s+\gamma\tau^2=s\rho$ and the identities stated at the beginning of the proof.

Since $r^2=\kappa^2d$, the exact log acceptance ratio can be written as
\begin{equation}
  \Rcancel =(\sqrt d\,J_s)\frac g{\sqrt d}
    +\frac{1-\rho}{1+\rho}\frac{1}{2s\kappa^2} \left(\frac g{\sqrt d}\right)^2\\
\myrightarrow{d}
    \bar J_s\bar g +\frac{1-\rho}{1+\rho}\frac{\bar g^2}{2s\kappa^2}=\bar R.
\end{equation}
These convergences hold jointly, with failed projections resulting in an event of vanishing probability. Assigning zero acceptance to failed projections, boundedness and continuity of $x\mapsto 1 \wedge e^x$ give \eqref{eq:fixed_step_acceptance}. The limit is positive because $\bar R$ is almost surely finite. To see that it is strictly below one, evaluate it at $(\zeta,\xi_1,\omega)=(0,0,1)$ which gives
\begin{equation}
    \bar R = \frac{\gamma\beta^2\tau^2}{2s^2\kappa^2q^2} \left(1+\frac{\gamma\tau^2}{2s}\right)<0.
\end{equation}
Indeed, $\gamma<0$ and the parenthesis is greater than $1/2$. Continuity and full support of the Gaussian law imply $\mathbb P(\bar R<0)>0$, hence $a(\beta)<1$. This proves part~(i).

For part~(ii), write $\zeta',X'$ for the proposed coordinates. On a successful projection we have,
\begin{align}
  \zeta'&=\zeta+\frac{g}{\delta\kappa\sqrt d},\label{eq:zeta_update}\\
  X' &=
  \left(1+\frac{\alpha}{r}-\frac{\gamma\delta t}{s\kappa}\right)X
  +\frac{\beta}{\sqrt s}\langle P_z\xi,e_{\perp}\rangle.\label{eq:X_update}
\end{align}
The second identity follows from $\langle P_z(b),e_{\perp}\rangle=-\delta tX/\kappa$. Its coefficient of $X$ converges to $q+\gamma\tau^2/s=\rho$. The joint noise decomposition \eqref{eq:joint_noise_decomp} depends only on the tangent geometry and applies unchanged to this proposal. The coefficients of $\xi_1$ and $\xi_2$ in \eqref{eq:joint_noise_decomp} converge to 0 and 1 respectively, as $d \to \infty$, uniformly on compact sets of $(\zeta,X)$. Since $\xi_2^2/\sqrt d\myrightarrow{p}0$, the $\chi^2$ CLT gives
\begin{equation}
  \left(\xi_1,\langle P_z\xi,e_{\perp}\rangle,\frac{W-d}{\sqrt{2d}} \right) \myrightarrow{d}(\xi_1,\xi_2, \omega),
\label{eq:joint_noise_limit}
\end{equation}
where $\xi_1,\xi_2,\omega\sim\N(0,1)$ are independent. The Taylor expansions used to obtain \eqref{eq:joint_limit} have remainders that vanish in probability uniformly on compact sets of $(\zeta,X)$. Smoothness gives uniform remainder bounds when the noise fluctuations are bounded, and tightness controls the probability of larger fluctuations. Together with \eqref{eq:joint_noise_limit}, we have
\begin{equation}
  \left(\zeta',X',\Rcancel\right) \myrightarrow{d}
  \left(\zeta+\frac{\bar g}{\delta\kappa},\rho X+\frac{\beta}{\sqrt s}\xi_2,\bar R \right)\,.
\end{equation}
The one-step transition laws therefore converge to \eqref{eq:limit_kernel} with the same uniformity, by continuity. Lemma~\ref{lem:scalar_markovianity} gives Markovianity of the pair and its stationary initial law converges to the product Gaussian law in \eqref{eq:pair_initial_law}. The limiting kernel maps bounded continuous functions to bounded continuous functions. Together with convergence of the initial laws, locally uniform convergence of the transition kernels and tightness of the stationary marginals, gives convergence over every fixed number of iterations by induction; see the proof of \cite[Chapter~4, Lemma~8.1]{ethier-kurtz:2009}. Finally, passing to the limit in the detailed balance condition for the finite-$d$ chains shows that the limiting kernel is reversible with respect to $\N(0,\sigma_{t^*}^2)\otimes\N(0,s^{-1})$. This completes the proof of part~(ii).

\bibliographystyle{unsrtnat}
\bibliography{bibliography}

@Article{	  ben-arous:1988,
  title		= {{Methods de {L}aplace et de la phase stationnaire sur
		  l'espace de {W}iener}},
  volume	= {25},
  issn		= {0090-9491},
  doi		= {10.1080/17442508808833536},
  number	= {3},
  journal	= {Stochastics},
  author	= {Ben Arous, G\'erard},
  month		= dec,
  year		= {1988},
  pages		= {125--153}
}

@Article{	  beskos-girolami-lan-etal:2017,
  title		= {Geometric {MCMC} for infinite-dimensional inverse
		  problems},
  volume	= {335},
  issn		= {0021-9991},
  doi		= {10.1016/j.jcp.2016.12.041},
  journal	= {Journal of Computational Physics},
  author	= {Beskos, Alexandros and Girolami, Mark and Lan, Shiwei and
		  Farrell, Patrick E. and Stuart, Andrew M.},
  month		= apr,
  year		= {2017},
  pages		= {327--351}
}

@Article{	  beskos-pillai-roberts-etal:2013,
  title		= {Optimal tuning of the hybrid {Monte} {Carlo} algorithm},
  volume	= {19},
  issn		= {1350-7265},
  doi		= {10.3150/12-BEJ414},
  number	= {5A},
  journal	= {Bernoulli},
  author	= {Beskos, Alexandros and Pillai, Natesh and Roberts, Gareth
		  and Sanz-Serna, Jesus-Maria and Stuart, Andrew},
  month		= nov,
  year		= {2013},
  pages		= {1501--1534}
}

@Article{	  byrne-girolami:2013,
  title		= {Geodesic {Monte} {Carlo} on {Embedded} {Manifolds}},
  volume	= {40},
  copyright	= {© 2013 The Authors. Scandinavian Journal of Statistics
		  published by John Wiley \& Sons Ltd on behalf of The Board
		  of the Foundation of the Scandinavian Journal of
		  Statistics.},
  issn		= {1467-9469},
  doi		= {10.1111/sjos.12036},
  number	= {4},
  journal	= {Scandinavian Journal of Statistics},
  author	= {Byrne, Simon and Girolami, Mark},
  year		= {2013},
  pages		= {825--845}
}

@Article{	  cotter-roberts-stuart-etal:2013,
  title		= {{MCMC} {Methods} for {Functions}: {Modifying} {Old}
		  {Algorithms} to {Make} {Them} {Faster}},
  volume	= {28},
  issn		= {0883-4237, 2168-8745},
  shorttitle	= {{MCMC} {Methods} for {Functions}},
  doi		= {10.1214/13-STS421},
  number	= {3},
  journal	= {Statistical Science},
  author	= {Cotter, S. L. and Roberts, G. O. and Stuart, A. M. and
		  White, D.},
  month		= aug,
  year		= {2013},
  pages		= {424--446}
}

@InCollection{	  diaconis-holmes-shahshahani:2013,
  title		= {Sampling from a {Manifold}},
  volume	= {10},
  booktitle	= {Advances in {Modern} {Statistical} {Theory} and
		  {Applications}: {A} {Festschrift} in honor of {Morris} {L}.
		  {Eaton}},
  publisher	= {Institute of Mathematical Statistics},
  author	= {Diaconis, Persi and Holmes, Susan and Shahshahani,
		  Mehrdad},
  month		= jan,
  year		= {2013},
  doi		= {10.1214/12-IMSCOLL1006},
  pages		= {102--126}
}

@Book{		  frenkel-smit:2002,
  address	= {San Diego},
  edition	= {2nd ed},
  series	= {Computational science series},
  title		= {Understanding molecular simulation: from algorithms to
		  applications},
  isbn		= {978-0-08-051998-2 978-0-12-267351-1},
  shorttitle	= {Understanding molecular simulation},
  number	= {1},
  publisher	= {Academic Press},
  author	= {Frenkel, Daan and Smit, Berend},
  year		= {2002}
}

@Article{	  girolami-calderhead:2011,
  title		= {Riemann {Manifold} {Langevin} and {Hamiltonian} {Monte}
		  {Carlo} {Methods}},
  volume	= {73},
  copyright	= {https://academic.oup.com/journals/pages/open\_access/funder\_policies/chorus/standard\_publication\_model},
  issn		= {1369-7412, 1467-9868},
  doi		= {10.1111/j.1467-9868.2010.00765.x},
  number	= {2},
  journal	= {Journal of the Royal Statistical Society Series B:
		  Statistical Methodology},
  author	= {Girolami, Mark and Calderhead, Ben},
  month		= mar,
  year		= {2011},
  pages		= {123--214}
}

@Article{	  habeck-hasenpflug-kodgirwar-etal:2025,
  title		= {Geodesic slice sampling on the sphere},
  author	= {Habeck, Michael and Hasenpflug, Mareike and Kodgirwar,
		  Shantanu and Rudolf, Daniel},
  journal	= {Journal of Machine Learning Research},
  volume	= {26},
  number	= {297},
  pages		= {1--38},
  year		= {2025}
}

@Article{	  hartmann:2008,
  title		= {An {Ergodic} {Sampling} {Scheme} for {Constrained}
		  {Hamiltonian} {Systems} with {Applications} to {Molecular}
		  {Dynamics}},
  volume	= {130},
  issn		= {1572-9613},
  doi		= {10.1007/s10955-007-9470-2},
  number	= {4},
  journal	= {Journal of Statistical Physics},
  author	= {Hartmann, Carsten},
  month		= feb,
  year		= {2008},
  pages		= {687--711}
}

@InProceedings{	  ho-jain-abbeel:2020,
  title		= {Denoising Diffusion Probabilistic Models},
  author	= {Ho, Jonathan and Jain, Ajay and Abbeel, Pieter},
  booktitle	= {Advances in Neural Information Processing Systems},
  volume	= {33},
  pages		= {6840--6851},
  year		= {2020}
}

@Article{	  inahama:2014,
  title		= {Malliavin differentiability of solutions of rough
		  differential equations},
  volume	= {267},
  issn		= {00221236},
  doi		= {10.1016/j.jfa.2014.06.011},
  number	= {5},
  journal	= {Journal of Functional Analysis},
  author	= {Inahama, Yuzuru},
  month		= sep,
  year		= {2014},
  pages		= {1566--1584}
}

@InProceedings{	  kidger-foster-li-etal:2021,
  title		= {Neural SDEs as Infinite-Dimensional GANs},
  author	= {Kidger, Patrick and Foster, James and Li, Xuechen and
		  Lyons, Terry J},
  booktitle	= {Proceedings of the 38th International Conference on
		  Machine Learning},
  pages		= {5453--5463},
  year		= {2021},
  editor	= {Meila, Marina and Zhang, Tong},
  volume	= {139},
  series	= {Proceedings of Machine Learning Research},
  month		= {18--24 Jul},
  publisher	= {PMLR}
}

@Article{	  klose:2026,
  title		= {The constant coefficient in precise {Laplace} asymptotics
		  for {gPAM}},
  volume	= {62},
  issn		= {0246-0203},
  doi		= {10.1214/24-AIHP1548},
  number	= {2},
  journal	= {Annales de l'Institut Henri Poincaré, Probabilités et
		  Statistiques},
  author	= {Klose, Tom},
  month		= may,
  year		= {2026}
}

@Article{	  lelievre-rousset-stoltz:2012,
  title		= {Langevin dynamics with constraints and computation of free
		  energy differences},
  volume	= {81},
  issn		= {0025-5718, 1088-6842},
  doi		= {10.1090/S0025-5718-2012-02594-4},
  number	= {280},
  journal	= {Mathematics of Computation},
  author	= {Leli\`evre, Tony and Rousset, Mathias and Stoltz, Gabriel},
  month		= oct,
  year		= {2012},
  pages		= {2071--2125}
}

@Article{	  lelievre-rousset-stoltz:2019,
  title		= {Hybrid {Monte} {Carlo} methods for sampling probability
		  measures on submanifolds},
  volume	= {143},
  issn		= {0945-3245},
  doi		= {10.1007/s00211-019-01056-4},
  number	= {2},
  journal	= {Numerische Mathematik},
  author	= {Leli\`evre, Tony and Rousset, Mathias and Stoltz, Gabriel},
  month		= oct,
  year		= {2019},
  pages		= {379--421}
}

@Article{	  lie-rudolf-sprungk-etal:2023,
  title		= {Dimension-independent {Markov} chain {Monte} {Carlo} on
		  the sphere},
  volume	= {50},
  copyright	= {© 2023 The Authors. Scandinavian Journal of Statistics
		  published by John Wiley \& Sons Ltd on behalf of The Board
		  of the Foundation of the Scandinavian Journal of
		  Statistics.},
  issn		= {1467-9469},
  doi		= {10.1111/sjos.12653},
  number	= {4},
  journal	= {Scandinavian Journal of Statistics},
  author	= {Lie, Han Cheng and Rudolf, Daniel and Sprungk, Bj\"orn and
		  Sullivan, T. J.},
  year		= {2023},
  pages		= {1818--1858}
}

@InProceedings{	  lipman-chen-ben-hamu-etal:2023,
  title		= {Flow Matching for Generative Modeling},
  author	= {Yaron Lipman and Ricky T. Q. Chen and Heli Ben-Hamu and
		  Maximilian Nickel and Matthew Le},
  booktitle	= {The Eleventh International Conference on Learning
		  Representations },
  year		= {2023}
}

@Book{		  mardia-jupp:2000,
  address	= {Chichester ; New York},
  series	= {Wiley series in probability and statistics},
  title		= {Directional statistics},
  isbn		= {978-0-471-95333-3},
  publisher	= {J. Wiley},
  author	= {Mardia, K. V. and Jupp, Peter E.},
  year		= {2000}
}

@Book{		  nualart:2006,
  address	= {Berlin ; New York},
  edition	= {2nd ed},
  series	= {Probability and its applications},
  title		= {The {Malliavin} calculus and related topics},
  isbn		= {978-3-540-28328-7},
  publisher	= {Springer},
  author	= {Nualart, David},
  year		= {2006}
}

@Article{	  roberts-gelman-gilks:1997,
  title		= {Weak convergence and optimal scaling of random walk
		  {Metropolis} algorithms},
  volume	= {7},
  issn		= {1050-5164},
  doi		= {10.1214/aoap/1034625254},
  number	= {1},
  journal	= {The Annals of Applied Probability},
  author	= {Roberts, G. O. and Gelman, A. and Gilks, W. R.},
  month		= feb,
  year		= {1997}
}

@Article{	  roberts-rosenthal:1998,
  title		= {Optimal {Scaling} of {Discrete} {Approximations} to
		  {Langevin} {Diffusions}},
  volume	= {60},
  copyright	= {https://academic.oup.com/journals/pages/open\_access/funder\_policies/chorus/standard\_publication\_model},
  issn		= {1369-7412, 1467-9868},
  doi		= {10.1111/1467-9868.00123},
  number	= {1},
  journal	= {Journal of the Royal Statistical Society Series B:
		  Statistical Methodology},
  author	= {Roberts, Gareth O. and Rosenthal, Jeffrey S.},
  month		= jan,
  year		= {1998},
  pages		= {255--268}
}

@Article{	  schorlepp-grafke:2026,
  title		= {Scalability of the {Second}-{Order} {Reliability} {Method}
		  for {Stochastic} {Differential} {Equations} with
		  {Multiplicative} {Noise}},
  volume	= {14},
  issn		= {2166-2525},
  doi		= {10.1137/25M1743867},
  number	= {3},
  journal	= {SIAM/ASA Journal on Uncertainty Quantification},
  author	= {Schorlepp, Timo and Grafke, Tobias},
  month		= sep,
  year		= {2026},
  pages		= {920--973}
}

@Article{	  schorlepp-tong-grafke-etal:2023,
  author	= {Schorlepp, Timo and Tong, Shanyin and Grafke, Tobias and
		  Stadler, Georg},
  title		= {{Scalable methods for computing sharp extreme event
		  probabilities in infinite-dimensional stochastic systems}},
  journal	= {Stat. Comput.},
  year		= {2023},
  month		= {Oct},
  day		= {13},
  volume	= {33},
  number	= {6},
  pages		= {137},
  issn		= {1573-1375},
  doi		= {10.1007/s11222-023-10307-2}
}

@InProceedings{	  song-sohl-dickstein-kingma-etal:2021,
  title		= {Score-Based Generative Modeling through Stochastic
		  Differential Equations},
  author	= {Song, Yang and Sohl-Dickstein, Jascha and Kingma, Diederik
		  P. and Kumar, Abhishek and Ermon, Stefano and Poole, Ben},
  booktitle	= {International Conference on Learning Representations},
  year		= {2021}
}

@Article{	  stuart:2010,
  title		= {{Inverse problems: a Bayesian perspective}},
  author	= {Stuart, Andrew M},
  journal	= {Acta Numer.},
  volume	= {19},
  pages		= {451--559},
  year		= {2010},
  publisher	= {Cambridge University Press},
  doi		= {10.1017/S0962492910000061 }
}

@Article{	  wood:1994,
  title		= {Simulation of the von mises fisher distribution},
  volume	= {23},
  issn		= {0361-0918},
  doi		= {10.1080/03610919408813161},
  number	= {1},
  journal	= {Communications in Statistics - Simulation and
		  Computation},
  author	= {Wood, Andrew T.A},
  month		= jan,
  year		= {1994},
  pages		= {157--164}
}

@Article{	  zappa-holmes-cerfon-goodman:2018,
  title		= {Monte {Carlo} on {Manifolds}: {Sampling} {Densities} and
		  {Integrating} {Functions}},
  volume	= {71},
  copyright	= {© 2018 Wiley Periodicals, Inc.},
  issn		= {1097-0312},
  shorttitle	= {Monte {Carlo} on {Manifolds}},
  doi		= {10.1002/cpa.21783},
  number	= {12},
  journal	= {Communications on Pure and Applied Mathematics},
  author	= {Zappa, Emilio and Holmes-Cerfon, Miranda and Goodman,
		  Jonathan},
  year		= {2018},
  pages		= {2609--2647}
}

@book{ethier-kurtz:2009,
  title={Markov processes: characterization and convergence},
  author={Ethier, Stewart N and Kurtz, Thomas G},
  year={2009},
  publisher={John Wiley \& Sons}
}

@article{christensen-roberts-rosenthal:2005,
  title={Scaling limits for the transient phase of local Metropolis--Hastings algorithms},
  author={Christensen, Ole F and Roberts, Gareth O and Rosenthal, Jeffrey S},
  journal={Journal of the Royal Statistical Society Series B: Statistical Methodology},
  volume={67},
  number={2},
  pages={253--268},
  year={2005},
  publisher={Oxford University Press}
}

@article{jourdain-lelievre-miasojedow:2014,
 ISSN = {13507265},
 URL = {http://www.jstor.org/stable/43590428},
 author = {BENJAMIN JOURDAIN and TONY LELIÈVRE and BŁAŻEJ MIASOJEDOW},
 journal = {Bernoulli},
 number = {4},
 pages = {1930--1978},
 publisher = {[Bernoulli Society for Mathematical Statistics and Probability, International Statistical Institute (ISI)]},
 title = {Optimal scaling for the transient phase of Metropolis Hastings algorithms: The longtime behavior},
 urldate = {2026-09-30},
 volume = {20},
 year = {2014}
}

\end{document}